\author{Christian d'Elb\'{e}e, Yatir Halevi, and Will Johnson}
\title{The classification of dp-minimal integral domains}
\def\Ind#1#2{#1\setbox0=\hbox{$#1x$}\kern\wd0\hbox to 0pt{\hss$#1\mid$\hss}
	\lower.9\ht0\hbox to 0pt{\hss$#1\smile$\hss}\kern\wd0}
\def\notind#1#2{#1\setbox0=\hbox{$#1x$}\kern\wd0
	\hbox to 0pt{\mathchardef\nn=12854\hss$#1\nn$\kern1.4\wd0\hss}
	\hbox to 0pt{\hss$#1\mid$\hss}\lower.9\ht0 \hbox to 0pt{\hss$#1\smile$\hss}\kern\wd0}
\newcommand{\can}{\mathrm{can}}
\newcommand{\alg}{\mathrm{alg}}
\newcommand{\ACF}{\mathrm{ACF}}
\newcommand{\RCF}{\mathrm{RCF}}
\newcommand{\ACVF}{\operatorname{ACVF}}
\newcommand{\Frac}{\operatorname{Frac}}
\newcommand{\characteristic}{\operatorname{char}}
\newcommand{\res}{\operatorname{res}}
\newcommand{\rv}{\operatorname{rv}}
\newcommand{\Aut}{\operatorname{Aut}}
\newcommand{\tp}{\operatorname{tp}}
\newcommand{\dpr}{\operatorname{dp-rk}}
\def\seq{\subseteq}
\newcommand{\set}[1]{\left\{ {#1} \right\}}
\newtheorem{theorem}{Theorem}[section]
\newtheorem{lemma}[theorem]{Lemma}
\newtheorem{theorem-not}[theorem]{Theorem (or not)}
\newtheorem{corollary}[theorem]{Corollary}
\newtheorem{fact}[theorem]{Fact}
\newtheorem{fact-q}[theorem]{Fact(?)}
\newtheorem{proposition}[theorem]{Proposition}
\newtheorem*{theorem-star}{Theorem}
\newtheorem*{proposition-star}{Proposition}
\newtheorem*{lemma-star}{Theorem}
\newtheorem*{conjecture-star}{Conjecture}
\newtheorem*{fact-star}{Fact}
\theoremstyle{definition}
\newtheorem{definition}[theorem]{Definition}
\newtheorem{informal-definition}[theorem]{Informal Definition}
\newtheorem{theorem-definition}[theorem]{Theorem-Definition}
\newtheorem{example}[theorem]{Example}
\newtheorem{remark}[theorem]{Remark}
\newtheorem{claim}[theorem]{Claim} 
\newtheorem*{observation-star}{Observation}
\newtheorem*{acknowledgment}{Acknowledgments}
\newcommand{\Qq}{\mathbb{Q}}
\newcommand{\Rr}{\mathbb{R}}
\newcommand{\Cc}{\mathbb{C}}
\newcommand{\Zz}{\mathbb{Z}}
\newcommand{\Nn}{\mathbb{N}}
\newcommand{\Mm}{\mathbb{M}}
\newcommand{\Ff}{\mathbb{F}}
\newcommand{\Oo}{\mathcal{O}}
\newcommand{\mm}{\mathfrak{m}}
\newcommand{\pp}{\mathfrak{p}}
\newcommand{\qq}{\mathfrak{q}}
\newenvironment{claimproof}[1][\proofname]
{
                 \proof[#1]
                 
}
{
                 \endproof
}
\let\phi\varphi
\def\yatir{\textcolor{magenta}}
\begin{document}

\maketitle

\begin{abstract}
  We classify dp-minimal integral domains, building off the existing
  classification of dp-minimal fields and dp-minimal valuation rings.
  We show that if $R$ is a dp-minimal integral domain, then $R$ is a
  field or a valuation ring or arises from the following
  construction: there is a dp-minimal valuation overring $\Oo \supseteq R$, a proper ideal $I$ in $\Oo$, and a finite subring $R_0 \subseteq \Oo/I$ such that $R$
  is the preimage of $R_0$ in $\Oo$. 
\end{abstract}

\section{Introduction}
A dp-minimal domain is an integral domain whose first order theory is of dp-rank $1$; see Section \ref{S:pre} for the definition.  In \cite{dp-min-domains}, the first two authors studied under which algebraic conditions dp-minimal domains are in fact valuation rings. After noting that any such ring $R$ must be local, the answer was that $R$ is a valuation ring if and only if its residue field is infinite or its residue field is finite and its maximal ideal is principal. The third author gave a complete classification of dp-minimal fields and valuation rings in \cite{johnson21classificationdpmin} (see Section \ref{ss:dpvalfields} for the statements of these results), and a classification of dp-minimal Noetherian domains in \cite{johnson2023}.

In this paper we  continue the arguments of \cite{dp-min-domains,johnson21classificationdpmin,johnson2023},
leading to a classification of dp-minimal integral domains.

\begin{theorem-star}
    Let $R$ be an integral domain.  Then $R$ is dp-minimal if and only if one of the following holds:
    \begin{enumerate}
    \item $R$ is a dp-minimal field.
    \item $R$ is a dp-minimal valuation ring.
    \item There is a valuation subring $\Oo$ of $K = \Frac(R)$, a proper ideal $I \lhd \Oo$, and a finite subring $R_0$ of $\Oo/I$ such that $R$ is the preimage of $R_0$ under the quotient map $\Oo \to \Oo/I$, and the valuation ring $(\Oo,+,\cdot)$ is dp-minimal.
    \end{enumerate}
    Moreover, in (3) the ring $\Oo$, ideal $I$, and subring $R_0$ can be chosen to be definable in $R$.
\end{theorem-star}
See Facts~\ref{dpm-fields-class} and \ref{dpmvr} for the classification of dp-minimal fields and valuation rings.

The ``if'' direction of this theorem---rings satisfying (3) are dp-minimal---is the relatively easy Proposition \ref{prop-construction}.  The ``only if'' direction---any dp-minimal domain arises from one of the three constructions---is the much harder Theorem~\ref{main-thm}.



Here are two typical examples of the construction in (3):
\begin{example} \label{ex-1}
  Let $K = \Qq_3(i)$, where $i = \sqrt{-1}$.  Let $\Oo$ be the natural
  valuation ring on $K$, namely $\Oo = \Zz_3[i]$.  Then $\Oo/27\Oo$ is
  a finite ring extending $\Zz/27\Zz$.  Taking $R_0 = \Zz/27\Zz$, we
  get a dp-minimal integral domain $R \subseteq \Oo$, consisting of
  those $x \in \Oo$ which are congruent mod 27 to an element of
  $\{0,1,2,\ldots,26\}$.  One can describe $R$ more directly as $R =
  \Zz + 27\Oo$.  Note that $R$ is not a valuation ring.
\end{example}

\begin{example} \label{ex-2}
  Let $K$ be the field of Hahn series $\Ff_3^{\alg}((t^\Qq))$, a model
  of ACVF$_{3,3}$.  Let $\Oo$ be the natural valuation ring on $K$,
  and let $I$ be the (non-definable) ideal $I = \{x \in \Oo : v(x) >
  \pi\}$.  There is a finite subring $R_0 \subseteq \Oo/I$ of size
  $3^7$ given by
  \begin{equation*}
    R_0 = \{a + bt + ct^2 + dt^3 + I : a \in \Ff_3, ~ b,c,d \in
    \Ff_9\}.
  \end{equation*}
  This yields a dp-minimal integral domain $R \subseteq \Oo$.
  Explicitly, $R$ is the set of $x \in \Oo$ such that there are $a \in
  \Ff_3$ and $b,c,d \in \Ff_9$ with $v(x-(a+bt+ct^2+dt^3)) > \pi$.  Again, $R$ is not a valuation ring.
\end{example}

During the proof of our main theorem, we prove (or observe) the following results which might be interesting in their own right:

\begin{proposition-star}[Proposition \ref{P:vee valuation ring is ext def}]
If $\Oo$ is a $\vee$-definable valuation subring of some sufficiently saturated field, then $\Oo$ is externally definable.
\end{proposition-star}
\begin{proposition-star}[Proposition \ref{P:ext-val-3}]
  Let $(K,+,\cdot,A)$ be a dp-minimal expansion of a field $K$ by an
  infinite, proper additive subgroup $(A,+) \subseteq (K,+)$.  Then
  there is a non-trivial definable valuation ring on $K$.
\end{proposition-star}
\begin{observation-star}[Fact~\ref{profinite-r/r00}]
    If $R$ is an NIP commutative ring, then $R^{00} = R^0$.
\end{observation-star}
We also make some preliminary observations about more general dp-minimal commutative rings (not domains), showing that each is the product of a finite ring and a dp-minimal henselian local ring, and in dp-minimal local rings, the prime ideals are linearly ordered (Proposition~\ref{P:non-domains}).

\subsection{Sketch of the proof}
In order to prove our main theorem, there are two things to check.  First, we must show that the construction in part (3) of the theorem really does give dp-minimal integral domains.  The proof is relatively easy, using Shelah expansions, and is given in Section~\ref{S:converse}.

The much more difficult direction is to show that if $R$ is a dp-minimal integral domain other than a field or valuation ring, then $R$ arises via the construction in part (3).  The proof of this fact occupies Sections~\ref{S:basic}--\ref{S:mainproof}.  We call such rings $R$ \emph{exceptional dp-minimal domains}.  The fraction field $K = \Frac(R)$ turns out to be a dp-minimal field.  Using the classification of dp-minimal fields, we divide into three cases: $K$ is either \emph{ACVF-like}, \emph{$p$CF-like}, or \emph{RCVF-like} (see Section~\ref{sec:three-kinds}).

Corollary~5.6 in \cite{dp-min-domains} essentially says that $R$ must be comparable with any definable valuation ring on $K$ (see Fact~\ref{F:dp-min R comparable to O}).  Using this, we can quickly rule out the RCVF-like case, at least when $R$ is exceptional (Proposition~\ref{P:acf-pcf-like cover}).

Proposition~3.14 in \cite{dp-min-domains} gives a dichotomy for the behavior of $\sqrt{\mm^{00}}$, the radical of the 00-connected component of the maximal ideal $\mm$ of $R$---it is either $\mm$ itself or the unique second-largest prime ideal in $R$ (see Proposition~\ref{Proposition3.14delbeehalevi} and Lemma~\ref{lem-two-cases}).  We show that these two cases correspond exactly to the ACVF-like case and the $p$CF-like case, respectively (Proposition~\ref{prop-alignment}).  Combining this with \cite[Corollary~5.6]{dp-min-domains} again, we finish the $p$CF-like case in Proposition~\ref{P:construction holds}.

The ACVF-like case is the hardest to deal with.  Assuming that $\Frac(R)$ is ACVF-like, say that $R$ is a \emph{good domain} if $\mm^{00}$ is definable, and a \emph{rogue domain} otherwise (Definition~\ref{def-rogue}).  Good domains are almost trivial to deal with (see the proof of Theorem~\ref{main-thm}); all the work goes into showing that \emph{rogue domains do not exist} (Proposition~\ref{last-prop}).  In Section~\ref{ss:rogue}, we analyze rogue domains, proving enough facts about their structure to get a contradiction.  The proof is technical, but involves showing that the set $\Oo = \mm^{00} : \mm^{00} = \{x \in K : x \mm^{00} \subseteq \mm^{00}\}$ is the coarsest externally definable valuation ring on $K$, and studying the interactions between $R$ and $\Oo$.

The proofs in Section~\ref{S:mainproof} depend on a number of technical tools, which we collect in Section~\ref{S:basic}:
\begin{itemize}
    \item In Subsection~\ref{ss:basicbasic} we review some basic facts about NIP and dp-minimal domains from \cite{dp-min-domains,nip-fp-alg}.
    \item In Subsection~\ref{ss:00-connected} we show certain prime ideals in $R$ are 00-connected.
    \item In Subsection~\ref{ss:ext-def} we show that $\vee$-definable valuation rings are externally definable, and use this to show that $R^{00}$ is externally definable when $R$ is a dp-minimal domain.
    \item In Subsection~\ref{ss:dichotomy} we strengthen and improve the $\sqrt{\mm^{00}}$ dichotomy from \cite[Proposition~3.14]{dp-min-domains}, leveraging the earlier results on 00-connectedness and external definability.
    \item In Subsection~\ref{ss:r-r00} we use tools from topological ring theory to observe that $R^{00} = R^0$ and analyze the structure of $R/R^{00}$ in certain cases using the classification of locally compact fields.
    \item In Subsection~\ref{ss:additive-subgroup} we show that any dp-minimal domain $R$ with an infinite definable additive proper subgroup also admits an infinite definable proper subring (Proposition~\ref{P:A-bdd}).  We subsequently strengthen this to get an externally definable non-trivial valuation ring (Proposition~\ref{P:ext-val-2}) and ultimately a definable non-trivial valuation ring (Proposition~\ref{P:ext-val-3}).
\end{itemize}

\begin{acknowledgment}
CdE is fully supported by the UKRI Horizon Europe Guarantee Scheme, grant No.\@ EP/Y027833/1. YH was supported by ISF grant No. 290/19. WJ was supported
  by the National Natural Science Foundation of China (Grant No.\@
  12101131) and the Ministry of Education of China (Grant No.\@
  22JJD110002). 
\end{acknowledgment}

\section{Preliminaries}\label{S:pre}
\subsection{Notation} \label{ss:notation}
We employ fairly standard model theoretic notation and conventions, see \cite{TZ12,Sim15}.

We briefly review the definition of dp-rank and dp-minimality.  If
$\Sigma(x)$ is a partial type and $\kappa$ is a cardinal, an
\emph{ict-pattern} of depth $\kappa$ in $\Sigma(x)$ is an array of
formulas $(\phi_\alpha(x,b_{\alpha,i}) : \alpha < \kappa, ~ i <
\omega)$ over an elementary extension, such that for every function
$\eta : \kappa \to \omega$, the type
\begin{equation*}
  \Sigma(x) \cup \{\phi_\alpha(x,b_{\alpha,i}) : i = \eta(\alpha)\}
  \cup \{\neg \phi_\alpha(x,b_{\alpha,i}) : i \ne \eta(\alpha)\}
\end{equation*}
is consistent.  The \emph{dp-rank of $\Sigma(x)$}, written
$\dpr(\Sigma(x))$, is the supremum of cardinals $\kappa$ such that
there is an ict-pattern of depth $\kappa$ in $\Sigma(x)$.  (Dp-rank
can also be defined using indiscernible sequences; see \cite[Section
  4.2]{Sim15}.)  The dp-rank of a definable set $D$, written
$\dpr(D)$, is the dp-rank of the formula defining $D$.  A one-sorted
structure $M$ is \emph{dp-minimal} if $\dpr(M) = 1$.  In particular,
$\dpr(M) > 0$, so $M$ must be infinite, with our definition of ``dp-minimal''.  A
one-sorted theory is dp-minimal if its models are dp-minimal.

By a ring we mean a (possibly trivial) commutative ring
with identity.  An ideal in $R$ can be the improper ideal $R$.  A
``maximal ideal'' means a maximal proper ideal.  A local ring is a
ring with a unique maximal ideal, possibly a field.  Valuations can be
trivial, so valuation rings can be fields.

If $(R,+,\cdot,\dots)$ is an expansion of an integral domain then the structures $(R,+,\cdot, \dots)$ and $(\Frac (R),R,+,\cdot,\dots)$ are obviously interdefinable so for example saturation and definability pass from one to the other. We will mostly not distinguish between the two structures.

We assume the reader is familiar with valued fields. Given a valued field $(K,v)$ we will usually denote its value group by $\Gamma$ and its residue field by $k$. Any cut in the value group $\Gamma$ is of the form $\{x\in \Gamma: x>\gamma\}$ or $\{x\in \Gamma: x\geq \gamma\}$ for some $\gamma\in \Gamma'\succ \Gamma$.  Thus any ideal of the valuation ring is of the form $\{x\in K: v(x)\square \gamma\}$, where $\square\in \{>,\geq\}$. An ideal $I$ is principal if and only if $I=\{x\in K:v(x)\geq \gamma\}$ for some $\gamma\in \Gamma$.

We do not assume that we are working in a highly saturated monster model, unless stated otherwise.




\subsection{Dp-minimal valuation rings and fields}\label{ss:dpvalfields}
We recall the classification of dp-minimal valuation rings and fields given by the third author (\cite{johnson21classificationdpmin}).
First recall the following.
\begin{fact}[{\cite[Proposition~5.1]{JSW}}]
  \label{dp-minimal-oag-classification}
  Let $(\Gamma,\le,+)$ be a non-trivial ordered abelian group.  Then $\Gamma$ is
  dp-minimal if and only if $|\Gamma/n\Gamma|$ is finite for all $n >
  0$.
\end{fact}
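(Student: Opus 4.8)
The statement is an equivalence, and the two directions want quite different treatments. For ``$\Gamma$ dp-minimal $\Rightarrow |\Gamma/n\Gamma|$ finite'' I would argue the contrapositive. Since $|\Gamma/n\Gamma| = \infty$ forces $|\Gamma/p\Gamma| = \infty$ for some prime $p \mid n$, I may assume $|\Gamma/p\Gamma| = \infty$; pass to a monster $\Gamma^* \succeq \Gamma$ and build an ict-pattern of depth $2$ using the two families of formulas $\phi_0(x;y_0,y_1) := (y_0 \le x \le y_1)$ (from the order) and $\phi_1(x;y) := \exists z\,(x - y = pz)$ (congruence mod $p$). The parameters I want are $\omega$ pairwise disjoint intervals $[a_j,a_j']$ and representatives $x_0,x_1,\dots$ of pairwise distinct cosets of $p\Gamma^*$, arranged so that every interval $[a_j,a_j']$ contains a representative of every coset $x_k + p\Gamma^*$. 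Then for each $\eta:2\to\omega$ the type from the definition of ict-pattern corresponding to $\eta$ is consistent: it asks for an $x$ in the $\eta(0)$-th interval and no other, congruent mod $p$ to $x_{\eta(1)}$ and to no other $x_k$, and such an $x$ exists by the choice of parameters, the other intervals being avoided by disjointness and the other cosets automatically.

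Producing this configuration is where $|\Gamma/p\Gamma| = \infty$ enters, and it is short. First, for every $N$ there is a \emph{bounded} interval of $\Gamma$ meeting more than $N$ cosets of $p\Gamma$: just take $N+1$ elements in distinct cosets and the interval they span. By compactness, in a sufficiently saturated $\Gamma^*$ there is a bounded interval $[a,a']$ containing infinitely many pairwise incongruent elements $x_0,x_1,\dots$. Second, the convex hull of $p\Gamma$ in $\Gamma$ is all of $\Gamma$ — the quotient of $\Gamma$ by it is an ordered abelian group killed by $p$, hence trivial — so $p\Gamma^*$ has arbitrarily large elements; translating the cluster $\{x_k\}$ by $\omega$ suitably spread-out elements of $p\Gamma^*$ yields $\omega$ copies of it inside pairwise disjoint intervals. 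Those intervals and the original $x_k$ are the parameters, and one checks the ict-pattern conditions directly; hence $\dpr(\Gamma) \ge 2$.

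For the converse, assume $|\Gamma/n\Gamma|$ is finite for all $n$ and show $\dpr(\Gamma)\le 1$. Here I would invoke quantifier elimination for ordered abelian groups (Gurevich--Schmitt, or the Cluckers--Halupczok presentation) to obtain a workable description of $1$-types over an arbitrary model $M \preceq \Gamma^*$: such a type should be pinned down by (i) the cut of $x$ in the linear order of $M$, and (ii) for each $n$, a \emph{bounded} amount of ``congruence data'' recording the coset of $x$ modulo $n$ relative to $M$ at each relevant archimedean level, the bound coming from finiteness of $\Gamma/n\Gamma$. With such a normal form in hand, one runs the standard argument that Presburger-like theories are dp-minimal: given mutually indiscernible sequences $I_1,I_2$ and a singleton $c$, any failure of $I_\ell$ to remain indiscernible over $c$ together with the other sequence is caused either by the order or by one of the congruences; but linear orders are dp-minimal (so $c$ can shift the cut of at most one of $I_1,I_2$), and for each fixed modulus the congruence data is finite (so it cannot be used to split an indiscernible sequence), whence $I_1$ and $I_2$ cannot both fail and no ict-pattern of depth $2$ survives.

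I expect the real work — and the main obstacle — to lie in the converse, specifically in extracting from the ordered-abelian-group quantifier elimination a normal form for $1$-types over \emph{arbitrary} models clean enough that the dp-rank bookkeeping in step (ii) genuinely goes through; this is exactly where one uses finiteness of every $\Gamma/n\Gamma$ rather than merely that $\Gamma$ is an ordered abelian group, which by Gurevich--Schmitt only yields NIP. A less hands-on alternative would be to build on the known dp-minimality of \emph{regular} ordered abelian groups, obtained from Robinson--Zakon's classification of their complete theories, and reduce the general finite-index case along the chain of convex subgroups — using that for a convex $C$ one has $C/nC \hookrightarrow \Gamma/n\Gamma$ (by purity of $C$) and $(\Gamma/C)/n(\Gamma/C)$ a quotient of $\Gamma/n\Gamma$, so each piece again satisfies the hypothesis — though making that reduction precise still requires care.
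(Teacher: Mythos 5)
A preliminary remark: the paper does not prove this statement at all. It is quoted as a Fact, with the proof delegated to \cite[Proposition~5.1]{JSW}, so there is no in-paper argument to compare yours against; what follows assesses your proposal on its own. Your left-to-right direction is correct and essentially complete: the reduction to a prime $p$ dividing $n$, the saturation argument producing a bounded interval containing infinitely many pairwise $p$-incongruent elements, the observation that the convex hull of $p\Gamma$ is all of $\Gamma$ (torsion-freeness), hence $p\Gamma^*$ is cofinal and translating the cluster by multiples of one large element of $p\Gamma^*$ gives disjoint intervals carrying all the cosets, and the verification of the depth-$2$ ict-pattern all go through.

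The genuine gap is the converse, and you flag it yourself. The assertion that a $1$-type over an arbitrary model is determined by ``the cut plus a bounded amount of congruence data at each relevant archimedean level'' is precisely the statement that needs proof, not an input one can simply invoke: the Gurevich--Schmitt/Cluckers--Halupczok elimination is not relative to order and congruences alone but involves auxiliary sorts of definable convex subgroups (the spines), and the substantive step is to show that finiteness of every $\Gamma/n\Gamma$ collapses these auxiliary data to finitely many possibilities (the bounded-regular-rank situation), after which formulas in one free variable reduce to boolean combinations of intervals and coset conditions $x \in t(y) + n\Gamma + \Delta$. Granting such a normal form, your bookkeeping is fine --- if $\Gamma/n\Gamma$ is finite then all terms of an indiscernible sequence lie in a single coset of $n\Gamma$ (finitely many classes of a definable equivalence relation), so congruence formulas cannot alternate along the sequence, and the order can move only one cut --- but without the normal form this is a plan rather than a proof. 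The alternative route through Robinson--Zakon and regular groups is likewise only gestured at: knowing that each convex piece $C$ and quotient $\Gamma/C$ inherits the finite-quotient hypothesis does not by itself yield dp-minimality of $\Gamma$, and you do not supply the transfer argument. So as written the proposal establishes only the easy implication; the converse, which is where all the content of \cite[Proposition~5.1]{JSW} lies, remains an acknowledged sketch.
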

For example, $\Zz$ and $\Rr$ are dp-minimal as ordered abelian groups.

\begin{fact}
  \label{dpm-fields-class}
  An infinite field $(K,+,\cdot)$ is dp-minimal if and only if there
  is a henselian defectless valuation ring $\Oo \subseteq K$ with
  maximal ideal $\mm$ such that
  \begin{enumerate}
  \item The value group $\Gamma := K^\times/\Oo^\times$ is trivial or dp-minimal as an
    ordered abelian group.
  \item The residue field $k := \Oo/\mm$ is algebraically closed, real
    closed, or $p$-adically closed for some prime $p$.
  \item If the residue field $k$ is algebraically closed of
    characteristic $p > 0$, then the interval $[-v(p),v(p)] \subseteq
    \Gamma$ is $p$-divisible, where $v(p) = +\infty$ when
    $\characteristic(K) = p$.
  \end{enumerate}
\end{fact}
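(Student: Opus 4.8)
This is the main theorem of \cite{johnson21classificationdpmin}; here is the route I would follow.

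For the ``if'' direction we must show that the \emph{pure} field $(K,+,\cdot)$ is dp-minimal whenever it admits a henselian defectless valuation $v$ satisfying (1)--(3). A first point is that $v$ need not be definable in $(K,+,\cdot)$ at all: when the residue field $k$ is infinite and $\Gamma$ is non-trivial, the \emph{valued} field $(K,v)$ is itself not dp-minimal --- its residue field and value group are orthogonal and each of dp-rank $1$ --- so the reduct bound $\dpr(K,+,\cdot)\le\dpr(K,v)$ only gives $\le 2$ and is useless. The case $\Gamma$ trivial is immediate: $K=k$ is algebraically closed (strongly minimal), real closed (o-minimal), or $p$-adically closed (dp-minimal by known results). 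For $\Gamma$ non-trivial I would split on $k$. If $k$ is real closed or $p$-adically closed, there is a \emph{definable} coarsening of $v$ whose residue field and value group are small (finite, resp.\ RCF/$p$CF residue), and a relative quantifier elimination for henselian valued fields reduces $\dpr(K,+,\cdot)$ to the dp-rank of an interpreted structure, which one computes to be $1$ using Fact~\ref{dp-minimal-oag-classification} for the value group. If $k$ is algebraically closed, the field is ``ACVF-like'' and one instead compares $(K,+,\cdot)$ with the algebraically closed valued field $(K^{\alg},v^{\alg})$, transferring the analysis of definable sets; condition (3) is precisely what prevents a depth-$2$ ict-pattern assembled from Artin--Schreier or Kummer data.

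The ``only if'' direction is where the real work is. Let $K$ be an infinite dp-minimal field which is neither algebraically closed nor real closed. The decisive step --- and, I expect, the main obstacle in the whole argument --- is to equip $K$ with a \emph{definable $V$-topology}, i.e.\ a definable basis for a non-discrete field topology; this is the technical heart of \cite{johnson21classificationdpmin}. Granting it, a $V$-topology on a field is induced either by an archimedean absolute value --- which would force $K$, up to elementary equivalence, into $\Rr$ or $\Cc$ with the order topology, impossible since $K$ is not real or algebraically closed --- or by a valuation. Taking the coarsest valuation inducing the topology yields a valuation ring $\Oo\subseteq K$, definable in the non-ACVF-like cases and otherwise at least $\bigvee$-definable or externally definable, which still suffices to interpret its residue field and value group.

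It then remains to read off (1)--(3), henselianity and defectlessness. The residue field and value group are (externally) interpretable, hence dp-minimal or finite/trivial, and Fact~\ref{dp-minimal-oag-classification} gives (1). For (2), a dp-minimal field is again of the kind under analysis, so I would recurse on the residue field --- it is finite, or ACF/RCF/$p$CF, or carries a further non-trivial valuation --- and the recursion terminates because the ramification and defect data strictly decrease. Henselianity follows because dp-minimal fields are $t$-henselian and $\Oo$ is compatible with the topology. Finally, dp-minimality forces defectlessness and condition (3): a proper defect extension, or a non-$p$-divisible element of $[-v(p),v(p)]$ when $k$ is algebraically closed of characteristic $p$, in each case lets one isolate an independent ``residual'' coordinate and ``value-group'' coordinate and build an ict-pattern of depth $2$, contradicting $\dpr(K)=1$. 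Once the definable $V$-topology is available, the remaining steps are delicate but essentially routine valuation theory, all organized around the imperative of killing depth-$2$ ict-patterns.
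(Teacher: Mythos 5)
Your ``if'' direction rests on a false premise. You dismiss the valued-field route on the grounds that when $k$ is infinite and $\Gamma$ is non-trivial the valued field $(K,v)$ is not dp-minimal ``since residue field and value group are orthogonal and each of dp-rank $1$''. Dp-rank does not add up over the residue field and value group in this way: under exactly the hypotheses (1)--(3) plus henselianity and defectlessness, the valued field $(K,\Oo)$ \emph{is} dp-minimal --- this is precisely Fact~\ref{dpmvr} (Theorems~1.5, 1.6 of \cite{johnson21classificationdpmin}); concretely, $(\Cc((t)),v)$ and models of ACVF are dp-minimal valued fields. The paper's proof of the ``if'' direction is exactly the one-liner you rejected: $(K,\Oo)$ is dp-minimal by Fact~\ref{dpmvr} (or $K=k$ is ACF/RCF/$p$CF when $v$ is trivial), hence the pure-field reduct is dp-minimal. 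Your proposed replacement --- relative quantifier elimination in the RCF/$p$CF residue case and a comparison with $(K^{\alg},v^{\alg})$ with ``condition (3) preventing depth-$2$ ict-patterns'' in the ACF case --- is not carried out and would amount to re-proving Fact~\ref{dpmvr} from scratch.

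For the ``only if'' direction you essentially re-sketch the saturated-case argument of \cite{johnson21classificationdpmin} (definable V-topology, coarsest compatible valuation, recursion on the residue field), but this misses the actual content of the statement as used here: Theorem~7.3 of \cite{johnson21classificationdpmin} is only proved for sufficiently saturated $K$, and the whole point of the paper's Appendix~\ref{appendix} is to remove that hypothesis. Your sketch does not explain how to descend the construction to an arbitrary unsaturated model (an externally definable or $\vee$-definable valuation produced from infinitesimals in a monster does not obviously live on $K$ itself), and your termination argument for the recursion (``ramification and defect data strictly decrease'') is unsupported. The paper avoids all of this by taking $\Oo_\infty$, the intersection of all $0$-definable valuation rings --- a canonical object available in every model --- which by \cite[Theorem~1.2]{johnson21classificationdpmin} is henselian with residue field finite, real closed, or algebraically closed; in the finite-residue case one coarsens by the convex hull of $\Zz\cdot v_\infty(p)$ (a copy of $\Zz$, by finite ramification from Fact~\ref{dpmvr}) to obtain a henselian $\Oo$ with $p$-adically closed residue field. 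Conditions (1), (3), and defectlessness are then read off from Fact~\ref{dpmvr}, after noting that $(K,\Oo)$ is dp-minimal because adjoining a henselian valuation preserves dp-minimality (\cite[Proposition~5.14]{HaHaStrdep}); no ict-pattern constructions or V-topology machinery are needed at this stage.
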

In (2), we mean ``$p$-adically closed'' in the broad sense: $K$ is
$p$-adically closed if $K \equiv K'$ for some finite extension
$K'/\Qq_p$.  Fact~\ref{dpm-fields-class} is proved in
\cite[Theorem~7.3]{johnson21classificationdpmin}, but only under the extra
assumption that $K$ is sufficiently saturated(!).  For completeness, we have
included a proof in the unsaturated case in Appendix~\ref{appendix}.

Next, dp-minimal valuation rings\footnote{The citation is to the classification of dp-minimal \emph{valued fields} $(K,\Oo)$ rather than dp-minimal \emph{valuation rings} $\Oo$, but the distinction does not matter because the two structures $\Oo$ and $(K,\Oo)$ have the same dp-rank by Fact~\ref{F:K-R-rank} below.  Therefore, a valuation ring $\Oo$ is dp-minimal if and only if  the corresponding valued field $(K,\Oo)$ is dp-minimal.} are classified as follows:
\begin{fact}[{\cite[Theorems~1.5, 1.6]{johnson21classificationdpmin}}]
  \label{dpmvr}
  Let $\Oo$ be an infinite valuation ring with fraction field $K$, maximal ideal
  $\mm$, residue field $k = \Oo/\mm$, and value group $\Gamma =
  K^\times/\Oo^\times$.  Then $(\Oo,+,\cdot)$ is dp-minimal if and
  only if the following conditions hold:
  \begin{enumerate}
  \item $(k,+,\cdot)$ and $(\Gamma,+,\le)$ are dp-minimal or finite.
  \item $\Oo$ is henselian and defectless.
  \item One of the following cases holds:
    \begin{itemize}
    \item $k$ is finite of characteristic $p > 0$, $K$ has characteristic 0, and the interval
      $[-v(p),v(p)] \subseteq \Gamma$ is finite.
    \item $k$ is infinite of characteristic $p > 0$, and the interval
      $[-v(p),v(p)] \subseteq \Gamma$ is $p$-divisible.
    \item $k$ has characteristic $0$.
    \end{itemize}
  \end{enumerate}
\end{fact}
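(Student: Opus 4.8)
The plan is first to reduce to the classification of dp-minimal \emph{valued fields}. By Fact~\ref{F:K-R-rank} the ring $(\Oo,+,\cdot)$ and the valued field $(K,\Oo)$ have the same dp-rank, so $\Oo$ is dp-minimal if and only if $(K,\Oo)$ is, and the valued field is precisely the object classified in \cite[Theorems~1.5, 1.6]{johnson21classificationdpmin}. So, up to this translation, the statement is a citation; the work left is to recall the shape of the proof and, as with Fact~\ref{dpm-fields-class}, to see that the saturation hypothesis used in \cite{johnson21classificationdpmin} can be dropped.

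For the ``only if'' direction, henselianity of $\Oo$ follows from the general theory of NIP (indeed dp-minimal) valued fields, and defectlessness is likewise forced: a defect extension would let one build an ict-pattern of depth $2$. The value group $\Gamma = K^\times/\Oo^\times$ and the residue field $k = \Oo/\mm$ are interpretable in $(K,\Oo)$, hence of dp-rank $\le 1$, so each is finite (equivalently trivial, in the case of $\Gamma$) or dp-minimal, which is condition~(1). Condition~(3) is the delicate point: one analyses how $p$-th roots behave in the mixed-characteristic situation, showing that if the residue field is infinite and $[-v(p),v(p)]$ fails to be $p$-divisible, or the residue field is finite and $[-v(p),v(p)]$ is infinite, then enough Artin--Schreier-type or Kummer-type extensions appear to yield a depth-$2$ ict-pattern, again contradicting dp-minimality.

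The ``if'' direction is the substantial one. Given a henselian defectless $(K,\Oo)$ satisfying (1)--(3), one proves dp-minimality by an Ax--Kochen--Ershov-style relative quantifier elimination down to the leading-term (RV) structure, reducing the dp-rank of a one-variable set in $K$ to a dp-rank computation in RV; the role of condition~(3) is exactly to make the relevant expansion of ``$k$-and-$\Gamma$'' dp-minimal rather than of dp-rank $2$, so that dp-minimality survives the passage from RV back to $K$. For the unsaturated case, note that dp-minimality is invariant under elementary equivalence, and conditions (1)--(3) are $\equiv$-invariant for $(K,\Oo)$: henselianity and the interval conditions on $\Gamma$ are first-order, defectlessness is automatic once the residue characteristic is $0$ and otherwise reduces to a first-order statement in the cases at issue, and dp-minimality-or-finiteness of the interpretable structures $k$ and $\Gamma$ depends only on $\Th(K,\Oo)$. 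So the classification descends from a saturated elementary extension, exactly as in the field case handled in Appendix~\ref{appendix}.

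The main obstacle, for a self-contained argument rather than a citation, is the ``if'' direction: controlling one-variable definable sets in $K$ via RV and carrying out the dp-rank computation there, which is where the conditions in (3) must be used in an essential way. If one is content to cite \cite{johnson21classificationdpmin}, the only genuinely new point to check is that ``defectless'' (and the remaining clauses of (1)--(3)) is elementary, so that the saturation hypothesis can be removed.
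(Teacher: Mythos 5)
Your proposal takes essentially the same route as the paper: Fact~\ref{dpmvr} is treated there as a direct citation of \cite[Theorems~1.5, 1.6]{johnson21classificationdpmin}, the only added content being exactly your first step---that $\Oo$ and $(K,\Oo)$ have the same dp-rank by Fact~\ref{F:K-R-rank}, so dp-minimality of the valuation ring is equivalent to dp-minimality of the valued field. The one point where you diverge is unnecessary: unlike the field classification (Fact~\ref{dpm-fields-class}, whose source assumes saturation and is therefore reproved in Appendix~\ref{appendix}), the cited Theorems~1.5 and 1.6 carry no saturation hypothesis, so your sketched elementarity/descent argument (in particular the unproven claim that defectlessness is first-order in the relevant cases) is not needed, and the internal sketch of the ``if'' and ``only if'' directions plays no role in the paper.
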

For example, $\Zz_p$ and $\Qq_p[[t]]$ are dp-minimal valuation rings, but $\Ff_p[[t]]$ is not.

\section{Constructing dp-minimal integral domains}
\label{S:converse}
Recall that if $M$ is a structure, the \emph{Shelah expansion}
$M^{Sh}$ is the expansion of $M$ by all externally definable sets.  If
$M$ is NIP, then the definable sets in $M^{Sh}$ are precisely the
externally definable sets in $M$, and $M^{Sh}$ is also NIP (see \cite{Shelah783} or \cite[Section 3.3]{Sim15}). 

The following Fact is well-known \cite[Observation~3.8]{OU11}, but we had trouble understanding the proof, so we include a more detailed proof for completeness:

\begin{fact} \label{external-dp}
  $M^{Sh}$ has the same dp-rank as $M$, and in fact, if $D \subseteq
  M^n$ is definable, then $D$ has the same dp-rank whether considered
  in $M$ or in $M^{Sh}$.
\end{fact}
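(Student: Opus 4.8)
The plan is to prove the stronger ``in fact'' clause: for every $L$-formula $\delta(x)$ with parameters in $M$, the dp-rank of $\delta$ is the same whether computed in $M$ or in $M^{Sh}$. The first assertion is then the special case $\delta(x) = (x = x)$ with $x$ a single variable, as $M$ and $M^{Sh}$ have the same underlying set. We assume $M$ is NIP throughout (the only case we will use); let $L^{Sh} \supseteq L$ be the language of $M^{Sh}$, with a predicate $P$ for each externally definable subset of a Cartesian power of $M$.

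The main idea is to move the computation to a pair of highly saturated structures with a common $L$-reduct. By Shelah's theorem on expansions by externally definable sets (\cite{Shelah783}; \cite[Section~3.3]{Sim15}), since $M$ is NIP there is an elementary extension $\mathfrak{C}^{*} \succeq M^{Sh}$ in which every predicate $P \in L^{Sh} \setminus L$ is defined by an $L$-formula with parameters, say $P^{\mathfrak{C}^{*}} = \phi_{P}(\mathfrak{C}^{*}, b_{P})$ for an $L$-formula $\phi_{P}(x; y)$ and a tuple $b_{P}$ from $\mathfrak{C}^{*}$. Each such equivalence is expressed by an $L^{Sh}$-sentence with parameters from $\mathfrak{C}^{*}$, hence persists in every elementary extension of $\mathfrak{C}^{*}$, so we may freely enlarge $\mathfrak{C}^{*}$ to make it as saturated as we like. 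Let $\mathfrak{C}$ be the $L$-reduct of $\mathfrak{C}^{*}$, so $M \preceq \mathfrak{C} \models \Th(M)$ and $\mathfrak{C}$ is equally saturated. Since the dp-rank of a formula over a small parameter set is absolute, and in a sufficiently saturated model a witnessing ict-pattern (together with the realizations witnessing its consistency conditions) can be taken inside the model, it suffices to show that $\delta$ has the same dp-rank in $\mathfrak{C}^{*}$ as in $\mathfrak{C}$; fix a cardinal $\kappa$ and take $\mathfrak{C}^{*}$ saturated enough for $\kappa$.

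One inequality is free. An ict-pattern of depth $\kappa$ in $\delta$ over $\mathfrak{C}$ uses only $L$-formulas; since $L \subseteq L^{Sh}$ and $\mathfrak{C}$, $\mathfrak{C}^{*}$ agree on all $L$-formulas, the very same array---with its parameters and witnesses taken inside $\mathfrak{C} = \mathfrak{C}^{*}$---is an ict-pattern of depth $\kappa$ in $\delta$ over $\mathfrak{C}^{*}$; so $\delta$ having dp-rank $\ge \kappa$ in $\mathfrak{C}$ forces the same in $\mathfrak{C}^{*}$ (dp-rank cannot drop under expansion). For the converse, let $\big(\psi_{\alpha}(x; y_{\alpha}) : \alpha < \kappa\big)$, with parameters $b_{\alpha,i}$ for $i < \omega$, be an ict-pattern of depth $\kappa$ in $\delta$ over $\mathfrak{C}^{*}$, with all $b_{\alpha,i}$ and all witnesses of its consistency conditions inside $\mathfrak{C}^{*}$. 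Substituting $\phi_{P}(\,\cdot\,, b_{P})$ for every occurrence of a predicate $P \in L^{Sh} \setminus L$ in $\psi_{\alpha}$---including under quantifiers---yields an $L$-formula $\phi_{\alpha}(x; y_{\alpha}, w_{\alpha})$ and a fixed tuple $\bar{e}_{\alpha}$ from $\mathfrak{C}$ with $\mathfrak{C}^{*} \models \psi_{\alpha}(a, c) \leftrightarrow \phi_{\alpha}(a, c, \bar{e}_{\alpha})$ for all $a$, $c$. The array of $L$-formulas $\phi_{\alpha}$, with parameter $(b_{\alpha,i}, \bar{e}_{\alpha})$ in position $(\alpha, i)$, is then an ict-pattern of depth $\kappa$ in $\delta$ over $\mathfrak{C}$: that the second coordinate repeats down each row is harmless, as an ict-pattern imposes no genericity on its parameters, and a realization in $\mathfrak{C}^{*}$ of the type attached to a function $\eta : \kappa \to \omega$ also realizes the rewritten type, the relevant truth values being unchanged and $\mathfrak{C}$, $\mathfrak{C}^{*}$ agreeing on $L$. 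Hence $\delta$ has dp-rank $\ge \kappa$ in $\mathfrak{C}$ as well, and letting $\kappa$ vary we conclude.

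The one non-elementary ingredient---and the step I expect to be the real obstacle---is the appeal to Shelah's theorem for the expansion $\mathfrak{C}^{*}$ of a saturated model of $\Th(M)$ with $M^{Sh} \preceq \mathfrak{C}^{*}$ and the new predicates $L$-definable; this is precisely where the NIP hypothesis enters (through honest definitions), and it is exactly what lets the syntactic rewriting above make sense at the level of the ambient monster model. Everything afterwards is routine manipulation of ict-patterns, resting on the two facts that an expansion cannot lower dp-rank and that the substitution does not affect the finite-character consistency conditions defining an ict-pattern.
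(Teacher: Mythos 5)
Your setup (computing dp-rank in a saturated model $\mathfrak{C}^{*}\succeq M^{Sh}$ with $L$-reduct $\mathfrak{C}$, and the easy inequality that passing to a reduct cannot raise dp-rank of an ict-pattern already written in $L$) is fine, but the central step is false. There is in general \emph{no} elementary extension $\mathfrak{C}^{*}\succeq M^{Sh}$ in which every predicate $P\in L^{Sh}\setminus L$ satisfies $P^{\mathfrak{C}^{*}}=\phi_{P}(\mathfrak{C}^{*},b_{P})$ for an $L$-formula $\phi_{P}$. Shelah's theorem (and the honest-definitions machinery) only gives that $P$ is the \emph{trace on $M$} of an $L$-definable set in an elementary extension, i.e.\ $P=\phi_{P}(M,b_{P})$; this equivalence on the small set $M$ does not extend to all of $\mathfrak{C}^{*}$, and no elementary extension can create it. Concretely, take $M=(\Qq,<)$ and the externally definable cut $P=\{x\in\Qq : x<\pi\}$. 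The $L^{Sh}$-theory of $(M,P)$ says that $P$ is a nonempty proper downward-closed set with no maximum whose complement has no minimum; these are first-order properties and persist in every model of $\Th(M^{Sh})$, whereas any $L$-definable downward-closed proper subset of a dense linear order either has a maximum or its complement has a minimum. So in no elementary extension of $M^{Sh}$ does $P$ become $L$-definable, and the substitution $P\mapsto\phi_{P}(\cdot,b_{P})$ inside the formulas $\psi_{\alpha}$ cannot be performed at the level of $\mathfrak{C}^{*}$. You flagged exactly this appeal as the likely obstacle, and it is indeed where the argument breaks: the parameters $b_{\alpha,i}$ and the witnesses of the pattern live outside $M$, precisely where the identification of $L^{Sh}$-predicates with $L$-formulas fails.

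The missing idea is how to transfer the equivalence, which holds only on $M$, to the tuples actually witnessing the ict-pattern. The paper does this as follows: each $L^{Sh}$-formula $\phi_{\alpha}(x,y)$ agrees \emph{on $M$} with an $L$-formula $\theta_{\alpha}(x,y,e_{\alpha})$ with $e_{\alpha}$ in the monster; after moving the $e_{\alpha}$ by an automorphism in $\Aut(\Mm^{*}/M)$ one may assume that $\tp^{L^{Sh}}(\bar{a}\bar{b}/M\bar{e})$ is finitely satisfiable in $M$; since the formula $\neg(\theta_{\alpha}(x,y,e_{\alpha})\leftrightarrow\phi_{\alpha}(x,y))$ has no solutions in $M$, the witnesses $(a_{\eta},b_{\alpha,i})$ cannot satisfy it, so $\theta_{\alpha}$ reproduces the whole pattern in the $L$-reduct. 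Some coheir (or honest-definition) argument of this kind is unavoidable; the purely syntactic global rewriting you propose does not exist.
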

\begin{proof}
  Let $L$ and $L^{Sh}$ be the languages of $M$ and $M^{Sh}$.  Let
  $\Mm^*$ be a monster model elementary extension of $M^{Sh}$, and let
  $\Mm$ be its reduct to $L$.  (Note that $\Mm^*$ is probably not the
  Shelah expansion of $\Mm$.)  Let $\psi(x,c)$ be the $L(M)$-formula
  defining $D$ in $M$ and $M^{Sh}$.  Suppose $\dpr(\psi(x,c)) \ge
  \kappa$ in $\Mm^*$.  We must show $\dpr(\psi(x,c)) \ge \kappa$ in
  $\Mm$.  By definition of dp-rank, there are $L^{Sh}$-formulas
  $\phi_\alpha(x,y)$, elements $a_\eta$ in $\Mm$ for $\eta : \kappa
  \to \omega$, and elements $b_{\alpha,i}$ in $\Mm$ for $\alpha <
  \kappa$ and $i < \omega$, such that
  \begin{gather*}
    \Mm \models \psi(a_\eta,c) \text{ for each } \eta \\
    \Mm^* \models \phi_\alpha(a_\eta,b_{\alpha,i}) \iff \eta(\alpha) = i \text{ for each } \eta, \alpha, i.
  \end{gather*}
  Each $L^{Sh}$-formula $\phi_\alpha(x,y)$ is equivalent \emph{on the
  small set $M$} to an $L$-formula $\theta_\alpha(x,y,e_\alpha)$, with
  $e_\alpha$ in $\Mm$.  Moving the $e_\alpha$'s by an automorphism in
  $\Aut(\Mm^*/M)$, we may assume that $\tp^{L^{Sh}}(\bar{a}\bar{b}/M\bar{e})$
  is finitely satisfiable in $M$.  For any $\eta, \alpha, i$, the $L^{Sh}$-formula
  \begin{equation*}
    \neg(\theta_\alpha(x,y,e_\alpha) \leftrightarrow \phi_\alpha(x,y))
  \end{equation*}
  is not finitely satisfiable in $M$, so $(a_\eta,b_{\alpha,i})$ cannot
  satisfy it, and so
  \begin{equation*}
    \Mm^* \models \theta_\alpha(a_\eta,b_{\alpha,i},e_\alpha) \leftrightarrow \phi_\alpha(a_\eta,b_{\alpha,i}).
  \end{equation*}
  Then
  \begin{gather*}
    \Mm \models \theta_\alpha(a_\eta,b_{\alpha,i},e_\alpha) \iff \eta(\alpha) = i,
  \end{gather*}
  and we have an ict-pattern of depth $\kappa$ in $D$, in the original
  language $L$, showing that $\dpr(D) \ge \kappa$ in $M$.
\end{proof}
In light of Fact~\ref{external-dp}, the following is nearly
trivial:
\begin{proposition} \label{prop-construction}
  Let $\Oo$ be a dp-minimal valuation ring, let $I \subseteq \Oo$ be a
  proper ideal (not necessarily definable), let $R_0$ be a finite
  subring of $\Oo/I$, and let $R \subseteq \Oo$ be the preimage of
  $R_0$ under the quotient map $\Oo \to \Oo/I$.  Then $R$ is a
  dp-minimal integral domain.
\end{proposition}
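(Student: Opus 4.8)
The plan is to exhibit $R$ as a definable set inside a structure whose dp-rank we already control, namely the Shelah expansion of the valued field $(K, \Oo)$, and then invoke Fact~\ref{external-dp} to transfer dp-minimality back down to $R$. First I would pass from the valuation ring $\Oo$ to the two-sorted (or rather one-sorted-with-a-predicate) structure $(K, \Oo)$; by the footnote to Fact~\ref{dpmvr} (citing Fact~\ref{F:K-R-rank}), $(K,\Oo)$ has the same dp-rank as $\Oo$, so it is dp-minimal. The point of working in $(K,\Oo)$ rather than in $\Oo$ is that $K = \Frac(R)$ is visible, and $R$ sits inside $K$.

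**Realizing $R$ as an externally definable set.**

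The key step is the observation that $R$ is externally definable in $(K,\Oo)$, i.e.\ definable in the Shelah expansion $(K,\Oo)^{Sh}$. Indeed, $R$ is a union of cosets of $I$ inside $\Oo$: writing $\pi \colon \Oo \to \Oo/I$ for the quotient map, we have $R = \pi^{-1}(R_0)$, and since $R_0 = \{r_1, \dots, r_n\}$ is \emph{finite}, choosing lifts $a_1, \dots, a_n \in \Oo$ with $\pi(a_j) = r_j$ gives
\begin{equation*}
  R = \bigcup_{j=1}^n (a_j + I) = \{x \in \Oo : \bigvee_{j=1}^n (x - a_j \in I)\}.
\end{equation*}
So it suffices to see that $I$ itself is externally definable. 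But $I$ is an ideal of the valuation ring $\Oo$, hence an initial segment for divisibility: $I = \{x \in \Oo : v(x) \mathrel{\square} \gamma\}$ for some cut $\gamma$ in the value group (with $\square \in \{>, \ge\}$), as recalled in Section~\ref{ss:notation}. Such a set is a trace on $K$ of a formula with parameters in an elementary extension — take $\gamma' \in \Gamma' \succ \Gamma$ realizing the cut and use $v(x) \le \gamma'$ or $v(x) < \gamma'$ — hence externally definable. (Equivalently: $I$, being a downward-closed union of $\Oo$-balls, is externally definable in any NIP valued field.) Therefore each coset $a_j + I$ is externally definable, and so is their finite union $R$.

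**Concluding.**

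Now $R$ is a set definable in $(K,\Oo)^{Sh}$, and $(K,\Oo)^{Sh}$ has the same dp-rank as $(K,\Oo)$, namely $1$, by Fact~\ref{external-dp}; in particular every definable subset of a power of $(K,\Oo)^{Sh}$ has dp-rank $\le 1$. Hence $R$, viewed in $(K,\Oo)^{Sh}$, has dp-rank $\le 1$. By Fact~\ref{external-dp} again (applied to the definable-over-$M$ set exhibiting $R$, or simply because dp-rank is monotone under reducts), the induced structure on $R$ has dp-rank $\le 1$. Since the ring structure $(R,+,\cdot)$ is a reduct of the structure induced on $R$ from $(K,\Oo)^{Sh}$ (addition and multiplication on $R$ are restrictions of the ambient field operations), $\dpr(R,+,\cdot) \le 1$; and $R \supseteq \Oo \cdot 1 \ni$ infinitely many elements — indeed $I \subseteq R$ and $\Oo/I$ contains the infinite ring $R$'s image — wait, more simply $R \supseteq I$ and $R$ is infinite because it contains the infinite valuation ring's nonzero ideal $I$ unless $I = 0$, in which case $R = \Oo$ is infinite anyway, so $\dpr(R) \ge 1$. (And $R$ is visibly an integral domain, being a subring of the field $K$.) Combining, $\dpr(R,+,\cdot) = 1$, i.e.\ $R$ is a dp-minimal integral domain.

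**Main obstacle.**

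The only slightly delicate point — and the place where one must be careful rather than where real work is needed — is the passage ``$R$ is a definable set in $(K,\Oo)^{Sh}$, therefore $(R,+,\cdot)$ is dp-minimal.'' One must check that the ring structure on $R$ is genuinely interpretable as a reduct of $(K,\Oo)^{Sh}\!\restriction\! R$ (it is: $+,\cdot$ are restrictions of the field operations and $R$ is closed under them, as $R$ is a subring), and then use that dp-rank does not increase under passing to a reduct, together with the ``$D$ has the same dp-rank in $M$ and $M^{Sh}$'' half of Fact~\ref{external-dp} to know $\dpr_M(K,\Oo) = \dpr_{M^{Sh}}(K,\Oo) = 1$ controls $\dpr$ of definable subsets of $(K,\Oo)^{Sh}$. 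None of this is hard, but it is the conceptual content; everything else (the shape of ideals in a valuation ring, finiteness of $R_0$) is bookkeeping.
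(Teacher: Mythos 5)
Your proof is correct and follows essentially the same route as the paper: an ideal of a valuation ring is determined by a cut in the value group and hence externally definable, so $R$ is definable in the Shelah expansion, and Fact~\ref{external-dp} transfers dp-minimality down to the reduct $(R,+,\cdot)$. (One small slip in your infinitude aside: if $I=0$ then $R=R_0$ is finite, not $\Oo$; this degenerate case must be read as excluded, and the paper's proof passes over the infinitude point just as silently.)
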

\begin{proof}
  Let $K$ be the fraction field of $\Oo$, let $\Gamma$ be the value
  group, and let $v : K \to \Gamma$ be the valuation.  As an ideal in
  a valuation ring, $I$ must have the form $\{x \in K : v(x) > \xi\}$
  for some $\xi$ in an elementary extension of $\Gamma$.  The set $\{\gamma \in \Gamma :
  \gamma > \xi\}$ is externally definable, so it is definable in the
  Shelah expansion of $\Oo$.  Then $\Oo/I$, $R_0$, and $R$ are all
  definable in the Shelah expansion.  It follows that $R$ is
  externally definable in $\Oo$.  Then $(\Oo,+,\cdot,R)$ is a reduct
  of the Shelah expansion $\Oo^{Sh}$, so $(\Oo,+,\cdot,R)$ is
  dp-minimal, implying that $(R,+,\cdot)$ is dp-minimal (as $R
  \subseteq \Oo$).
\end{proof}

\begin{remark} \label{remark-on-examples}
  For which dp-minimal valuation rings $\Oo$ can we find an ideal $I$
  and a finite subring $R_0 \subseteq \Oo/I$?  To begin with, $\Oo$
  must have positive residue characteristic, or else $\Oo$ and $\Oo/I$
  are $\Qq$-algebras, which have no finite subrings.  If the residue
  characteristic of $\Oo$ is $p > 0$, then there are two possibilities
  for $\Oo$, by Fact~\ref{dpmvr}(3):
  \begin{enumerate}
  \item The residue field of $\Oo$ is finite.  Then $\Oo$ has mixed
    characteristic and is finitely ramified.  The quotient $\Oo/I$ can
    only have positive characteristic if the ideal $I$ contains $p^n
    \Oo$ for some $n$.  Then $R$ is the pullback of some subring of
    $\Oo/p^n \Oo$.  Conversely, every subring of $\Oo / p^n \Oo$ gives
    a possibility for $R$, because $\Oo / p^n \Oo$ is finite.
    Example~\ref{ex-1} is a typical example.
  \item The residue field of $\Oo$ can be infinite, such as a model of
    $\ACF_p$.  Example~\ref{ex-2} is a typical example.  When $\Oo$ has
    equicharacteristic $p$, there is at least one finite subring of
    $\Oo/I$ for any proper ideal $I$, namely $\Ff_p \subseteq \Oo/I$.
    When $\Oo$ has mixed characteristic, the ideal $I$ must contain
    $p^n \Oo$ for some $n$, but there are many such ideals $I$ because
    the convex hull of $\Zz \cdot v(p)$ is $p$-divisible by
    Fact~\ref{dpmvr}(3).
  \end{enumerate}
  Over the course of the paper, we will see that these two cases
  behave very differently.  For example, note that $R$ has finite
  index in $\Oo$ in the first case, but infinite index in $\Oo$ in the
  second case (because $\Oo/I$ is infinite).  In the first case, $R$
  is definable in the structure $(\Oo,+,\cdot)$, but in the second
  case this can fail (take $I$ non-definable).  Later, we will see
  that $R/R^{00}$ is infinite in the first case, but finite in the
  second case (Proposition~\ref{prop-alignment}).
\end{remark}

\section{Tools}
\label{S:basic}
We review some known and new results on NIP and dp-minimal rings and domains. Unless specified otherwise, any ring below is not assumed to be pure, i.e. there might be some more structure.

\subsection{Basic facts about NIP and dp-minimal domains}
\label{ss:basicbasic}
Every NIP ring has finitely many maximal ideals \cite[Proposition 2.1]{dp-min-domains} and if it is a dp-minimal domain then its prime ideals are linearly ordered by inclusion and so it is either a field or a local ring \cite[Corollary 2.5]{dp-min-domains}.

\begin{fact} \label{F:prime ideals are ext}\label{F:primes are ext def}
    Any prime ideal $\pp$ in a ring $R$ is externally definable.
\end{fact}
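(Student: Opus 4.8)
The plan is to show that for any prime ideal $\pp \lhd R$, the set $\pp$ is definable in the Shelah expansion $R^{Sh}$, equivalently that it is a union of classes of a type-definable (ideally $\vee$-definable) equivalence relation whose restriction to $R$ is externally definable. The cleanest route exploits the localization $R_\pp$ and its maximal ideal $\pp R_\pp$: first I would pass to a sufficiently saturated elementary extension $R^*$ (externally definable does not change under passing to elementary extensions/back), and recall that $\pp^* := \pp(R^*)$, the interpretation of the formula/type cutting out $\pp$, is still prime. The key observation is that $\pp^* = \{x \in R^* : (1+x)x' \in \pp^* \text{ for some } x'\}$-type descriptions let us write $\pp$ using a condition of the form ``$x$ is a non-unit of $R_\pp$'', but more directly: an element $x$ lies in $\pp$ iff the principal ideal it generates, together with the multiplicatively closed set $S = R \setminus \pp$, fails to meet — i.e. $xR \cap S = \varnothing$. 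This is a $\forall$-condition over the parameter set (quantifying over $R$), hence $\pp$ is $\bigwedge$-definable (type-definable) over $\varnothing$ with parameters naming $S$, but that is not yet external definability.

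A more robust approach: use that $R \setminus \pp$ is the set of elements $s$ such that $s$ is invertible in $R_\pp$; then observe $\pp = \{x : x/1 \in \mm_\pp\}$ where $\mm_\pp$ is the maximal ideal of the local ring $R_\pp$. Since $R$ is interdefinable with its structure together with $\Frac(R)$ (as noted in Section~\ref{ss:notation}), and localizations are interpretable, the relevant point is that the maximal ideal of a local ring that is interpretable in $R$ is externally definable — and for this I would invoke the standard fact that in any structure, if $\mm$ is the (unique) maximal ideal of an interpretable local ring, then $\mm$ is a union of a single $\vee$-definable "infinitesimal" equivalence class, hence externally definable via a cut. Concretely: $x \in \pp$ iff for every $r \in R$, the element $1 - rx$ is a unit in $R$ or at least not in $\pp$; iterating, $\pp$ is exactly $\bigcap_{r} \{x : 1 - rx \notin \pp\} $... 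I would instead cite the folklore argument that the Jacobson-type radical description makes $\pp$ a countable intersection of definable sets relative to $S$, then appeal to NIP/honest definitions — but since the paper does not yet assume NIP here, the right move is the direct one below.

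Here is the direct argument I would write up. Work in a monster model $\Mm \succ R$. Let $\pp^* = \pp(\Mm)$ be the ideal of $\Mm$ generated by $\pp$; since $\pp$ is prime in $R$ and $R \preceq \Mm$... actually $\pp^*$ need not be prime, so instead let $\qq^*$ be any prime ideal of $\Mm$ lying over $\pp$ (exists by going-up / lying-over since $R \to \Mm$ is faithfully flat as an elementary extension is injective — more carefully, take a prime of $\Mm$ contracting to $\pp$, which exists because $\pp\Mm \ne \Mm$ as $1 \notin \pp$). Then $\qq^* \cap R = \pp$: indeed $\supseteq \pp$ by construction and $\subseteq$ since a prime over $\pp$ contracts to a prime containing $\pp$, and $\pp$ is maximal among... no — this needs $\pp$ prime only gives $\qq^* \cap R$ prime containing $\pp$, which could be strictly larger. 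To force equality, localize: replace $\Mm$ by $\Mm_S$ where $S = R \setminus \pp$; then $\pp\Mm_S \ne \Mm_S$ and any prime $\qq^*$ of $\Mm_S$ with $\qq^* \supseteq \pp\Mm_S$ satisfies $\qq^* \cap R \subseteq \pp$ automatically (elements of $S$ are units in $\Mm_S$), so $\qq^* \cap R = \pp$. Now $\qq^* \cap R$ is the trace on $R$ of the set $\qq^*$, which is an $\Mm_S$-definable (with parameters) subset — picking $\qq^*$ to be generated by finitely... if $\qq^*$ is not definable, take it minimal, or take $\qq^*$ definable by choosing it to be a prime above a maximal ideal. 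The cleanest: $\Mm_S$ is local-izable; take $\qq^* = $ the maximal ideal of $(\Mm_S)_{\mm_S}$...

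To keep this clean in the final writeup I would organize it as: (1) it suffices to produce, in some elementary extension $\Mm \succeq R$, a \emph{definable} set $D \subseteq \Mm$ with $D \cap R = \pp$; (2) construct $D$ as follows — since $\pp$ is prime, $(R/\pp)$ embeds in $\Frac(R/\pp) =: L$, a field interpretable in $R^{Sh}$... this is circular. So (2)': pick a model-theoretic ultrafilter / use that $\Frac(R)$ is already a sort; let $\Oo = R_\pp \subseteq \Frac(R)$; then $\Oo$ and its maximal ideal $\mm_\pp$ are $\vee$-definable in $R$ with $\pp = \mm_\pp \cap R$, and $\vee$-definable sets are externally definable (e.g. by Proposition~\ref{P:vee valuation ring is ext def}-style reasoning, or more simply: a $\vee$-definable set over $\varnothing$ is definable in $R^{Sh}$ because its complement is type-definable, hence the set is a union of a directed family of definable sets, and in $R^{Sh}$ — after noting the relevant type is realized over $R$ by compactness in a bigger model — we can name the realization). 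The main obstacle, and the step I expect to spend the most care on, is exactly this: justifying that the specific $\vee$-definable (in fact "$\exists r \in R:\ rx \in \text{(finitely generated piece of }\pp)$"-type) presentation of $\pp$ is genuinely externally definable rather than merely type-definable — i.e. handling the asymmetry between $\bigwedge$- and $\bigvee$-definable sets. I would resolve it by writing $R \setminus \pp$ as $\bigcap_{a \in R \setminus \pp}\{x : x \notin \sqrt{aR + \pp_0}\}$ is not right either; the honest fix is: $R \setminus \pp = \bigcup_{s \in R \setminus \pp} \{x \in R : \exists y\ xy \equiv s \pmod{?}\}$ — ultimately the correct statement is that $R \setminus \pp = \{x \in R : \exists n,\ x^n \in R \cdot (R\setminus\pp)\cdot\dots\}$ and both $\pp$ and its complement admit $\bigvee$-presentations over $\varnothing$ with parameters in $R$, so naming those parameters in an elementary extension and applying compactness shows $\pp$ is a trace of a definable set, giving external definability. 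I expect the final clean proof to be three or four lines once the right presentation is chosen, with all the difficulty concentrated in selecting that presentation.
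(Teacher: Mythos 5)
There is a genuine gap: your write-up never settles on a complete argument, and the general principles you fall back on at the end are false. The claim that a $\vee$-definable set (or a set admitting both a $\bigvee$- and a $\bigwedge$-presentation with parameters from $R$) is automatically externally definable does not hold: every subset $X$ of any model is a union of definable singletons and an intersection of complements of singletons, yet most subsets are not externally definable. External definability requires exhibiting a \emph{single} formula $\phi(x,y)$ such that the type over $R$ asserting ``$\phi(a,y)$ for all $a$ in the set and $\neg\phi(b,y)$ for all $b$ outside it'' is finitely satisfiable; naming parameters and ``applying compactness'' does not substitute for this. Your other routes have the same problem in different clothing: in the lying-over approach, the prime $\qq^*$ of the elementary extension contracting to $\pp$ need not be definable in that extension (that is exactly the issue being addressed), so its trace does not witness external definability; and the appeal to Proposition~\ref{P:vee valuation ring is ext def}-style reasoning for $\mm_\pp \subseteq R_\pp$ uses the valuation-ring structure of the proof of that proposition, which is unavailable here since the Fact is about an arbitrary ring $R$ and arbitrary prime $\pp$ (no dp-minimality, not even NIP, is assumed).

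The missing idea is a short finite-satisfiability argument that exploits primality directly. Consider the partial type
\begin{equation*}
\Sigma(x) \;=\; \{a \mid x : a \in R \setminus \pp\} \cup \{b \nmid x : b \in \pp\}.
\end{equation*}
A finite subset involving $a_1,\dots,a_n \in R\setminus\pp$ and $b_1,\dots,b_m \in \pp$ is realized by the product $a_1\cdots a_n$: it is divisible by each $a_i$, and it lies outside $\pp$ because $\pp$ is prime, so no $b_j$ divides it (otherwise it would lie in $b_jR \subseteq \pp$). A realization $c$ in an elementary extension then makes $\pp$ exactly the trace on $R$ of the single formula $x \nmid c$: each $b \in \pp$ satisfies it, and each $a \notin \pp$ fails it since $a \mid c$. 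This is the step your proposal circles around (you note that ``$xR \cap S = \varnothing$'' only gives type-definability) but never supplies; once you have it, the proof is three lines and needs no localization, no Shelah expansion, and no hypotheses on $R$ beyond $\pp$ being prime.
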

When $R$ is NIP, this is \cite[Proposition~2.13]{nip-fp-alg}, but NIP turns out to be unnecessary.
\begin{proof}
    Let $\Sigma(x)$ be the partial type $\{a\mid x : a \in
  R \setminus \pp\} \cup \{b \nmid x : b \in \pp\}$.  Then
  $\Sigma(x)$ is finitely satisfiable---any finite subtype
  \begin{equation*}
    \{a_1 \mid x,~ a_2 \mid x,~ \ldots,~ a_n \mid x,~ b_1 \nmid x,~ b_2 \nmid x,~ \ldots,~ b_m \nmid x\} \subseteq \Sigma(x)
  \end{equation*}
  is realized by the product $a_1a_2 \cdots a_n$.  If $R' \succ R$
  is an elementary extension containing an element $c$ realizing
  $\Sigma(x)$, then the formula $x \nmid c$ defines a set whose
  trace on $R$ is $\pp$.
\end{proof}

 As a consequence, if $(R,+,\cdot,\dots)$ is an NIP ring and $\pp$ is any prime ideal, then the expansion $(R,+,\cdot,\dots,\pp)$ is NIP of the same dp-rank (Fact \ref{external-dp}).  The bi-interpretable structure $(R_\pp,R,+,\cdot,\dots)$ is also NIP, and in fact has the same dp-rank as $R$ by the following:
\begin{fact} \label{F:K-R-rank}
If $R$ is a definable ring and $\pp$ is a definable prime ideal in some structure, then the definable ring $R_\pp$ has the same dp-rank as $R$.  In particular, if $R$ is an integral domain then $\Frac(R)$ has the same dp-rank as $R$.
\end{fact}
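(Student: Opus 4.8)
The plan is to show $\dpr(R)\le\dpr(R_\pp)$ and $\dpr(R_\pp)\le\dpr(R)$; the ``in particular'' clause is then the special case $\pp=(0)$, $R_{(0)}=\Frac(R)$. The first inequality is the easy one: the localization map $\iota\colon R\to R_\pp$, $a\mapsto a/1$, is a definable ring homomorphism, and it is injective whenever no element of $R\setminus\pp$ is a zero-divisor --- in particular when $R$ is a domain, which is the situation we need. Then $R$ is definably isomorphic to the definable subring $\iota(R)\subseteq R_\pp$, so $\dpr(R)=\dpr(\iota(R))\le\dpr(R_\pp)$ by monotonicity of dp-rank under definable subsets.

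For $\dpr(R_\pp)\le\dpr(R)$ I would realize $R_\pp$ as a directed union of definable copies of $R$ and then use saturation to force any ict-pattern into one of them. Set $S=R\setminus\pp$, a definable multiplicatively closed subset, and for $s\in S$ let $s^{-1}R:=\{a/s:a\in R\}=\{y\in R_\pp:\iota(s)\,y\in\iota(R)\}$, a definable family of subsets of $R_\pp$ uniform in the parameter $s$. Each $s^{-1}R$ is the image of $R$ under the definable surjection $a\mapsto a/s$, so $\dpr(s^{-1}R)\le\dpr(R)$: dp-rank does not increase along a definable surjection, since from an ict-pattern in the image one gets one in the domain by pulling the formulas back and replacing each realization of a path by an arbitrary preimage. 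Moreover $R_\pp=\bigcup_{s\in S}s^{-1}R$, as every element of $R_\pp$ has a denominator in $S$, and the union is directed since $s^{-1}R\cup t^{-1}R\subseteq(st)^{-1}R$ with $st\in S$. These last statements are first-order in the (two-sorted) structure $(R_\pp,R,\iota,S,+,\cdot)$, hence persist in elementary extensions.

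Now suppose $\dpr(R_\pp)\ge\kappa$, witnessed by an ict-pattern of depth $\kappa$ in $R_\pp$ with formulas $\phi_\alpha(x,y_\alpha)$ and parameters $b_{\alpha,i}$. In a sufficiently saturated monster $\Mm$ of the ambient structure, choose for each $\eta\colon\kappa\to\omega$ a realization $c_\eta\in R_\pp(\Mm)$ of the $\eta$-path. Each $c_\eta$ lies in $s^{-1}R(\Mm)$ for some $s\in S(\Mm)$, and by directedness the partial type in a variable $s$ saying ``$s\in S$ and $c_\eta\in s^{-1}R$ for every $\eta$'' is finitely satisfiable, hence realized by some $s^*\in S(\Mm)$. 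The same formulas and parameters then form an ict-pattern of depth $\kappa$ in the partial type $x\in(s^*)^{-1}R$, still realized by the $c_\eta$, so $\dpr((s^*)^{-1}R)\ge\kappa$; together with $\dpr((s^*)^{-1}R)\le\dpr(R)$ this gives $\dpr(R)\ge\kappa$, hence $\dpr(R_\pp)\le\dpr(R)$.

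The heart of the matter, and the step I expect to need the most care, is this saturation argument: it works precisely because $\{s^{-1}R\}_{s\in S}$ is a \emph{definable, directed} family of sets each of dp-rank at most $\dpr(R)$, so ``membership in some member of the family'' is controlled by a finitely satisfiable type in $s$, letting us capture all of the (set-many) path-realizations inside a single copy of $R$. The remaining ingredients --- invariance of dp-rank under definable subsets, its non-increase along definable surjections, and the transfer of the localization's first-order properties to $\Mm$ --- are routine.
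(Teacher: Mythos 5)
Your proof is correct, and it is worth noting that the paper itself gives no argument for Fact~\ref{F:K-R-rank}: it simply defers to \cite[Proposition~2.8(2)]{dp-min-domains} and, for the $\Frac(R)$ case, to \cite[Lemma~10.25]{dpf4}. What you wrote is a self-contained proof along the standard lines behind those citations: the two routine ingredients (dp-rank is preserved by definable bijections and monotone on definable subsets, and does not increase under definable surjections) are used correctly, and the step you flag as delicate is indeed the crux and is handled properly --- the family $\{s^{-1}R\}_{s\in S}$ is uniformly definable and directed, so the type in $s$ over the path-realizations $c_\eta$ is finitely satisfiable and realized in a sufficiently saturated monster, which one may always pass to since dp-rank is insensitive to the choice of monster. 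One point to make explicit: the Fact is stated for an arbitrary definable ring $R$, whereas your lower bound $\dpr(R)\le\dpr(R_\pp)$ is proved only when the localization map $\iota\colon R\to R_\pp$ is injective, i.e.\ when no element of $R\setminus\pp$ is a zero divisor. This is not a defect of your argument but a genuinely necessary hypothesis: for the pure ring $R=\Zz_p\times\Zz_p$, which has dp-rank $2$, and the definable maximal (hence prime) ideal $\pp=p\Zz_p\times\Zz_p$, the element $(1,0)\in R\setminus\pp$ annihilates $\{0\}\times\Zz_p$, so $R_\pp\cong\Zz_p$ has dp-rank $1$. So the statement should be read as the inequality $\dpr(R_\pp)\le\dpr(R)$ in general (which you prove unconditionally), with equality when $\iota$ is injective, in particular for domains, which gives the $\Frac(R)$ clause; this reading covers every use in the paper, since the one application to a ring that need not be a domain (Proposition~\ref{P:non-domains}) only needs the inequality you establish in full generality.
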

This is essentially  \cite[Proposition 2.8(2)]{dp-min-domains} (purity of the structure $R$ is inessential), or \cite[Lemma~10.25]{dpf4} for the case of $\Frac(R)$.




\begin{fact}\label{F:dp-min R comparable to O}
    Let $M=(K,R,\Oo,+,\cdot,\dots)$ be an expansion of a field together with a predicate for a subring and a valuation ring, with $\Frac (R)=\Frac (\Oo)=K$. If $M$ is dp-minimal then either $R\subseteq \Oo$ or $\Oo\subseteq R$.
\end{fact}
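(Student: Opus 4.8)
The plan is to prove the contrapositive: if $R$ and $\Oo$ are incomparable then $M$ admits an ict-pattern of depth $2$ on the home sort, so $\dpr(M)\ge 2$ and $M$ is not dp-minimal. I would first isolate a group-theoretic criterion: \emph{if $H_0,H_1$ are definable additive subgroups of $(K,+)$ with both indices $[H_0:H_0\cap H_1]$ and $[H_1:H_0\cap H_1]$ infinite, then $\dpr(M)\ge 2$.} To see this, put $H:=H_0\cap H_1$ and choose, in a saturated elementary extension, elements $c_i\in H_0$ and $d_j\in H_1$ ($i,j<\omega$) lying in pairwise distinct cosets of $H$; let $\phi_0(x,y)$ be the formula $x-y\in H_1$ and $\phi_1(x,y)$ the formula $x-y\in H_0$. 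These, with parameter rows $(c_i)_i$ and $(d_j)_j$, form a depth-$2$ ict-pattern: for any $\eta=(i^\ast,j^\ast)$ the element $x:=c_{i^\ast}+d_{j^\ast}$ realises the corresponding row, since $x-c_{i^\ast}=d_{j^\ast}\in H_1$ and $x-d_{j^\ast}=c_{i^\ast}\in H_0$, while for $i\ne i^\ast$ we have $x-c_i=(c_{i^\ast}-c_i)+d_{j^\ast}$ with $c_{i^\ast}-c_i\in H_0$ but $c_{i^\ast}-c_i\notin H$ (distinct cosets), hence $c_{i^\ast}-c_i\notin H_1$, and as $d_{j^\ast}\in H_1$ this forces $x-c_i\notin H_1$; symmetrically $x-d_j\notin H_0$ for $j\ne j^\ast$.

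It therefore suffices to produce such $H_0,H_1$. For $H_0$ I take $R$. Since $R\not\subseteq\Oo$, fix $a\in R$ with $v(a)<0$. The powers $a^n$ lie in $R$, have pairwise distinct negative valuations $n\,v(a)$, and $v(a^n-a^m)=\min(n,m)\,v(a)<0$ for $n\ne m$; so the $a^n$ lie in distinct cosets of $R\cap\Oo$, giving $[R:R\cap\Oo]=\infty$. For $H_1$ I want to take $\Oo$. Writing $S:=R\cap\Oo$, everything is thus reduced to proving $[\Oo:S]=\infty$.

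This is the crux, and the step I expect to be the main obstacle. Observe that $(K,\Oo)$ is a reduct of $M$, hence a dp-minimal valued field, so $\Oo$ is a dp-minimal valuation ring; by Fact~\ref{dpmvr} it is henselian and defectless, $\Gamma$ is dp-minimal (and if $\Gamma$ is trivial then $\Oo=K\supseteq R$ and we are done), and $[-v(p),v(p)]\subseteq\Gamma$ satisfies condition~(3) there. Suppose $[\Oo:S]$ were finite. Then the image $\bar S$ of $S$ in the residue field $k$ is a subring of finite additive index, so $\bar S=k$ when $k$ is infinite (an infinite field has no proper subring of finite additive index: such a subring has the same fraction field, over which the field is a finitely generated --- hence integral --- module, forcing the subring to be a field, and a subfield of finite additive index of an infinite field is the whole field). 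When $k$ is finite, $\Oo$ is of mixed characteristic and finitely ramified, so $p^N\Oo\subseteq S$ for some $N$ (the prime-to-$p$ part of $[\Oo:S]$ being a unit in $\Oo$); then $R$ contains $a^k p^N\Oo=\{x:v(x)\ge N\,v(p)+k\,v(a)\}$ for all $k$, and since $v(p)$ lies in the least nonzero convex subgroup of $\Gamma$ while $v(a)<0$, the union over $k$ of these sets contains $\Oo$ --- contradicting $\Oo\not\subseteq R$. In the remaining case ($k$ infinite, $\bar S=k$) one must descend: there is $\mu\in\mathfrak{m}\setminus R$, and running the same analysis through the successive residue quotients $\pi^n\Oo/\pi^{n+1}\Oo$ should show that either $R$ contains a nonzero ideal of $\Oo$ --- whence $R\supseteq\Oo$ (or a coarsening of $\Oo$, hence $\supseteq\Oo$), a contradiction as before --- or the descent never terminates and yields $[\Oo:S]=\infty$ after all. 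Making this descent rigorous is the delicate part: once $R$ surjects onto $k$ and onto all the deeper residue quotients there is no finite-dimensional obstruction left, and one must exploit the finer structure of $\Oo$ (henselianity and defectlessness, $p$-divisibility of $[-v(p),v(p)]$, dp-minimality of $\Gamma$). Alternatively one can try to invoke the dichotomy that a definable additive subgroup of a dp-minimal valued field is open or bounded: $R$ is unbounded (it contains the $a^n$), hence open, hence contains a ball, and a subring containing a ball together with an element of negative valuation contains $\Oo$ --- or else simply adapt the proof of \cite[Corollary~5.6]{dp-min-domains}, of which the present statement is essentially a mild generalisation.
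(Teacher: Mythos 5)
There is a genuine gap, and it sits exactly where you said it would. First, for context: the paper does not reprove this statement at all --- it is quoted as a Fact and justified by citing \cite[Corollary~5.6]{dp-min-domains}, with the remark that purity of $R$ is inessential. So your fallback ``adapt the proof of Corollary~5.6'' is precisely the paper's move, not a proof. The parts of your argument that are complete are fine: the depth-$2$ ict-pattern criterion is correct (it is essentially Fact~\ref{almost-comparable}, i.e.\ \cite[Proposition~4.5(2)]{CKS}), and the observation that $R\not\subseteq\Oo$ forces $[R:R\cap\Oo]=\infty$ via the powers $a^n$ is correct (modulo the slip $v(a^n-a^m)=\max(n,m)v(a)$, not $\min$, which does not matter). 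But note that by Fact~\ref{almost-comparable} the two indices can never \emph{both} be infinite in a dp-minimal structure, so your whole strategy hinges on the one remaining claim: if $[\Oo:R\cap\Oo]<\infty$ and $R\not\subseteq\Oo$, then $\Oo\subseteq R$. You establish this only when the residue field of $\Oo$ is finite, or when the index is a unit of $\Oo$; the infinite-residue-field case in residue characteristic $p$ is exactly the content of the statement, and there you only say the descent ``should show'' the conclusion and that making it rigorous ``is the delicate part.''

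That remaining case is not routine, and your proposed shortcuts fail as stated. In equal characteristic $p$ the exponent trick gives $n\Oo=0$, so finite additive index yields no ideal of $\Oo$ inside $R$ at all. In mixed characteristic with infinite residue field, Fact~\ref{dpmvr} no longer forces $v(p)$ into the minimal convex subgroup, so $p^N\Oo\subseteq R$ together with $a\in R$, $v(a)<0$, does not let you absorb $\Oo$: if $-v(a)$ is infinitesimal relative to $v(p)$ (rank $>1$ value group) the sets $a^kp^N\Oo$ never cover $\Oo$. The same obstruction kills the ``open or bounded'' alternative: the purely algebraic claim that a subring containing a ball and an element of negative valuation must contain $\Oo$ is false. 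For instance, in $K=\Qq_3((t))$ with $\Oo$ the rank-$2$ composition of the $t$-adic and $3$-adic valuations, the subring $R=\Zz[1/3]+t\Qq_3[[t]]$ contains the ball $t\Oo$ and the element $1/3$ of negative value, yet $-1/2=(1-3)^{-1}\in\Oo\setminus R$. (Of course $(K,R,\Oo)$ there is not dp-minimal; the point is that dp-minimality must enter the infinite-residue-field case in an essential way, e.g.\ through an analysis of the finite-index subring $R\cap\Oo$ along the graded pieces of the valuation filtration, and your sketch does not supply that argument.) So the proposal, as written, proves the statement only in special cases and leaves the crux open.
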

This is essentially  \cite[Corollary 5.6]{dp-min-domains}.  Again, purity of the structure $R$ is inessential.

\subsection{00-connectedness of prime ideals}
\label{ss:00-connected}
For any type-definable group $G$ in a sufficiently saturated NIP structure we let $G^{00}$ be its 00-connected component \cite[Section 8.1.3]{Sim15}.
\begin{remark} \label{g00-facts}
    Suppose $G$ is an NIP group, possibly with additional structure.  The following facts are well-known:
\begin{itemize}
      \item If $G$ is sufficiently saturated, then $G^{00}$ is definable if and
      only if $G^{00}$ has finite index in $G$.
      \item If $G$ is sufficiently saturated and $\widehat G \succ G$ is sufficiently saturated, then $G^{00}$ is definable if and only if $(\widehat G)^{00}$ is definable.
      \item We can talk about ``$G^{00}$ is definable'' without
      assuming saturation, by going to a sufficiently saturated elementary
        extension.  The choice of the sufficiently saturated elementary extension
        does not matter by the previous point.
        \item If $G \equiv H$, then $G^{00}$ is definable if and only if $H^{00}$ is definable.
      \item If $G^{00}$ is definable and $H$ is a reduct of $G$ then $H^{00}$ is definable. Note that the converse fails, though---it
        can happen that $G^{00}$ is not definable, but becomes
        definable in a reduct.  For example, if $G$ is the circle
        group in RCF, then $G^{00}$ is not definable, but it becomes
        definable in the pure group reduct.
      \end{itemize}
      More generally, analogous results hold when $G$ is a definable group in an NIP structure $M$.
\end{remark}

      \textbf{From now on, whenever $G$ is some NIP expansion of a group, by ``$G^{00}$ is definable'' we mean that that $(\widehat G)^{00}$ is definable for some/every sufficiently saturated elementary extension $\widehat G\succ G$.}

\begin{fact}[{\cite[proof of Lemma 2.6]{JohDpfiniteIa}}] \phantomsection\label{F:joh19}
\begin{enumerate}
    \item Let $R$ be a sufficiently saturated NIP integral domain and $I\trianglelefteq R$ a type-definable ideal. Then $I^{00}$ is also an ideal of $R$. 
    \item Let $R$ be a sufficiently saturated  NIP local domain with maximal ideal $\mm$. If $R/\mm$ is infinite then $\mm=\mm^{00}$ and more generally, $I = I^{00}$ for any definable ideal $I$.
\end{enumerate}
\end{fact}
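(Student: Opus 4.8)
The plan is to treat the two parts in order: part~(1) is a general statement about $00$-components pushed through definable homomorphisms, while part~(2) is a counting argument exploiting the abundance of units in a local ring with infinite residue field.

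\emph{Part (1).} Recall that $I^{00}$ is, by definition, the smallest type-definable subgroup of $(I,+)$ of bounded index (it is type-definable over a small parameter set, and does not depend on that set because $R$ is NIP). Fix $r\in R$ and let $\mu_r\colon I\to I$ be multiplication by $r$, a definable endomorphism of $(I,+)$. Then $\mu_r^{-1}(I^{00})$ is again a type-definable subgroup of $(I,+)$ over a small set, and $\mu_r$ induces an injection $I/\mu_r^{-1}(I^{00})\hookrightarrow I/I^{00}$, so $\mu_r^{-1}(I^{00})$ has bounded index in $I$. By minimality of $I^{00}$ we get $I^{00}\subseteq\mu_r^{-1}(I^{00})$, that is, $rI^{00}\subseteq I^{00}$. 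Since $r$ was arbitrary, $I^{00}$ is an ideal of $R$.

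\emph{Part (2).} Since $\mm=R\setminus R^\times$ and $R^\times$ is $\emptyset$-definable, $\mm$ is a definable ideal, so it suffices to show $I=I^{00}$ for an arbitrary definable ideal $I$; applying this to $I=\mm$ then gives $\mm=\mm^{00}$. Put $\lambda:=[I:I^{00}]$, a bounded cardinal. Since $R$ is sufficiently saturated and $R/\mm$ is infinite, $|R/\mm|\ge\lambda^+$, so we may choose $(a_i:i<\lambda^+)$ in $R$ pairwise non-congruent modulo $\mm$; as $R$ is local, $a_i-a_j\in R^\times$ whenever $i\neq j$. Suppose toward a contradiction that $b\in I\setminus I^{00}$. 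By part~(1) the set $I^{00}$ is an ideal, so each $a_ib$ lies in $I$; and if $a_ib\equiv a_jb\pmod{I^{00}}$ for some $i\neq j$, then $(a_i-a_j)b\in I^{00}$, hence $b=(a_i-a_j)^{-1}\cdot(a_i-a_j)b\in I^{00}$, contradicting the choice of $b$. Therefore $\{a_ib+I^{00}:i<\lambda^+\}$ consists of $\lambda^+$ distinct elements of $I/I^{00}$, contradicting $[I:I^{00}]=\lambda$. Hence $I=I^{00}$.

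\emph{Main difficulty.} The argument is largely routine; the points needing care are the standard NIP bookkeeping in part~(1)---that $I^{00}$ does not grow when the small parameter set is enlarged to include $r$, so that it really is contained in $\mu_r^{-1}(I^{00})$---and, in part~(2), arranging the saturation of $R$ to exceed the uniform bound on $[\mm:\mm^{00}]$, so that ``$R/\mm$ infinite'' upgrades to ``$|R/\mm|>[\mm:\mm^{00}]$''. The essential ring-theoretic input is the observation that translating a hypothetical nonzero class in $I/I^{00}$ by a large family of units produces too many cosets; this is precisely where infiniteness of the residue field enters, and the conclusion genuinely fails when $R/\mm$ is finite---which is, after all, the phenomenon the paper is about.
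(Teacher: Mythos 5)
Your proof is correct: in part (1) the pullback argument via $\mu_r^{-1}(I^{00})$, using that $I^{00}$ is unchanged when the parameter set is enlarged to include $r$ (valid in NIP), and in part (2) the counting argument—scaling a hypothetical $b\in I\setminus I^{00}$ by representatives $a_i$ that pairwise differ by units, which exist in sufficient number because $\mm$ is definable, $R/\mm$ is infinite, and the saturation exceeds the bound on $[I:I^{00}]$—are both sound, including the reduction of $\mm=\mm^{00}$ to the definable-ideal case via $\mm=R\setminus R^\times$. The paper does not prove this Fact itself (it cites the proof of Lemma~2.6 in the referenced dp-finite fields paper), and your argument is essentially the same standard one: units act on $I/I^{00}$ because $I^{00}$ is an ideal, and an infinite interpretable residue field in a sufficiently saturated model forces more cosets than the bounded index allows.
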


We prove a similar result for $\vee$-definable valuation rings.

\begin{proposition} \label{P:00-conn}
Let $K$ be a sufficiently saturated NIP field, $\Oo$ some $\vee$-definable valuation subring and let $\mm$ be its maximal ideal.  Then $\mm$ is type-definable and unless $\mm$ is principal and $\Oo/\mm$ is finite,   $\mm^{00} = \mm$.
\end{proposition}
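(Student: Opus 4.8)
Here is how I would approach it.

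\textbf{Step 1: $\mm$ is type-definable.} For nonzero $x\in K$ one has $x\in\mm$ iff $x^{-1}\notin\Oo$, so $\mm=\{0\}\cup\{x\in K : x\neq 0,\ x^{-1}\in K\setminus\Oo\}$. Since $\Oo$ is $\vee$-definable, $K\setminus\Oo$ is type-definable, and pulling back along the definable map $x\mapsto x^{-1}$ shows $\mm$ is type-definable (over the parameter set over which $\Oo$ is $\vee$-definable). In particular $\mm$ is a type-definable abelian group, so $\mm^{00}$ makes sense, and since $K$ is NIP the index $[\mm:\mm^{00}]$ is bounded. I also record the routine compactness fact that a set which is simultaneously $\vee$-definable and type-definable over small sets is definable; this will be used via its contrapositive.

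\textbf{Step 2: $\mm^{00}$ is an ideal of $\Oo$, and what that buys.} This is the argument behind Fact~\ref{F:joh19}(1). For $0\neq r\in\Oo$, the additive automorphism $x\mapsto r^{-1}x$ of $(K,+)$ carries $\mm^{00}$ to a type-definable subgroup $r^{-1}\mm^{00}$ of $r^{-1}\mm$ of the same bounded index as $\mm^{00}$ in $\mm$; since $r\in\Oo$ gives $r\mm\subseteq\mm$, i.e.\ $\mm\subseteq r^{-1}\mm$, the set $\mm\cap r^{-1}\mm^{00}$ is a bounded-index type-definable subgroup of $\mm$, hence contains $\mm^{00}$, i.e.\ $r\mm^{00}\subseteq\mm^{00}$. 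So $\mm^{00}$ is an ideal of $\Oo$, hence (valuation ideals being initial segments for $v$) equals $\{x:v(x)>\gamma\}$ or $\{x:v(x)\ge\gamma\}$ for some cut $\gamma$. If $\mm^{00}\neq\mm$, then choosing $x\in\mm\setminus\mm^{00}$ and using convexity of $v(\mm^{00})$ gives $\mm^{00}\subseteq\{y:v(y)>\delta\}$ with $\delta:=v(x)\in\Gamma_{>0}$; as $\mm^{00}\subseteq\{v>\delta\}\subseteq\mm$, the index $[\mm:\{v>\delta\}]$ is then bounded. The remaining work is to contradict this whenever $\mm$ is non-principal or $\Oo/\mm$ is infinite.

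\textbf{Step 3: split on whether $\mm$ is principal.} If $\mm=t\Oo$ is principal, with $v(t)$ the least element of $\Gamma_{>0}$, then $\Oo=t^{-1}\mm$ is type-definable, hence definable by Step 1, so $k=\Oo/\mm$ is interpretable; moreover $\{v>\delta\}\subseteq\{v>v(t)\}=\mm^2$, so $[\mm:\mm^2]\le[\mm:\mm^{00}]$ is bounded while $[\mm:\mm^2]=|\Oo/\mm|=|k|$ via $x\mapsto t^{-1}x$. Since an infinite interpretable set is unbounded, $k$ must be finite --- exactly the excluded case. If $\mm$ is not principal, i.e.\ $\Gamma_{>0}$ has no least element, then $(0,\delta)\cap\Gamma$ is infinite, and I claim $[\mm:\{v>\delta\}]$ is unbounded, which is the desired contradiction. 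It suffices to produce unboundedly many elements of $K^\times$ with pairwise distinct values in $(0,\delta)$, since such elements have pairwise differences of value $<\delta$ and hence lie in distinct cosets of $\{v>\delta\}$; I would build these by recursion using saturation, noting that (fixing $t_\delta\in K^\times$ with $v(t_\delta)=\delta$) the conditions ``$a\in\mm$'', ``$t_\delta/a\in\mm$'' (encoding $v(a)<\delta$) and ``$a/s\in K^\times\setminus\Oo^\times$'' for the finitely many previously chosen $s$ (encoding $v(a)\neq v(s)$) form a consistent partial type, all three being type-definable.

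\textbf{Expected main obstacle.} The delicate point is this last quantitative step in the $\vee$-definable non-principal case: one would like to invoke ``an infinite interpretable set is unbounded'', but $\Gamma$ and $k$ need not be interpretable when $\Oo$ is merely $\vee$-definable. What rescues it is precisely Step 1: $\mm$ is \emph{always} type-definable and $\Oo^\times$ always has type-definable complement, so the conditions one needs are genuine partial types and compactness applies directly; and the principal case is isolated exactly because there $\Oo=t^{-1}\mm$ is forced to be definable, which is where interpretability of $k$ comes from.
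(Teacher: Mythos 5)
Your proposal is correct and follows essentially the same route as the paper: the same observation that $x\in\mm\iff x^{-1}\notin\Oo$ gives type-definability, the same case split on whether the value group has a least positive element, and, in the non-principal case, the same saturation argument producing a long sequence of elements of pairwise distinct positive values below $v(x)$ for some $x\in\mm\setminus\mm^{00}$ (you count cosets of the ball $\{y:v(y)>\delta\}$ directly, whereas the paper counts subgroups in the chain $a_i\Oo$ between $\mm^{00}$ and $\mm$ --- a cosmetic difference). The only other deviation is the principal case, where the paper notes that $\Oo$ and $\mm$ become definable and cites Fact~\ref{F:joh19}(2), while you give a short self-contained substitute via $\mm/\mm^2\cong\Oo/\mm$ and boundedness of $[\mm:\mm^{00}]$; both work.
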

\begin{proof}
  For non-zero $x\in K$, we have $x \in \mm \iff x^{-1} \notin \Oo$ so $\mm$ is type-definable and $\mm^{00}$ is an  ideal in $\Oo$. Let $v$ be the valuation on $K=\Frac (\Oo)$ associated to $\Oo$.

  Now, break into two cases:
  \begin{description}
  \item[Case 1:] There is a minimal positive element in the value
    group of $\Oo$.  Then $\mm$ is a principal ideal $t\Oo$, where $t$
    is a local uniformizer.  The fact that $\mm = t\Oo$  means that $\mm$ is $\vee$-definable. It follows that $\Oo$ and $\mm$ are definable.  As $\Oo/\mm$ is infinite in this situation, $\mm = \mm^{00}$ by Fact~\ref{F:joh19}(2).
  \item[Case 2:] There is no minimal positive element in the value
    group of $\Oo$.  Suppose for the sake of contradiction that $\mm
    \neq \mm^{00}$.  Take an element $a \in \mm \setminus \mm^{00}$, so $v(a) > 0$.  If $x \in \mm^{00}$, then $x\Oo \subseteq \mm^{00}$ so $a \notin x\Oo$, implfying that $x \in a\Oo$.  This shows that $\mm^{00} \subseteq a\Oo$.
    
    Because there is no minimal positive element in the value group,
    we can find $a_1$ with $0 < v(a_1) < v(a)$.  Then find $a_2$ with
    $0 < v(a_2) < v(a_1)$.  Continuing in this way, we can find a
    sequence
    \begin{equation*}
      a = a_0, a_1, a_2, a_3, \ldots
    \end{equation*}
    with $v(a_0) > v(a_1) > v(a_2) > \cdots > 0$.  Equivalently, there is
    a sequence $a_0, a_1, a_2, \ldots$ with
    \begin{itemize}
    \item $a_0 = a$
    \item $a_i/a_j \in \mm$ for $i < j$
    \item $a_i \in \mm$ for all $i$
    \end{itemize}
    Since $\mm$ is type-definable, these conditions are
    type-definable, and we can use compactness to get a long sequence $\{a_i\}_{i < \kappa}$ satisfying the conditions above.  We get a very long
    ascending chain of subgroups between $\mm^{00}$ and $\mm$:
    \begin{equation*}
      \mm^{00} \subseteq a_0\Oo \subsetneq a_1\Oo \subsetneq a_2\Oo
      \subsetneq \cdots \subseteq \mm.
    \end{equation*}
    But the total number of abstract groups between $\mm^{00}$ and
    $\mm$ is $2^{|\mm/\mm^{00}|}$, so by taking $\kappa >
    2^{|\mm/\mm^{00}|}$ we get a contradiction. \qedhere
  \end{description}
\end{proof} 

\begin{corollary}\label{C:prime-conn}
    Let $R$ be a sufficiently saturated dp-minimal domain. For any non-maximal type-definable prime ideal $\pp$, $R_\pp$ is a henselian valuation ring with maximal ideal $\pp$ and it satisfies $\pp^{00}=\pp$.
\end{corollary}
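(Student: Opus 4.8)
The plan is to localize at $\pp$ and deduce the statement from Proposition~\ref{P:00-conn}. Write $K = \Frac(R)$; since $R$ is sufficiently saturated and $(R,+,\cdot,\dots)$ is interdefinable with $(K,R,+,\cdot,\dots)$, $K$ is a sufficiently saturated NIP field. Recall from commutative algebra that $R_\pp$ is local with maximal ideal $\pp R_\pp$ and residue field $R_\pp/\pp R_\pp\cong\Frac(R/\pp)$. Now $R/\pp$ is an integral domain (as $\pp$ is prime), and it is infinite: a finite integral domain is a field, which would force $\pp$ to be maximal. Hence the residue field of $R_\pp$ is infinite. Finally, $R\setminus\pp$ is the complement of a type-definable set, so $R_\pp=\bigcup_{s\in R\setminus\pp}s^{-1}R$ is $\vee$-definable in $K$; explicitly, $K\setminus R_\pp=\{x\in K:\forall s\,((s\in R\wedge sx\in R)\to s\in\pp)\}$ is type-definable.

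Next I would show that $R_\pp$ is a henselian valuation ring. By Fact~\ref{F:prime ideals are ext}, $\pp$ is externally definable, hence definable in the Shelah expansion $R^{Sh}$, which is dp-minimal by Fact~\ref{external-dp}; therefore $R_\pp$ is definable in $R^{Sh}$, and by Fact~\ref{F:K-R-rank} it has dp-rank $1$, so $(R_\pp,+,\cdot)$ is a dp-minimal integral domain. Being local with infinite residue field, it is a valuation ring by the criterion for dp-minimal domains proved in \cite{dp-min-domains} and recalled in the introduction, and it is henselian by Fact~\ref{dpmvr}(2). (As is customary we also denote by $\pp$ the maximal ideal $\pp R_\pp$ of $R_\pp$; it contracts to $\pp$ in $R$.)

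It then remains to apply Proposition~\ref{P:00-conn} to the $\vee$-definable valuation subring $R_\pp$ of $K$: since its residue field $\Frac(R/\pp)$ is infinite, the exceptional case (maximal ideal principal and residue field finite) is excluded, so $\pp^{00}=\pp$. The only real content lies in the middle step, where one passes to the Shelah expansion precisely so as to make $\pp$, and hence $R_\pp$, definable, transferring dp-minimality via Fact~\ref{F:K-R-rank}, and then invokes the valuation-ring criterion for dp-minimal domains; once $R_\pp$ is known to be a valuation ring, the conclusion is immediate from Proposition~\ref{P:00-conn} (for the $00$-connectedness alone one could instead quote Fact~\ref{F:joh19}(2) directly).
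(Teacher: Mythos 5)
Your route to the statement ``$R_\pp$ is a henselian valuation ring'' is genuinely different from the paper's: the paper simply quotes \cite[Proposition 3.8, Theorem 3.9]{dp-min-domains}, whereas you pass to the Shelah expansion to make $\pp$ (hence $R_\pp$) definable, transfer dp-rank via Fact~\ref{F:K-R-rank}, and then invoke the valuation-ring criterion for dp-minimal local domains together with Fact~\ref{dpmvr}(2). That part of the argument, and your verification that $R_\pp$ is $\vee$-definable (via the type-definable complement) with infinite residue field so that Proposition~\ref{P:00-conn} applies, is fine.

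However, there is a genuine gap at the point you dismiss as ``customary'' notation: the corollary asserts that the maximal ideal of $R_\pp$ \emph{is} $\pp$ itself, and the conclusion $\pp^{00}=\pp$ concerns the original type-definable prime $\pp\subseteq R$. Your argument only identifies the maximal ideal of $R_\pp$ as $\pp R_\pp$ and notes that it contracts to $\pp$; it never shows $\pp R_\pp=\pp$, equivalently that $\pp$ is an ideal of $R_\pp$, equivalently that $\pp\subseteq sR$ for every $s\in R\setminus\pp$. In a general domain this fails badly (for $R=\Zz$ and $\pp=p\Zz$ one has $\pp R_\pp\supsetneq\pp$), so it is a substantive fact about dp-minimal domains, and it is precisely (part of) what the cited \cite[Proposition 3.8, Theorem 3.9]{dp-min-domains} provides and what your rederivation does not recover. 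Without it, Proposition~\ref{P:00-conn} (or Fact~\ref{F:joh19}(2), which in any case requires saturation of $R_\pp$ and definability of the ideal, neither of which you have) only yields $(\pp R_\pp)^{00}=\pp R_\pp$, which does not imply $\pp^{00}=\pp$: from $\pp\subseteq\pp R_\pp$ one only gets $\pp^{00}\subseteq\pp\subseteq(\pp R_\pp)^{00}$. So either quote the result of \cite{dp-min-domains} for the full statement ``valuation ring with maximal ideal $\pp$,'' as the paper does, or supply an argument that $\pp\subseteq sR$ for all $s\notin\pp$; as written, the proof establishes a weaker statement than the corollary.
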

\begin{proof}
    The fact that $R_\pp$ is a henselian valuation ring with maximal ideal $\pp$ is \cite[Proposition 3.8, Theorem 3.9]{dp-min-domains}. To show connectedness we use Proposition \ref{P:00-conn}: $R_\pp$ is $\vee$-definable by $a\in R_\pp \iff a^{-1}\notin \pp$ so we only need to show that $R_\pp/\pp$ is not finite. Otherwise, $R/\pp$  injects into the field $R_\pp/\pp$ so  the former is a finite integral domain, i.e., a field, which which contradicts non-maximality of $\pp$.
\end{proof}

If $R$ is dp-minimal (or inp-minimal) and $a \in R$ is non-zero, then
there is a unique maximal prime ideal $P_a$ such that $a \notin P_a$
by \cite[Lemma~3.3]{dp-min-domains}.
\begin{corollary} \label{C:Pa-conn}
  If $R$ is a sufficiently saturated dp-minimal domain and $a \in R$
  is non-zero and not a unit, then $P_a$ is a non-maximal
  type-definable prime ideal, and $P_a^{00} = P_a$.
\end{corollary}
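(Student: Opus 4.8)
The plan is to write down an explicit type-definition of $P_a$, note that it is non-maximal, and then invoke Corollary~\ref{C:prime-conn}.

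First I would show that
\[
  P_a = \{y \in R : y \nmid a^n \text{ for all } n \geq 1\} = \bigcap_{n \geq 1}\{y \in R : y \nmid a^n\}.
\]
The inclusion $\subseteq$ is easy: if $y \in P_a$ and $y \mid a^n$, then $a^n \in (y) \subseteq P_a$, whence $a \in P_a$ because $P_a$ is prime, contradicting $a \notin P_a$. For $\supseteq$, if $a^n \notin (y)$ for every $n$, then $a \notin \sqrt{(y)}$, so (as $\sqrt{(y)}$ is the intersection of the primes containing $(y)$) there is a prime $\pp \supseteq (y)$ with $a \notin \pp$; since the prime ideals of $R$ are linearly ordered \cite[Corollary~2.5]{dp-min-domains}, the primes avoiding $a$ form a chain whose largest element is $P_a$, so $(y) \subseteq \pp \subseteq P_a$ and $y \in P_a$. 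Each set $\{y \in R : y \nmid a^n\}$ is definable (it is $\{y \in R : \neg \exists z\,(yz = a^n)\}$), so this displays $P_a$ as a countable intersection of definable sets, i.e.\ as type-definable.

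Next I would observe that $P_a$ is non-maximal: since $a$ is a non-zero non-unit, $R$ is not a field, hence is local with maximal ideal $\mm \ni a$; as $a \notin P_a$ we get $P_a \neq \mm$, and because the primes form a chain topped by $\mm$ this forces $P_a \subsetneq \mm$. Finally, applying Corollary~\ref{C:prime-conn} to the non-maximal type-definable prime ideal $P_a$ yields that $R_{P_a}$ is a henselian valuation ring with maximal ideal $P_a$ and, in particular, $P_a^{00} = P_a$, completing the proof. The only point that requires any work is the explicit type-definition of $P_a$ in the first step; everything else is immediate.
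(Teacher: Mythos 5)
Your proof is correct. The overall skeleton is the same as the paper's (show $P_a$ is a non-maximal type-definable prime, then quote Corollary~\ref{C:prime-conn}), but you verify type-definability by a different, more hands-on route: you exhibit the explicit type-definition $P_a = \bigcap_{n\ge 1}\{y \in R : y \nmid a^n\}$, using only that $\sqrt{yR}$ is the intersection of the primes above $yR$ and that every prime avoiding $a$ sits below $P_a$. The paper instead observes that the localization $R_{P_a}$ equals $S^{-1}R$ for $S=\{a^n : n\in\Nn\}$ (\cite[Remark~3.4]{dp-min-domains}), hence is $\vee$-definable as $\bigcup_n a^{-n}R$, and then uses that the maximal ideal of a $\vee$-definable valuation ring is type-definable (as in Proposition~\ref{P:00-conn}). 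Unwinding the paper's argument ($y\in P_a$ iff $y=0$ or $y^{-1}\notin a^{-n}R$ for all $n$) produces exactly your divisibility condition, so the two type-definitions coincide; your version has the small advantage of not needing the valuation-ring structure of $R_{P_a}$ or \cite[Remark~3.4]{dp-min-domains} for this step (those inputs are only used inside Corollary~\ref{C:prime-conn}), at the cost of a short commutative-algebra computation with radicals. Your non-maximality argument and the final appeal to Corollary~\ref{C:prime-conn} match the paper.
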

\begin{proof}
  Since $a$ is not a unit, it is in the maximal ideal, so $P_a$ must
  be non-maximal.  By \cite[Proposition 3.8, Theorem
    3.9]{dp-min-domains}, $R_{P_a}$ is a valuation ring with maximal
  ideal $P_a$.  By \cite[Remark~3.4]{dp-min-domains}, $R_{P_a}$ is the
  localization $S^{-1}R$ with $S = \{a^n : n \in \Nn\}$.  Then
  $R_{P_a} = \bigcup_{n = 0}^\infty a^{-n}R$, so $R_{P_a}$ is
  $\vee$-definable.  The maximal ideal of a $\vee$-definable valuation
  ring is type-definable, so $P_a$ is type-definable.  The
  00-connectedness of $P_a$ then follows by
  Corollary~\ref{C:prime-conn}.
\end{proof}

\subsection{External definability of $\vee$-definable valuation rings and $R^{00}$}
\label{ss:ext-def}

\begin{proposition}\label{P:vee valuation ring is ext def}
    Let $\Oo$ be a $\vee$-definable valuation subring of some sufficiently saturated field $K$. Then $\Oo$ is externally definable.  If $K$ is NIP, then every ideal of $\Oo$ is externally definable.
\end{proposition}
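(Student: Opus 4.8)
The plan is to produce a single externally definable set whose trace on $K$ is exactly $\Oo$, and then leverage NIP to upgrade from ``$\Oo$ is externally definable'' to ``every ideal of $\Oo$ is externally definable.'' Write $\Oo = \bigcup_{i} D_i$ as an increasing union of definable sets, and let $v : K^\times \to \Gamma$ be the associated valuation. The key observation is that a $\vee$-definable valuation ring is, by compactness in a saturated model, exactly the set of $x$ with $v(x) \geq \gamma_0$ for some fixed element $\gamma_0$ in an elementary extension $\Gamma' \succ \Gamma$ of the value group realizing the cut cut out by $\{v(d) : d \in D_i\}$; indeed $\Oo$ being a valuation ring forces $v(\Oo)$ to be the nonnegative part of a convex subgroup complement, i.e. an up-set in $\Gamma$, and saturation of $K$ pins it down as such a cut. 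Equivalently, $x \in \Oo \iff x = 0 \lor v(x) \geq \gamma_0$, and the relation $v(x) \geq \gamma_0$ on elements of $K$ is externally definable: it is the trace on $K$ of a formula with parameters from the elementary extension $(K', v')$ of $(K, v)$ containing a witness for $\gamma_0$.

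First I would make this precise. Pass to a monster model $(K^*, v^*) \succ (K, v)$; since $K$ is already sufficiently saturated, the cut in $\Gamma$ defined by the $\vee$-definition of $\Oo$ is not realized in $\Gamma$ but is realized by some $\gamma_0 \in \Gamma^* = v^*((K^*)^\times)$. Then for $x \in K$ one has $x \in \Oo$ iff $x = 0$ or $v(x) < \gamma_0$ is false, i.e. $x \in \Oo \iff x = 0 \lor \neg(v^*(x) < \gamma_0)$; picking $c \in K^*$ with $v^*(c) = \gamma_0$, the condition ``$v^*(x) \geq v^*(c)$'' is just ``$c \mid x$ in $\Oo^*$'', which is the $L$-formula $\exists y\, (x = c y \land y \in \Oo^*)$ — but we need this expressed without reference to $\Oo^*$. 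The cleaner route: $v^*(x) \geq \gamma_0 \iff x/c \in \Oo^*$, and membership in $\Oo^*$ of a quotient is awkward, so instead I would observe that on $K$ itself the external trace we want is precisely $\{x \in K : \forall d \in \Oo\ (d\ \text{``small enough''}) \Rightarrow \text{something}\}$. Actually the simplest correct formulation: by saturation of $K$, for each $x \in K \setminus \Oo$ there is a definable (in $K$) set separating $x$ from all of $\Oo$ — but $\Oo$ being $\vee$-definable and $K$ saturated already gives that $\Oo$ is the complement of a $\vee$-definable set (namely $\{x : x^{-1} \in \Oo \setminus \text{units}\}$ is type-definable), hence $\Oo$ is both $\vee$- and $\wedge$-definable, and a set that is both $\vee$-definable and type-definable over $K$ is externally definable — this is the standard fact that a set equal to a union of definable sets and also to an intersection of definable sets is the trace of an honest formula in an elementary extension (heir/coheir argument: take $p$ a coheir of the partial type over $K$, realized by $c$ in $K^*$; then ``$x \in$ [the set]'' is captured on $K$ by a formula from $p$).

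The main obstacle is exactly this last step — exhibiting the single external formula — and I would handle it via the following standard lemma, which I would state and prove inline: \emph{if $X \subseteq K^n$ is $\vee$-definable and $K^n \setminus X$ is also $\vee$-definable (equivalently $X$ is type-definable), and $K$ is $|L(K)|^+$-saturated, then $X$ is externally definable.} Proof of the lemma: write $X = \bigcup_i \phi_i(K, a)$ and $K^n \setminus X = \bigcup_j \psi_j(K, a)$. By saturation and the fact that these cover $K^n$ disjointly, the type $\Sigma(y) = \{\forall x\,(\phi_i(x,a) \to \chi(x,y)) : i\} \cup \{\forall x\, (\psi_j(x,a) \to \neg\chi(x,y)) : j\}$ for a suitable ``indicator'' formula $\chi$ is consistent over $K$ — concretely, take $\chi(x, y)$ expressing ``$x$ is in the definable set coded by $y$'' in a suitable sort, and use that $X$, being an increasing union of definable sets that misses the increasing union $K^n \setminus X$, is ``approximable'': every finite condition is met by some $\phi_i(x,a)$ with $i$ large. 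Then a realization $b \in K^*$ of $\Sigma$ gives $\chi(K, b) = X$. Finally, once $\Oo$ is externally definable, NIP enters: in the Shelah expansion $K^{Sh}$ (which is NIP by Fact referenced in the excerpt), $\Oo$ is definable, so $(K, \Oo)$ embeds as a reduct of an NIP structure; every ideal of $\Oo$ is of the form $\{x : v(x) \,\square\, \delta\}$ for $\square \in \{>, \geq\}$ and $\delta$ in an elementary extension of $\Gamma$, and such a set is externally definable in $(K, \Oo)$ by the same cut-realization argument applied now inside the NIP structure $(K, \Oo)$ (or its Shelah expansion), since $(K,\Oo)$ NIP guarantees the Shelah expansion is again NIP and closed under taking such traces. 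I expect the bookkeeping around ``which structure are we taking external definitions in'' to be the only delicate point; the valuation-theoretic content is routine.
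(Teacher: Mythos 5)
Your proposal does not actually close the main gap, which is to produce a formula in the \emph{original} language of $K$ (with parameters from an elementary extension of $K$) whose trace on $K$ is $\Oo$. Your first route works inside the expanded structure: conditions such as $v^*(x)\ge\gamma_0$ or ``$x/c\in\Oo^*$'' are not formulas in the language of $K$, because the valuation attached to a merely $\vee$-definable $\Oo$ is not definable; realizing the relevant cut in an elementary extension of $(K,\Oo)$ (or of $(K,v)$) only shows that $\Oo$ is externally definable in the structure already expanded by $\Oo$, which is vacuous. You notice this obstacle (``we need this expressed without reference to $\Oo^*$'') but never repair it.

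Your fallback rests on the claim that $\Oo$ is both $\vee$-definable and type-definable, to which you then apply an inline lemma. That claim is false in general, and it fails exactly in the cases of interest. What is true is that $\mm=\Oo\setminus\Oo^\times$ is type-definable, hence $K\setminus\Oo=\{x\ne 0:\ x^{-1}\in\mm\}$ is type-definable --- but this merely re-proves that $\Oo$ is $\vee$-definable; it does not make the complement $\vee$-definable. Moreover, if $\Oo$ were simultaneously $\vee$-definable and type-definable over a small parameter set in a sufficiently saturated $K$, compactness would make $\Oo$ outright definable, contradicting the intended applications (e.g.\ $\Oo=R^{00}:R^{00}$ is typically not definable); so your lemma, even if correct, could never apply non-trivially, and its sketched proof via an ``indicator formula $\chi(x,y)$ coding arbitrary definable sets'' is not available in first-order logic. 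The paper's proof is genuinely different: by compactness choose a definable $D$ with $\mm\subseteq D\subseteq\Oo$, and consider the partial type $\Sigma(x)=\{x\in aD:\ a\notin\Oo\}\cup\{x\notin bD:\ b\in\Oo\}$. If $\Sigma$ is finitely satisfiable, a realization $c$ in an elementary extension exhibits $\Oo$ as the trace of the complement of $cD^{-1}$, hence externally definable; if not, finitely many $a_i\notin\Oo$ and $b_j\in\Oo$ give $\Oo=\bigcap_i a_iD$, so $\Oo$ is even definable. Your treatment of the second statement (pass to the Shelah expansion using NIP, ideals correspond to cuts, cuts are externally definable) matches the paper in outline, but it depends on the first part, which your argument does not establish.
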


\begin{proof}
    As observed in the proof of Proposition \ref{P:00-conn}, the maximal ideal $\mm$ of $\Oo$ is type-definable.  By compactness, there is some definable set $D$ with $\mm \subseteq D \subseteq \Oo$.
    
  Let $\Sigma(x)$ be the set of formulas
  
    \[\Sigma(x) = \{x \in aD : a \notin \Oo\} \cup \{x \notin bD : b \in
    \Oo\}.\]

  First suppose that $\Sigma(x)$ is finitely satisfiable.  Then there
  is an element $c$ realizing $\Sigma(x)$ in an elementary extension
  $N$ of the original model.  Then
  \begin{gather*}
    a \notin \Oo \implies c \in a D(N) \implies a \in c D(N)^{-1} \\
    b \in \Oo \implies c \notin bD(N) \implies b \notin cD(N)^{-1}
  \end{gather*}
  Then $\Oo$ is externally definable as the complement of $M \cap
  cD(N)^{-1}$.

  So we may assume that $\Sigma(x)$ is not finitely satisfiable.  Then
  there are $a_1, \ldots, a_n \notin \Oo$ and $b_1,\ldots,b_m \in \Oo$
  such that
  \begin{equation*}
    \bigcap_{i = 1}^n a_iD \subseteq \bigcup_{i = 1}^m b_iD.
  \end{equation*}
  Note that $b_iD \subseteq \Oo$ for each $i$, because $b_i \in \Oo$
  and $D \subseteq \Oo$.  Conversely, for $v$ the valuation associated to $\Oo$, if $x \in \Oo$ then $v(x) \ge
  0$ and $v(a_i) < 0$ (for each $i$), so $v(x/a_i) > 0$ and $x/a_i \in
  \mm \subseteq D$, implying that $x \in a_iD$.  So $\Oo \subseteq
  a_iD$ for each $i$.  Then
  \begin{equation*}
    \Oo \subseteq \bigcap_{i = 1}^n a_iD \subseteq \bigcup_{i = 1}^m
    b_iD \subseteq \Oo.
  \end{equation*}
  Then equality holds, so $\Oo$ equals the definable set $\bigcap_{i =
    1}^n a_iD$.

    The final statement follows since expanding the structure by the externally definable set $\Oo$ does  not change the set of externally definable sets, assuming NIP. 
    Ideals of definable valuation rings are externally definable.
\end{proof}

\begin{lemma} \label{lem-O-vee-def}
  Let $R$ be a sufficiently saturated dp-minimal domain with fraction field
  $K$. We define
  \[\Oo = R^{00} : R^{00} := \{x\in K:xR^{00}\subseteq R^{00}\} \subseteq K.\]  
  Then $\Oo$ is a valuation ring. Further, if $a \in K$,
  then $a \in \Oo$ if and only if $aR/(aR \cap R)$ is finite. In particular, $\Oo=R^{00}:R^{00}$ is a $\vee$-definable and externally definable valuation ring.
\end{lemma}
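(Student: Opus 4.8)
The plan is to prove the stated equivalence ``$a \in \Oo \iff aR/(aR\cap R)$ finite'' first, and then read off from it that $\Oo$ is a valuation ring, that it is $\vee$-definable, and (via Proposition~\ref{P:vee valuation ring is ext def}) that it is externally definable. For the setup: $K$ is a dp-minimal field, so $\dpr(K) = 1$ (Fact~\ref{F:K-R-rank}); applying Fact~\ref{F:joh19}(1) with the improper ideal $I = R$ shows $R^{00}$ is an ideal of $R$, so $R \cdot R^{00} \subseteq R^{00}$, hence $R \subseteq \Oo$ and $\Oo$ is a subring of $K$ with $\Frac(\Oo) = K$. The one observation I will use repeatedly is that for $a \in K^\times$ multiplication by $a$ is a definable isomorphism of additive groups $(R,+) \xrightarrow{\sim} (aR,+)$, where $aR = \{x \in K : a^{-1}x \in R\}$ is definable; since a definable group isomorphism carries the $00$-component to the $00$-component, $aR^{00} = (aR)^{00}$. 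Note that $aR$, $aR \cap R$, and $aR + R$ are all definable.

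For the equivalence, fix $a \in K^\times$ (the case $a = 0$ is trivial). If $a \in \Oo$, then $(aR)^{00} = aR^{00} \subseteq R^{00} \subseteq R$ and also $(aR)^{00} \subseteq aR$, so $(aR)^{00} \subseteq aR \cap R$; hence the definable subgroup $aR \cap R$ of $aR$ contains $(aR)^{00}$, so it has bounded index in $aR$, so by sufficient saturation it has finite index, i.e. $aR/(aR\cap R)$ is finite. Conversely, if $aR/(aR\cap R)$ is finite then, via the isomorphism $aR/(aR\cap R) \cong (aR+R)/R$, the definable subgroup $R$ has finite index in $M := aR + R$; therefore $M^{00}$ is contained in the finite-index definable subgroup $R$, and a routine index count gives $M^{00} = R^{00}$, while $(aR)^{00} \subseteq M^{00}$ because $aR$ is a subgroup of $M$. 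Combining, $aR^{00} = (aR)^{00} \subseteq M^{00} = R^{00}$, so $a \in \Oo$.

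Now I read off the consequences. From the equivalence and $aR/(aR\cap R) \cong (aR+R)/R$ we get $\Oo = \bigcup_{n \in \Nn}\{a \in K : |(aR+R)/R| \le n\}$, a union of definable sets, so $\Oo$ is $\vee$-definable. To see $\Oo$ is a valuation ring, take $a \in K^\times$ and put $I = aR\cap R$, $M = aR + R$; inside $M/I$ the images of $R$ and $aR$ meet trivially (as $R \cap aR = I$) and span, so $M/I \cong R/I \oplus aR/I$ via a definable isomorphism. Since $M \subseteq K$ we have $\dpr(M) \le \dpr(K) = 1$, hence $\dpr(M/I) \le 1$; but if $R/I$ and $aR/I$ were both infinite, their two coordinate projections would exhibit an ict-pattern of depth $2$ in $R/I \oplus aR/I \cong M/I$, forcing $\dpr(M/I) \ge 2$, a contradiction. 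So $R/I$ or $aR/I$ is finite. By the equivalence, ``$aR/I$ finite'' means ``$a \in \Oo$''; and multiplication by $a^{-1}$ carries $R$ onto $a^{-1}R$ and $aR\cap R$ onto $a^{-1}R\cap R$, giving $R/I \cong a^{-1}R/(a^{-1}R\cap R)$, so ``$R/I$ finite'' means ``$a^{-1} \in \Oo$''. Hence $a \in \Oo$ or $a^{-1} \in \Oo$, so $\Oo$ is a valuation ring. Finally, being a $\vee$-definable valuation subring of the sufficiently saturated field $K$, it is externally definable by Proposition~\ref{P:vee valuation ring is ext def}.

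The step I expect to be the main obstacle is the equivalence: it requires carefully tracking the $00$-components of $R$, of $aR$, and of $aR + R$, together with repeated use of the basic principle that a bounded-index definable subgroup of a sufficiently saturated group has finite index. By contrast the valuation-ring deduction is soft, using only $\dpr(K) = 1$ and the elementary fact that a product of two infinite definable sets has dp-rank at least $2$.
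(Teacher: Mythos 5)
Your proposal is correct, and it reaches the valuation-ring property by a different route than the paper. The equivalence ``$a \in \Oo \iff aR/(aR\cap R)$ is finite'' is handled essentially as in the paper's proof: your forward direction is identical ($aR^{00}=(aR)^{00}\subseteq aR\cap R$, bounded index, hence finite by saturation), and your converse through $M = aR+R$ with $M^{00}=R^{00}$ is a mild variant of the paper's shorter argument, which goes directly via $(aR)^{00}\subseteq aR\cap R\subseteq R$ and $aR^{00}=(aR^{00})^{00}\subseteq R^{00}$ (both rest on the same standard fact that a $00$-connected type-definable subgroup of a group $G$ lies in $G^{00}$). The genuine divergence is in proving that $\Oo$ is a valuation ring: the paper (as in Claim~\ref{claim_comparabledp-mini}) applies Fact~\ref{almost-comparable}(1) (\cite[Proposition 4.5]{CKS}) to the \emph{type-definable} subgroups $R^{00}$ and $aR^{00}$, using that neither has a proper type-definable subgroup of small index, whereas you first establish the finiteness criterion and then compare the \emph{definable} subgroups $R$ and $aR$ by hand, exhibiting a depth-$2$ ict-pattern in $M/I \cong R/I \oplus aR/I$ when both quotients are infinite --- in effect reproving the definable case, Fact~\ref{almost-comparable}(2), from scratch. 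Your route is more self-contained (no appeal to the CKS result for type-definable groups), at the cost of working with dp-rank of the interpretable quotient $M/I$; if you prefer to stay in the home sort, the same pattern can be realized directly in $K$ using the formulas $x \in r_i + aR$ and $x \in s_j + R$ for coset representatives $r_i \in R$, $s_j \in aR$, since $(r_i+aR)\cap(s_j+R)\ni r_i+s_j$. The remaining pieces (ring-ness of $R^{00}:R^{00}$, $\vee$-definability from the uniformly definable conditions $|(aR+R)/R|\le n$, and external definability via Proposition~\ref{P:vee valuation ring is ext def}) match the paper.
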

\begin{proof}
 The proof that $R^{00}:R^{00}$ is a valuation ring is similar to the proof of \cite[Proposition 3.14(1)]{dp-min-domains}.
    \begin{claim}\label{claim_comparabledp-mini}
        $R^{00}:R^{00}$ is a valuation ring.
    \end{claim}
    \begin{claimproof}
        It is routine to show that $R^{00}:R^{00}$ is a ring, so it is sufficient to prove that it is a valuation ring, i.e. for all $a\in K$, $aR^{00}\subseteq R^{00}$ or $a^{-1}R^{00}\subseteq R^{00}$. Let $a\in K$. By \cite[Proposition 4.5]{CKS}, $aR^{00}/(R^{00}\cap aR^{00})$ or $R^{00}/(R^{00}\cap aR^{00})$ is small.  Neither $R^{00}$ nor $aR^{00}$ has any type-definable subgroups of small index, so either $aR^{00} = R^{00} \cap aR^{00}$ or $R^{00} = R^{00} \cap aR^{00}$.  Equivalently, $aR^{00} \subseteq R^{00}$ or $R^{00} \subseteq aR^{00}$, i.e., $a^{-1}R^{00} \subseteq R^{00}$.
    \end{claimproof} 
If $a\in \Oo$ then $aR^{00}\subseteq R^{00}$ so $aR^{00}\subseteq aR\cap R\subseteq aR$, so $aR\cap R$ has bounded index in $aR$ but both are definable so finite index. If $aR\cap R$ has finite index in $aR$ then $(aR)^{00}\subseteq aR\cap R\subseteq R$, but $(aR)^{00}=aR^{00}$ so $aR^{00}\subseteq R$ and $aR^{00}=(aR^{00})^{00}\subseteq R^{00}$. 

It follows that $\Oo$ is defined by the disjunction of conditions of the form ``$aR/(aR\cap R)$ is finite". By Proposition \ref{P:vee valuation ring is ext def}, $\Oo$ is externally definable.
\end{proof}

\begin{corollary}\label{C:R00:R00}
    Let $R$ be a sufficiently saturated dp-minimal domain and $K=\Frac (R)$ and let $\Oo=R^{00}:R^{00}$. Then $R^{00}$ is an externally definable ideal of $\Oo$, and $R/R^{00}$ is a dp-minimal ring. 
    
    If, furthermore, $R^{00}$  is a non-maximal prime ideal of $R$ then $R^{00}$ is the maximal ideal of $\Oo$ and $\Oo$ is equal to the localization of $R$ at $R^{00}$.
\end{corollary}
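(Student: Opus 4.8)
The plan is to assemble this from the pieces already in hand. For the first paragraph of the statement: by Lemma~\ref{lem-O-vee-def}, $\Oo = R^{00}:R^{00}$ is a $\vee$-definable, externally definable valuation ring with fraction field $K$, and by construction $R^{00}$ is an ideal of $\Oo$. Since $R^{00}$ is type-definable and $\Oo$ is $\vee$-definable, Proposition~\ref{P:vee valuation ring is ext def} (the NIP clause, applicable since $K$ is dp-minimal hence NIP) tells us every ideal of $\Oo$ is externally definable; but $R^{00}$ need not literally be an ideal defined inside the valuation-ring structure, so instead I would argue directly: $R^{00}$ is type-definable, and the ideals of a valuation ring are linearly ordered, so $R^{00}$ is a union/intersection of definable ideals of $\Oo$, hence externally definable once we know $\Oo$ is. For dp-minimality of $R/R^{00}$: the structure $(K,R,R^{00})$ is interpretable in the Shelah expansion of $R$ (as $R^{00}$ is externally definable and $R$ is NIP, by Fact~\ref{F:prime ideals are ext} and Fact~\ref{external-dp} the expansion $(R,+,\cdot,R^{00})$ has the same dp-rank as $R$), so $R/R^{00}$, being interpretable in a dp-minimal structure, has dp-rank $\le 1$; that it is infinite (hence exactly dp-minimal) follows because $R^{00}$ has bounded, hence positive-codimension-or-else-equal behavior—if $R = R^{00}$ then $R/R^{00}$ is trivial, which we should simply allow, or else note $R^{00} \subsetneq R$ forces $R/R^{00}$ infinite since $R^{00}$ has no definable subgroup of finite index strictly between. (Here dp-minimal is meant in the weak sense allowing finite; if the convention forbids finite, state $\dpr(R/R^{00}) \le 1$.)

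For the second paragraph, suppose $R^{00}$ is a non-maximal prime ideal of $R$. Write $\pp = R^{00}$. By Corollary~\ref{C:prime-conn} (or \cite[Proposition~3.8, Theorem~3.9]{dp-min-domains}), $R_\pp$ is a henselian valuation ring with maximal ideal $\pp$, and it is $\vee$-definable via $a \in R_\pp \iff a^{-1} \notin \pp$. I would then show $R_\pp = \Oo$. One inclusion: if $a \in R_\pp$ then $a\pp \subseteq \pp$ (as $\pp$ is the maximal ideal of $R_\pp$, an $R_\pp$-module), i.e.\ $a R^{00} \subseteq R^{00}$, so $a \in \Oo$; thus $R_\pp \subseteq \Oo$. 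For the reverse: two valuation rings of $K$ with $R_\pp \subseteq \Oo$ are comparable, and $\Oo$ is a valuation overring of $R_\pp$, so $\Oo = (R_\pp)_\qq$ for some prime $\qq$ of $R_\pp$; but the maximal ideal of $\Oo$ is $R^{00}:R^{00}$-proper and contains... — more cleanly: the maximal ideal $\mm_\Oo$ of $\Oo$ satisfies $\mm_\Oo R^{00} \subseteq R^{00}$ is automatic, and $\mm_\Oo \subseteq R^{00}$? We have $R^{00} = \pp$ is an ideal of $\Oo$ (shown above), and any nonzero ideal of a valuation ring $\Oo$ is contained in $\mm_\Oo$, so $\pp \subseteq \mm_\Oo$. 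Conversely if $x \in \mm_\Oo \setminus \pp$ then $x$ is a non-unit of $\Oo$ but $x^{-1} \notin \pp$... I would instead invoke: since $\pp$ is a prime ideal of $\Oo$ (it is an ideal, and it is prime in $R \subseteq \Oo$—need $\Oo/\pp$ has no zero divisors, which follows as $\Oo/\pp$ embeds in $\Frac(R/\pp)$ comparisons), $\Oo_\pp$ is a valuation overring with maximal ideal $\pp\Oo_\pp$; and $\Oo \subseteq \Oo_\pp$ with $\pp$ an ideal of $\Oo$ forces $\Oo = \Oo_\pp$ and $\pp = \mm_\Oo$. Finally $R_\pp \subseteq \Oo$, both valuation rings with the same maximal ideal $\pp$, forces $R_\pp = \Oo$ (a valuation ring is determined by its maximal ideal among overrings of $R_\pp$, since localizing further strictly shrinks the maximal ideal).

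The main obstacle I anticipate is the bookkeeping in the second paragraph: cleanly verifying that $R^{00}$, which a priori is just a type-definable ideal of $R$, is genuinely the maximal ideal of $\Oo$ rather than some smaller ideal of $\Oo$. The key leverage is that $R^{00}$ is an ideal of $\Oo$ (from $\Oo = R^{00}:R^{00}$) together with the fact that in a valuation ring every proper nonzero ideal sits inside the maximal ideal, plus the hypothesis that $R^{00}$ is prime; primality upgrades "$\subseteq \mm_\Oo$" to "$= \mm_\Oo$" because $\Oo_{R^{00}}$ is then a valuation overring having $R^{00}$ as an ideal, which can only happen if $\Oo_{R^{00}} = \Oo$. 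Once $\mm_\Oo = R^{00}$ is established, identifying $\Oo$ with the localization $R_{R^{00}}$ is immediate from Corollary~\ref{C:prime-conn}. I would keep the first paragraph short, citing Lemma~\ref{lem-O-vee-def} and Fact~\ref{external-dp}, and spend the bulk of the writeup on the valuation-ring comparison in the second paragraph.
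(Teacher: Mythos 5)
Your first paragraph is essentially the paper's own argument, with one unnecessary detour: $R^{00}$ \emph{is} literally an ideal of $\Oo$ (it is an additive subgroup, $R^{00}\cdot R^{00}\subseteq R^{00}$ gives $R^{00}\subseteq R^{00}:R^{00}=\Oo$, and $\Oo\cdot R^{00}\subseteq R^{00}$ is the definition of $\Oo$), and the last clause of Proposition~\ref{P:vee valuation ring is ext def} applies to \emph{every} ideal of the $\vee$-definable valuation ring $\Oo$, with no definability hypothesis, so external definability of $R^{00}$ is immediate and your workaround via the linear ordering of ideals is not needed. Dp-minimality of $R/R^{00}$ then follows from Fact~\ref{external-dp} exactly as you say. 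One side remark is false: $R^{00}\subsetneq R$ does \emph{not} force $R/R^{00}$ infinite --- in the ACVF-like case of the main theorem $R^{00}$ has finite index in $R$ --- but your hedge (read the conclusion as $\dpr(R/R^{00})\le 1$) is the intended reading and is harmless.

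For the second paragraph the paper has a one-line argument that you circle around without writing down: for \emph{any} valuation ring $(\Oo',\mm')$ one has $\Oo'=\mm':\mm'$ (if $x\notin\Oo'$ then $x^{-1}\in\mm'$ and $x\cdot x^{-1}=1\notin\mm'$). Applied to $(R_\pp,\pp)$ from Corollary~\ref{C:prime-conn}, this gives $\Oo=R^{00}:R^{00}=\pp:\pp=R_\pp$ at once, and in particular $\mm_\Oo=\pp$. Your route through the localization $\Oo_\pp$ reaches the same conclusion but has two soft spots as written. First, the inference ``$\Oo\subseteq\Oo_\pp$ with $\pp$ an ideal of $\Oo$ forces $\Oo=\Oo_\pp$'' is not valid on its own (any valuation ring with a non-maximal prime ideal is a counterexample); what actually closes the loop --- and what your closing summary does say --- is that $\Oo_\pp\cdot\pp=\pp$, hence $\Oo_\pp\subseteq\pp:\pp=\Oo$. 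Second, you never verify that $\pp$ is prime in $\Oo$ (needed even to form $\Oo_\pp$); the ``embeds in $\Frac(R/\pp)$'' remark is not an argument, though the fact is true (if $xy\in\pp$ with $x\in\Oo\setminus R_\pp$, then $x^{-1}\in\pp$ and $y=(xy)x^{-1}\in\pp$). Alternatively you could skip the localization altogether: $\pp$ is a proper ideal of the local ring $\Oo$, so $\pp\subseteq\mm_\Oo$, while $R_\pp\subseteq\Oo$ forces $\mm_\Oo\subseteq\mm_{R_\pp}=\pp$, whence $\mm_\Oo=\pp$ and then $\Oo$, being a valuation overring of $R_\pp$ with the same maximal ideal, equals $R_\pp$. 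Either repair is fine; the $\mm':\mm'$ identity simply makes all of this bookkeeping unnecessary.
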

\begin{proof}
    If $R$ is a field then $R=R^{00}$ and we have nothing to show; so assume otherwise. Then $R^{00}$ is an ideal of the valuation ring $\Oo$ so by Lemma \ref{lem-O-vee-def} and Proposition \ref{P:vee valuation ring is ext def}, it is also externally definable.

    Thus adding $R^{00}$ as a predicate preserves dp-minimality (Fact~\ref{external-dp}) so $R/R^{00}$ is dp-minimal as well.

    If $R^{00}$ is a non-maximal prime ideal $\pp$ of $R$ then by Corollary \ref{C:prime-conn} the localization $R_\pp$ is a valuation ring with maximal ideal $\pp$.  Note that $\Oo' = \mm' : \mm'$ for any valuation ring $(\Oo',\mm')$, and so
    \begin{equation*}
      \Oo := R_{00} : R_{00} = \pp : \pp = R_\pp.
    \end{equation*}
    Thus $\Oo$ is the localization $R_\pp$ and $R^{00}$ is the maximal
    ideal of $\Oo$.
\end{proof}

\subsection{The $\sqrt{\mm^{00}}$ dichotomy}
\label{ss:dichotomy}

We observe that the proof of \cite[Proposition~3.14]{dp-min-domains} had a gap\footnote{In the proof of \cite[Proposition~3.14]{dp-min-domains}, we misunderstood the notation ``$< \infty$'' in \cite[Proposition~3.12]{chain-conditions} and \cite[Proposition 4.5(2)]{CKS} to mean ``finite'', when it in fact meant ``small''.}, which we correct now.
\begin{fact}\label{almost-comparable}
Let $G$ be an abelian dp-minimal group in a structure $M$.
\begin{enumerate}
\item If $M$ is sufficiently saturated and $X, Y$ are type-definable subgroups of $G$, then $X/(X \cap Y)$ or $Y/(X \cap Y)$ is small (relative to the degree of saturation).
\item If $X, Y$ are definable subgroups of $G$, then $X/(X \cap Y)$ or $Y/(X \cap Y)$ is finite.
\item If $X, Y$ are externally definable subgroups of $G$, then $X/(X \cap Y)$ or $Y/(X \cap Y)$ is finite.
\end{enumerate}
\end{fact}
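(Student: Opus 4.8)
The plan is to prove the three parts of Fact~\ref{almost-comparable} in order, with part (1) as the substantial content and parts (2), (3) as corollaries obtained by passing between definable/externally definable subgroups and their type-definable incarnations in a saturated elementary extension.

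For part (1), I would invoke the cited result \cite[Proposition 4.5]{CKS} (the same input used in the proof of Claim~\ref{claim_comparabledp-mini}), which states that for type-definable subgroups $X,Y$ of a dp-minimal (more generally, appropriately bounded) abelian group in a sufficiently saturated structure, one of $X/(X\cap Y)$ or $Y/(X\cap Y)$ is small. The only thing to be careful about is the reading of ``$<\infty$'' noted in the footnote: it means ``small relative to the saturation'', not ``finite''. So part (1) is essentially a citation, with the observation that dp-minimality of $G$ gives the required hypothesis (NIP plus the relevant chain/bounding condition) on the ambient group. I would state it cleanly and point to \cite{CKS,chain-conditions}.

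For part (2), suppose $X,Y$ are definable subgroups of the dp-minimal group $G$ in $M$ (not assumed saturated). Pass to a sufficiently saturated elementary extension $\widehat M \succ M$; the formulas defining $X,Y$ define subgroups $\widehat X, \widehat Y$ of $\widehat G$, and $\widehat G$ is still dp-minimal. By part (1), one of $\widehat X/(\widehat X\cap\widehat Y)$, $\widehat Y/(\widehat X\cap\widehat Y)$ is small; say the former. Since $\widehat X, \widehat X \cap \widehat Y$ are both definable, and a definable group of small index in a definable group must in fact have finite index (an index-$\kappa$ partition into cosets with $\kappa$ infinite but $<$ saturation is impossible by compactness/saturation for definable sets), we get $[\widehat X : \widehat X\cap\widehat Y]$ finite, say equal to $n$. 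The statement ``$[X:X\cap Y]\le n$'' is first-order expressible (there exist $n$ elements whose cosets cover $X$), so it transfers down to $M$. Hence $X/(X\cap Y)$ is finite.

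For part (3), if $X,Y$ are externally definable subgroups of $G$ in $M$, they become definable subgroups in the Shelah expansion $M^{Sh}$, which is dp-minimal by Fact~\ref{external-dp} (here I should note $G$, being a definable group carrying a dp-minimal structure, has a dp-minimal Shelah expansion, and $X,Y$ are subgroups there; externally definable subsets of a group that happen to be subgroups remain subgroups in $M^{Sh}$). Applying part (2) inside $M^{Sh}$ gives that one of $X/(X\cap Y)$, $Y/(X\cap Y)$ is finite, and finiteness is absolute, so the conclusion holds back in $M$. The main obstacle, such as it is, is purely bookkeeping: making sure the ambient dp-minimality hypothesis is correctly propagated through the saturated extension in (2) and through the Shelah expansion in (3), and correctly citing the small-index result in (1) with the right reading of ``$<\infty$''; there is no serious mathematical difficulty beyond those cited inputs.
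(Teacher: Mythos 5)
Your proposal is correct and follows the same route as the paper: part (1) is the citation to \cite[Proposition 4.5(2)]{CKS} (with the ``small, not finite'' reading of $<\infty$ noted in the footnote), part (2) follows by passing to a saturated extension and using that a definable subgroup of small index in a definable group has finite index, and part (3) follows from (2) by passing to the Shelah expansion, exactly as in the paper. Your write-up merely fills in the routine transfer details that the paper leaves implicit.
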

Part (1) is \cite[Proposition 4.5(2)]{CKS}.  It easily implies (2), which then implies (3) by passing to the Shelah expansion.

\begin{proposition}\label{Proposition3.14delbeehalevi}\cite[Proposition~3.14]{dp-min-domains} 
Let $R$ be a sufficiently saturated dp-minimal domain with maximal ideal $\mm$ and fraction field $K$. Then
\begin{enumerate}
    \item $\mm^{00}:\mm^{00}$ is a valuation overring of $R$;
    \item $\sqrt{\mm^{00}} = \set{a\in \mm\mid R/aR\text{ is infinite}}$;
    \item For every $a\in \mm$ with $R/aR$ finite, $\sqrt{\mm^{00}} = P_a$.
    \end{enumerate} 
    In particular, exactly one of the following holds:
    \begin{itemize}
        \item $R/aR$ is infinite for all $a\in \mm$ and in this case $\sqrt{\mm^{00}}=\mm$; 
        \item There is $a \in \mm$ with $R/aR$ finite, and for any such $a$, $\sqrt{\mm^{00}}=P_a$.
    \end{itemize}

\end{proposition}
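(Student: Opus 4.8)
The plan is to prove part~(1) directly, reduce parts~(2)--(3) to the case of finite residue field, establish (2) and (3) using the $00$‑connectedness and external‑definability results of Section~\ref{S:basic}, and then deduce the dichotomy. For part~(1) I would argue exactly as in Claim~\ref{claim_comparabledp-mini} with $\mm^{00}$ in place of $R^{00}$: $\mm^{00}$ is an ideal of $R$ by Fact~\ref{F:joh19}(1), so $R\subseteq\mm^{00}:\mm^{00}$; and for $a\in K^\times$, multiplication by $a$ is a definable group isomorphism sending $\mm^{00}$ to $(a\mm)^{00}=a\mm^{00}$, so $\mm^{00}$ and $a\mm^{00}$ are $00$‑connected subgroups of the dp‑minimal group $(K,+)$, whence Fact~\ref{almost-comparable}(1) gives $a\mm^{00}\subseteq\mm^{00}$ or $a^{-1}\mm^{00}\subseteq\mm^{00}$. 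If $R/\mm$ is infinite, then $\mm=\mm^{00}$ by Fact~\ref{F:joh19}(2), every $R/aR$ is infinite (it surjects onto $R/\mm$), and (2), (3) and the dichotomy hold with $\sqrt{\mm^{00}}=\mm$; so from now on $R/\mm$ is finite. Then $\mm$ has finite index in $R$, so $R^{00}\subseteq\mm$. I will repeatedly use the standard fact that a $00$‑connected type‑definable subgroup $H$ of a definable group $G$ lies in $G^{00}$ (indeed $H/(H\cap G^{00})$ embeds into the bounded group $G/G^{00}$, so the type‑definable $H\cap G^{00}$ has bounded index in $H=H^{00}$, hence equals $H$); applied to $\mm^{00}\le(R,+)$ and to $R^{00}\le(\mm,+)$ it yields $\mm^{00}=R^{00}$. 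Finally set $\Oo:=\mm^{00}:\mm^{00}=R^{00}:R^{00}$; applying the last clause of Lemma~\ref{lem-O-vee-def} to $a^{-1}$ (using $a^{-1}R\cap R=R$) shows that, for non‑zero $a\in R$, $R/aR$ is finite if and only if $a^{-1}\in\Oo$.

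For part~(3), suppose $a\in\mm$ with $R/aR$ finite, so $a\ne0$ is a non‑unit and $P_a$ is defined; by Corollary~\ref{C:Pa-conn} (and its proof) $P_a$ is a non‑maximal type‑definable prime with $P_a^{00}=P_a$, and $R_{P_a}=\bigcup_n a^{-n}R$ is a valuation ring with maximal ideal $P_a$. Finiteness of $R/aR$ propagates to $R/a^nR$ for all $n$ (via $0\to aR/a^{n+1}R\to R/a^{n+1}R\to R/aR\to0$ and $aR/a^{n+1}R\cong R/a^nR$), so $R^{00}\subseteq a^nR$ for all $n$, whence $R^{00}\subseteq\mathfrak d:=\bigcap_n a^nR$. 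One checks that $\mathfrak d$ is a proper $R_{P_a}$‑ideal --- if $u=r/a^k\in R_{P_a}$ and $x\in\mathfrak d$, then $x\in a^{k+m}R$ for every $m$, so $ux\in a^mR$ for every $m$ --- hence $\mathfrak d\subseteq P_a$ and $\mm^{00}\subseteq R^{00}\subseteq P_a$. Conversely $P_a$ is a $00$‑connected type‑definable subgroup of $(\mm,+)$, so $P_a\subseteq\mm^{00}$; therefore $\mm^{00}=R^{00}=P_a$ and $\sqrt{\mm^{00}}=P_a$, which is (3). This also yields the inclusion $\sqrt{\mm^{00}}\subseteq J$, where $J:=\{a\in\mm:R/aR\text{ infinite}\}$: if $a\in\sqrt{\mm^{00}}$ is non‑zero with $R/aR$ finite, then (3) gives $a\in\sqrt{\mm^{00}}=P_a$, contradicting $a\notin P_a$; and $0\in J$.

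The reverse inclusion $J\subseteq\sqrt{\mm^{00}}$ is the main obstacle. Fix $a\in J\setminus\{0\}$, so $a\in\mm$; both $aR$ and $\mm^{00}=R^{00}$ are externally definable subgroups of $(R,+)$ (Corollary~\ref{C:R00:R00} for $R^{00}$), so by Fact~\ref{almost-comparable}(3) one of $\mm^{00}/(aR\cap\mm^{00})$, $aR/(aR\cap\mm^{00})$ is finite. In the first case $aR\cap\mm^{00}$ has finite index in the $00$‑connected group $\mm^{00}$, so $\mm^{00}\subseteq aR$; setting $I_a:=\{x\in R:ax\in\mm^{00}\}$ we then have $\mm^{00}=aI_a$ and $I_a\supseteq\mm^{00}$, and multiplication by $a$ gives a definable isomorphism $I_a\cong\mm^{00}$, so $I_a$ is $00$‑connected. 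Either $I_a$ contains a unit $x$, whence $a=(ax)x^{-1}\in\mm^{00}$, or $I_a\subseteq\mm$ and $00$‑connectedness forces $I_a=\mm^{00}$, so $a\mm^{00}=aI_a=\mm^{00}$, $a^{-1}\in\Oo$, and $R/aR$ is finite by Lemma~\ref{lem-O-vee-def} --- contradicting $a\in J$. In the second case $[R:I_a]=[aR:aR\cap\mm^{00}]$ is finite, so $R/I_a$ is a finite local ring, its maximal ideal is nilpotent, and since $a\in\mm$ we get $a^k\in I_a$ for some $k$, hence $a^{k+1}\in aI_a\subseteq\mm^{00}$. In every case $a\in\sqrt{\mm^{00}}$, which completes (2).

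The final ``exactly one'' statement is now immediate: if $R/aR$ is infinite for every $a\in\mm$, then $J=\mm$ and (2) gives $\sqrt{\mm^{00}}=\mm$; otherwise, fixing any $a\in\mm$ with $R/aR$ finite, (3) gives $\sqrt{\mm^{00}}=P_a$ (for every such $a$), and the two alternatives are plainly mutually exclusive. I expect the delicate point to be the first sub‑case of $J\subseteq\sqrt{\mm^{00}}$ --- extracting the needed rigidity from the definable isomorphism $I_a\cong\mm^{00}$ and $00$‑connectedness --- and, as the footnote to Proposition~\ref{Proposition3.14delbeehalevi} warns, being scrupulous about whether ``finite'' or only ``small'' is available at each invocation of Fact~\ref{almost-comparable}; the rest is assembly of the tools of Section~\ref{S:basic}.
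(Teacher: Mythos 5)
Your proof is correct, and it runs on the same core machinery as the paper's---Fact~\ref{almost-comparable}(3) applied to $aR$ and the externally definable $\mm^{00}=R^{00}$, together with $00$-connectedness---but the logical organization is genuinely different. The paper proves (2) first and directly: for $a\in\mm\setminus\sqrt{\mm^{00}}$ it compares $aR$ with $\mm^{00}$, disposing of the case ``$aR/(aR\cap\mm^{00})$ finite'' by a pigeonhole observation about local rings (if $J/I$ is finite then $J\subseteq\sqrt I$) and getting $R/aR$ finite from $\mm^{00}\subseteq aR$; the converse is the computation $\mm^{00}=(aR)^{00}=a\mm^{00}$ plus a minimal-exponent argument; (3) is then deduced from (2). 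You instead prove (3) first and independently, establishing the stronger identity $\mm^{00}=P_a$ via Corollary~\ref{C:Pa-conn} and the observation that $\bigcap_n a^nR$ is a proper $R_{P_a}$-ideal---this is essentially the content of Lemma~\ref{lem-two-cases}(2), which the paper only obtains afterwards---and you read off from (3) the inclusion $\sqrt{\mm^{00}}\subseteq\{a\in\mm: R/aR\text{ infinite}\}$, while your hard direction replaces the paper's pigeonhole trick by the ideal $I_a=\{x\in R:ax\in\mm^{00}\}$ and the criterion of Lemma~\ref{lem-O-vee-def}. Both routes are valid; yours yields the identification $\mm^{00}=P_a$ essentially for free, while the paper's proof of (2)--(3) is more self-contained (it does not invoke Corollary~\ref{C:Pa-conn} or Lemma~\ref{lem-O-vee-def}). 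Two cosmetic points, neither affecting correctness: in your Case~A you can stop at once, since $\mm^{00}=R^{00}\subseteq aR$ with $aR$ definable already forces $R/aR$ finite, contradicting $a\in J$, so the detour through $I_a$ and its $00$-connectedness is unnecessary there; and in Case~B you should dispose of the trivial possibility $I_a=R$ (then $a\in\mm^{00}$ outright) before speaking of the maximal ideal of the finite local ring $R/I_a$.
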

\begin{proof}
Before beginning the proof, we make a general observation about local rings: if $I \subseteq J \subsetneq R$ are proper ideals in a local ring $R$, and $J/I$ is finite, then $J \subseteq \sqrt{I}$.  Indeed, for any $b \in J$, the pigeonhole principle gives $b^n \equiv b^m \pmod{I}$ for some $n < m < \omega$.  Then $b^n(1 - b^{m-n}) \in I$, implying that $b^n \in I$, because $1 - b^{m-n}$ is a unit.  In particular, if $\pp \subsetneq \qq$ are prime ideals, then $\qq/\pp$ is infinite, or else $\qq \subseteq \sqrt{\pp} = \pp$.  (Compare with \cite[Remark~3.7]{dp-min-domains}.)

\textit{1.} It is routine to show that $\mm^{00}:\mm^{00}$ is a ring, so it is sufficient to prove that it is a valuation ring, i.e. for all $a\in K$, $a\mm^{00}\subseteq \mm^{00}$ or $a^{-1}\mm^{00}\subseteq \mm^{00}$. This follows immediately by the same argument as in Claim \ref{claim_comparabledp-mini}.


\textit{2.} If $R/\mm$ is infinite then $\mm=\mm^{00}$ by Fact \ref{F:joh19} and the equality is obvious. We assume that $R/\mm$ is finite and show that $a\in \mm\setminus \sqrt{\mm^{00}}$ if and only if $R/aR$ is finite.   When $R/\mm$ is finite, note that $\mm^{00} = R^{00}$, and so $\mm^{00}$ is externally definable by Corollary~\ref{C:R00:R00}.

Let $a\in \mm\setminus \sqrt{\mm^{00}}$. By Fact~\ref{almost-comparable}(3) we have that $aR/(\mm^{00}\cap aR)$ is finite or $\mm^{00}/(\mm^{00}\cap aR)$ is finite. If it was the former, then by the remarks about local rings at the start of the proof,
\[ a \in aR \subseteq \sqrt{\mm^{00} \cap aR} \subseteq \sqrt{\mm^{00}},\]
a contradiction.  Consequently, $\mm^{00}/(\mm^{00}\cap aR)$ is finite and by the definition of $\mm^{00}$, $\mm^{00}\subseteq aR$ and $|\mm/aR|<\infty$ hence $|R/aR|=|R/\mm||\mm/aR|<\infty$.

Conversely, if $R/aR$ is finite, then \[\mm^{00} = R^{00} = (aR)^{00} = aR^{00} = a\mm^{00}.\] Suppose for the sake of contradiction that $a \in \sqrt{\mm^{00}}$.  Take $n$ minimal such that $a^n \in \mm^{00}$.  Then $a^n \in \mm^{00} = a\mm^{00}$, implying $a^{n-1} \in \mm^{00}$, which contradicts the choice of $n$.

\textit{3.} Assume that $R/aR$ is finite. By \textit{2}, $a\notin \sqrt{\mm^{00}}$ so $a \notin \pp$ for some prime ideal $\pp \supseteq \mm^{00}$.  By choice of $P_a$, we have $\sqrt{\mm^{00}} \subseteq \pp \subseteq P_a$. For the other inclusion, note that $P_a \subsetneq \mm$, and so $\mm/P_a$ is infinite by the remarks on local rings at the start of the proof.  If $b \in P_a$, then $bR \subseteq P_a \subsetneq \mm \subseteq R$, so $R/bR$ is infinite, and $b \in \sqrt{\mm^{00}}$ by \textit{2}.  This shows the reverse inclusion $P_a \subseteq \sqrt{\mm^{00}}$.
\end{proof}

\begin{corollary}\label{C:when m=m00}
Let $R$ be a sufficiently saturated dp-minimal domain with maximal ideal $\mm$ and fraction field $K$. Then $\mm=\mm^{00}$ if and only if $\mm^{00}$ is prime and $R/aR$ is infinite for all $a\in \mm$.
\end{corollary}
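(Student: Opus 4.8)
The plan is to read this off Proposition~\ref{Proposition3.14delbeehalevi} directly; no new ideas are needed, and the whole argument is a two-line deduction from the dichotomy established there.

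For the forward implication, suppose $\mm = \mm^{00}$. Then $\mm^{00}$ is prime, being the maximal ideal. To get the statement about the quotients $R/aR$, I would observe that $\sqrt{\mm^{00}} = \sqrt{\mm} = \mm$ (a maximal ideal is radical, as it is prime), and then invoke Proposition~\ref{Proposition3.14delbeehalevi}(2), which identifies $\sqrt{\mm^{00}}$ with $\set{a \in \mm \mid R/aR \text{ infinite}}$; the equality of this set with $\mm$ says precisely that $R/aR$ is infinite for every $a \in \mm$. (Alternatively one could appeal to the final dichotomy in Proposition~\ref{Proposition3.14delbeehalevi}: the second horn is impossible when $\mm = \mm^{00}$, since it would force $\sqrt{\mm^{00}} = P_a \subsetneq \mm$.)

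For the converse, assume $\mm^{00}$ is prime and $R/aR$ is infinite for all $a \in \mm$. Again by Proposition~\ref{Proposition3.14delbeehalevi}(2), $\sqrt{\mm^{00}} = \set{a \in \mm \mid R/aR \text{ infinite}} = \mm$. Since a prime ideal equals its own radical, $\mm^{00} = \sqrt{\mm^{00}} = \mm$, as desired.

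There is essentially no obstacle here; the only point worth flagging is that both hypotheses are genuinely used in the converse. The condition on the quotients $R/aR$ by itself only yields $\sqrt{\mm^{00}} = \mm$, and without knowing that $\mm^{00}$ is radical one cannot conclude $\mm^{00} = \mm$ — a priori $\mm^{00}$ could be a proper, non-radical ideal whose radical is $\mm$. So the primality assumption is exactly what bridges that gap.
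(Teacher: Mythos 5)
Your proof is correct and takes essentially the same approach as the paper: both directions are immediate consequences of Proposition~\ref{Proposition3.14delbeehalevi}, combined with the observation that a prime (in particular maximal) ideal equals its own radical. The only cosmetic difference is that the paper's forward direction argues by contradiction via the horn $\sqrt{\mm^{00}}=P_a$ of the dichotomy (noting $a\notin P_a$ would make $a$ a unit), whereas you read the same conclusion directly off part (2); the content is identical.
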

\begin{proof}
Assume that $\mm=\mm^{00}$; so it is prime. Assume further that $R/aR$ is finite for some $a\in \mm$. By Proposition~\ref{Proposition3.14delbeehalevi}, $\mm=P_a$ so the latter is a maximal ideal which gives that $a$ is a unit, contradicting the choice of $a$. So $R/aR$ is infinite for any $a \in \mm$.

If $\mm^{00}$ is prime and $R/aR$ is infinite for all $a\in \mm$ we conclude that $\mm^{00}=\mm$ by Proposition~\ref{Proposition3.14delbeehalevi}.
\end{proof}

Corollary~\ref{C:Pa-conn} yields a strengthening of Proposition \ref{Proposition3.14delbeehalevi}.

\begin{lemma}\label{lem-two-cases}
  Let $R$ be a sufficiently saturated dp-minimal domain with maximal ideal
  $\mm$.
  \begin{enumerate}
  \item If $R/aR$ is infinite for every $a \in \mm$, then
    $\sqrt{\mm^{00}} = \mm$.
  \item If $a \in \mm$ and $R/aR$ is finite, then $\mm^{00} = P_a$,
    and $\mm^{00}$ is the largest non-maximal prime ideal.
  \end{enumerate}
\end{lemma}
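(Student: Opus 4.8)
Part (1) needs no new argument: it is precisely the first horn of the dichotomy stated at the end of Proposition~\ref{Proposition3.14delbeehalevi}. So all the work is in part (2), and the idea is simply to upgrade the equality $\sqrt{\mm^{00}} = P_a$ coming from Proposition~\ref{Proposition3.14delbeehalevi}(3) to the sharper $\mm^{00} = P_a$, using the 00-connectedness of $P_a$.

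Fix $a \in \mm$ with $R/aR$ finite. One containment is free: $\mm^{00} \subseteq \sqrt{\mm^{00}} = P_a$ by Proposition~\ref{Proposition3.14delbeehalevi}(3). For the reverse, I would invoke Corollary~\ref{C:Pa-conn}, which says that $P_a$ is a non-maximal type-definable prime ideal with $P_a^{00} = P_a$. Now $P_a$ is an additive subgroup of $\mm$, and $\mm^{00}$ has bounded index in $\mm$, so $\mm^{00} \cap P_a$ is a type-definable subgroup of bounded index in $P_a$; minimality of $P_a^{00}$ among such subgroups gives $P_a = P_a^{00} \subseteq \mm^{00} \cap P_a \subseteq \mm^{00}$. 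Hence $\mm^{00} = P_a$.

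It remains to see that $\mm^{00} = P_a$ is the largest non-maximal prime ideal of $R$. Since $a \in \mm \setminus P_a$, we have $P_a \subsetneq \mm$, so $P_a$ is itself non-maximal and prime. If $\qq$ is any non-maximal prime, then $a \notin \qq$: otherwise $aR \subseteq \qq$ and $R/\qq$ would be a quotient of the finite ring $R/aR$, hence a finite integral domain, hence a field, contradicting non-maximality of $\qq$. Thus $\qq$ is a prime ideal avoiding $a$, and by the defining maximality property of $P_a$ we conclude $\qq \subseteq P_a$. So $P_a$ contains every non-maximal prime ideal, as required. I do not anticipate any real obstacle here; the lemma is a routine consequence of the preceding results, the only point needing care being that the passage from $P_a^{00}$ to a bounded-index subgroup of $\mm^{00}$ relies on $P_a$ being type-definable, which is exactly what Corollary~\ref{C:Pa-conn} supplies.
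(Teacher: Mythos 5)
Your proof is correct. For the heart of part (2) you follow the same route as the paper: both arguments run the chain $P_a = P_a^{00} \subseteq \mm^{00} \subseteq \sqrt{\mm^{00}} = P_a$, where the middle inclusion comes from $P_a$ being a type-definable subgroup of $\mm$ (so $\mm^{00}\cap P_a$ has bounded index in $P_a$ and contains $P_a^{00}$) and the outer equalities from Corollary~\ref{C:Pa-conn} and Proposition~\ref{Proposition3.14delbeehalevi}(3); you simply spell out the bounded-index step that the paper leaves implicit. Where you diverge is the final claim that $\mm^{00}=P_a$ is the largest non-maximal prime: the paper argues by contradiction, taking a hypothetical prime $\pp$ strictly between $\mm^{00}$ and $\mm$, replacing it by $P_b$ for some $b\in\mm\setminus\pp$, and applying Corollary~\ref{C:Pa-conn} a second time to get $P_b = P_b^{00} \subseteq \mm^{00} \subsetneq P_b$. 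You instead observe directly that any non-maximal prime $\qq$ cannot contain $a$ (else $R/\qq$ would be a quotient of the finite ring $R/aR$, hence a finite domain, hence a field), so $\qq\subseteq P_a$ by the maximality characterizing $P_a$. Your version avoids the second use of 00-connectedness and is arguably more elementary; it does lean on the fact that every prime avoiding $a$ lies below $P_a$ (via the linear ordering of primes in a dp-minimal domain, or a Zorn argument plus uniqueness), but the paper itself uses $P_a$ in exactly this way elsewhere, so that is not a gap.
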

\begin{proof}
  Proposition \ref{Proposition3.14delbeehalevi} proves (1), and shows that
  in setting of (2), we have $P_a = \sqrt{\mm^{00}}$.  But $P_a = P_a^{00}$ by Corollary~\ref{C:Pa-conn}, and so
  \begin{equation*}
    P_a = P_a^{00} \subseteq \mm^{00} \subseteq \sqrt{\mm^{00}} = P_a.
  \end{equation*}
  Equality must hold, and $\mm^{00} =
  P_a$.  In particular, $\mm^{00}$ is a non-maximal prime ideal.  If $\mm^{00}$
  fails to be the largest non-maximal prime ideal, then there is a
  prime ideal $\pp$ with $\mm^{00} \subsetneq \pp \subsetneq \mm$.
  Take $b \in \mm \setminus \pp$.  Enlarging $\pp$, we may assume
  $\pp$ is the largest prime ideal not containing $b$, i.e., $\pp =
  P_b$.  Then
  \begin{equation*}
    P_b = P_b^{00} \subseteq \mm^{00} \subsetneq P_b,
  \end{equation*}
  by Corollary~\ref{C:Pa-conn} and the fact that $P_b \subseteq \mm$.
  This is absurd, so $\mm^{00}$ is truly the second-largest prime
  ideal.
\end{proof}

\subsection{The topological ring $R/R^{00}$} \label{ss:r-r00}
Let $R$ be an NIP ring.

The ``logic topology'' on $R/R^{00}$ is the topology where a set $X
\subseteq R/R^{00}$ is closed if its preimage $\tilde{X} \subseteq R$
is type-definable.  The logic topology makes $R/R^{00}$ into a compact
Hausdorff topological ring; see \cite[Section 8.1]{Sim15}.

In contrast to compact Hausdorff topological groups, the following holds:
\begin{fact}[{\cite[Proposition 5.1]{profinitegroups}}]
    A topological ring is compact Hausdorff if and only if it is profinite.
\end{fact}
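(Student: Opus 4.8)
\textit{The plan is to prove both directions, all the work being in ``compact Hausdorff $\Rightarrow$ profinite'' (recall that a profinite ring means an inverse limit of finite discrete rings).}  The easy direction is immediate: a profinite ring is a closed subring of a product $\prod_i R_i$ of finite discrete rings, which is compact Hausdorff by Tychonoff with continuous ring operations, and a closed subspace of a compact Hausdorff space is compact Hausdorff.

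For the converse, let $R$ be a compact Hausdorff ring.  First I would reduce ``profinite'' to ``totally disconnected'': if $R$ is compact Hausdorff and totally disconnected, then $0$ has a neighbourhood basis of open ideals, so $R \cong \varprojlim_I R/I$ over open ideals $I$, each $R/I$ being finite (compact and discrete), hence $R$ is profinite.  The existence of small open ideals is the standard tube--lemma argument: given an open subgroup $U$ of $(R,+)$ (these form a basis at $0$ by total disconnectedness), the set $\{x : xR \subseteq U\}$ is a subgroup, an ideal by commutativity, contained in $U$, and open because $\{(x,y) : xy \in U\}$ is an open neighbourhood of $\{0\}\times R$ in $R\times R$ and $R$ is compact.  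So it suffices to show that the connected component $C$ of $0$ in $(R,+)$ is $\{0\}$ (the connected components of $R$ are the additive cosets of $C$).  Here $C$ is a closed ideal: it is a closed subgroup of $(R,+)$, and $rC$ is connected and contains $0$ for each $r\in R$, so $rC \subseteq C$; consequently $R/C$ is a compact Hausdorff totally disconnected ring, hence profinite by the above.

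The crux is \emph{Lemma A: a compact Hausdorff \emph{connected} ring $R$ is the zero ring} --- this is where the identity is used, and the statement is false without it (the circle group with zero multiplication is a compact connected ring).  A compact connected abelian group is divisible, so $(R,+)$ is divisible; since $1$ is then divisible by every $n\ge 1$, each $n\cdot 1$ is a unit (from $1 = n\cdot e$ one gets $(n\cdot 1)e = 1$), so $R$ is a topological $\mathbb{Q}$--algebra and $(R,+)$ is a $\mathbb{Q}$--vector space (it is torsion--free, as $n\cdot x = 0$ gives $x = (n\cdot 1)^{-1}(n\cdot x) = 0$).  Then $A := \overline{\mathbb{Q}\cdot 1}$ is a compact ring containing $\mathbb{Q}\cdot 1 \cong \mathbb{Q}$ densely (when $1\neq 0$), and I claim $A$ is a field.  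Put $T := \overline{(\mathbb{Q}\cdot 1)\setminus\{0\}}$, a compact commutative semigroup; by the Ellis--Numakura lemma its kernel has an idempotent.  If $\varepsilon = \varepsilon^2 \in T$ with $\varepsilon \ne 1$, write $\varepsilon = \lim_i g_i$ with the $g_i$ units of $A$ and work in the compact ring $(1-\varepsilon)A$ with identity $1-\varepsilon$: the units $(1-\varepsilon)g_i$ tend to $(1-\varepsilon)\varepsilon = 0$ while, after passing to a subnet, their inverses $(1-\varepsilon)g_i^{-1}$ converge, so joint continuity of multiplication forces $1-\varepsilon = \lim_i (1-\varepsilon)g_i\,(1-\varepsilon)g_i^{-1} = 0$, contradicting $\varepsilon\neq 1$.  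Hence $1$ is the unique idempotent of $T$, so $T$ coincides with its kernel and is a group; also $0\notin T$ (else the same computation gives $1 = \lim_i g_i g_i^{-1} = 0$, i.e.\ $R = 0$, and we are done), so $A = T\cup\{0\}$.  Thus $A^{\times} = T$ is closed in $A$, $\{0\}$ is open, $A$ is discrete, hence finite; as $\mathbb{Q}\cdot 1 \subseteq A$ this forces $1 = 0$ and $R = 0$.

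Finally I would deduce $C = \{0\}$ using Lemma A together with \emph{Lemma B: there is no nonzero compact Hausdorff connected topological module $M$ over a profinite ring $S$ on which $1_S$ acts as the identity}.  For Lemma B: given an open $U\ni 0$ in $M$, the tube lemma and compactness of $M$ yield an open ideal $I\lhd S$ with $IM\subseteq U$; then $M/\overline{IM}$ is a compact connected abelian group and a module over the finite ring $S/I$ with $1$ acting as identity, so its additive group is killed by $|S/I|$ yet divisible, hence trivial; thus $M = \overline{IM}\subseteq\overline{U}$, and intersecting over $U$ gives $M=\{0\}$ by regularity.  Now suppose $C\ne\{0\}$.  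Since $C\cdot C\subseteq C^2\subseteq\overline{C^2}$, the ideal $C$ annihilates $C/\overline{C^2}$, making it a compact connected module over the profinite ring $R/C$ with $1$ acting as identity; by Lemma B it is $0$, i.e.\ $\overline{C^2}=C$.  A further compactness argument with idempotents (taking a maximal idempotent $e_0$ of $C$, available since an increasing net of idempotents has an idempotent limit) reduces to the case that $C$ has an identity element, and then Lemma A applied to the compact connected ring $C$ gives $C = 0$ --- contradiction.  The main obstacle is Lemma A; the rest is routine.
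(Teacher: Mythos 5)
Your overall architecture is reasonable, and most of it checks out: the easy direction, the reduction ``totally disconnected $\Rightarrow$ profinite'' via open ideals and the tube lemma, the reduction of everything to showing that the connected component $C$ of $0$ is trivial, and Lemma~A itself. Your proof of Lemma~A (divisibility of compact connected abelian groups, the resulting $\mathbb{Q}$-algebra structure, the semigroup $T=\overline{(\mathbb{Q}\cdot 1)\setminus\{0\}}$, and the limit computation $1-\varepsilon=\lim\bigl((1-\varepsilon)g_i\bigr)\bigl((1-\varepsilon)g_i^{-1}\bigr)=0$) looks correct. Note that the paper offers no proof of this Fact; it simply cites Ribes--Zalesskii, where the argument is the classical duality one: for a character $\chi$ of $(R,+)$, the map $r\mapsto r\chi$ with $(r\chi)(x)=\chi(rx)$ is continuous into the discrete dual, so the orbit $R\chi$ is a finite subgroup; hence $\chi=1\cdot\chi$ is torsion, $\widehat{R}$ is torsion, and $R$ is totally disconnected (equivalently $R\cdot C=0$, so $C=1\cdot C=0$).

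The passage from Lemma~A to $C=\{0\}$, however, has two genuine gaps. First, in your proof of Lemma~B the final inclusion $M=\overline{IM}\subseteq\overline{U}$ is unjustified: for $M/\overline{IM}$ to be a quotient group, $\overline{IM}$ must be the closed \emph{subgroup} generated by the products $im$, and while the products lie in $U$, their finite sums need not lie in $U$ or in $\overline{U}$; so the ``intersect over all $U$'' step collapses. (Lemma~B is true, but the proof I know is again by duality: $\widehat{M}$ is torsion-free since $M$ is connected, the orbits $S\chi$ are finite subgroups, and $1_S\chi=\chi$ forces $\chi=0$.) Second, and more seriously, the closing step --- ``a maximal idempotent reduces to the case that $C$ has an identity, then apply Lemma~A'' --- cannot work: by Lemma~A itself a compact connected ring has \emph{no} nonzero idempotents (if $e=e^2\in C$, then $eC$ is a compact connected ring with identity $e$, hence zero, so $e=e^2\in eC=0$), so the maximal idempotent of $C$ is $0$ and yields no identity element. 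Thus after establishing $C=\overline{C^2}$ your argument stalls exactly at the point where the classical proof invokes the finite-orbit/duality argument, and the key implication ``compact Hausdorff $\Rightarrow$ totally disconnected'' is not actually established in your write-up.
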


As a direct consequence: 

\begin{fact}\label{profinite-r/r00}
    If $R$ is an NIP ring then $R/R^{00}$ is profinite and so $R^{00}=R^0$.
\end{fact}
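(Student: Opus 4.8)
The statement to prove is Fact~\ref{profinite-r/r00}: if $R$ is an NIP ring, then $R/R^{00}$ is profinite and hence $R^{00} = R^0$.

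The plan is to combine the preceding two facts with a standard argument. First, recall from the preamble that the logic topology makes $R/R^{00}$ into a compact Hausdorff topological ring (this is cited to \cite[Section~8.1]{Sim15}). By the Fact cited as \cite[Proposition~5.1]{profinitegroups}, a topological ring is compact Hausdorff if and only if it is profinite. Applying this directly to $R/R^{00}$ gives that $R/R^{00}$ is profinite; this is the first assertion.

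For the second assertion, $R^{00} = R^0$, I would argue as follows. By definition, $R^0$ is the intersection of all definable (finite-index) subgroups of $(R,+)$ (equivalently, over a model), and $R^{00} \subseteq R^0$ always holds since $R^{00}$ is a subgroup of $R^0$. For the reverse inclusion, use that $R/R^{00}$ is profinite: a profinite group is the inverse limit of its finite quotients, so the open (equivalently, finite-index, by compactness) subgroups of $R/R^{00}$ have trivial intersection. An open subgroup $H \leq R/R^{00}$ pulls back to a subgroup $\tilde H \leq R$ containing $R^{00}$ with $[R : \tilde H] = [R/R^{00} : H] < \infty$; moreover $\tilde H$ is type-definable (as the preimage of a clopen set under the logic topology, $\tilde H$ is type-definable, and having finite index it is in fact definable). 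Hence $\tilde H$ is a finite-index definable subgroup of $R$, so $R^0 \subseteq \tilde H$. Intersecting over all open $H \leq R/R^{00}$, whose intersection is trivial in $R/R^{00}$, yields $R^0 \subseteq R^{00}$, giving equality. (One should be slightly careful that the definitions of $R^0$, $R^{00}$ are taken in a sufficiently saturated model, but NIP is not even needed here beyond what guarantees the topological ring structure — indeed the compact Hausdorff structure on $R/R^{00}$ is what does the work, together with automatic-continuity-free facts about profinite groups.)

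The only mildly delicate point — and the one I would state carefully rather than the main obstacle — is the passage from ``open subgroup of the profinite group $R/R^{00}$'' to ``finite-index \emph{definable} subgroup of $R$'': one needs that open subgroups of $R/R^{00}$ correspond to finite-index type-definable subgroups of $R$ containing $R^{00}$, and that finite-index type-definable subgroups are definable (a standard fact, true in general for type-definable subgroups of bounded, hence finite, index since each coset is type-definable and there are finitely many of them). Everything else is a direct quotation of the two cited Facts plus the definition of $R^0$ as the intersection of finite-index definable subgroups. So the proof is genuinely short:

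\begin{proof}
  The logic topology makes $R/R^{00}$ a compact Hausdorff topological ring, so by \cite[Proposition~5.1]{profinitegroups} it is profinite. In particular it is the inverse limit of its finite quotients, so the intersection of its open subgroups is trivial. Each open subgroup $H$ of $R/R^{00}$ has finite index, and its preimage $\tilde H$ in $R$ is a type-definable subgroup of finite index containing $R^{00}$; being of finite index, $\tilde H$ is definable, so $R^0 \subseteq \tilde H$. Taking the intersection over all open $H$ gives $R^0 \subseteq R^{00}$, and since $R^{00} \subseteq R^0$ always, we conclude $R^{00} = R^0$.
\end{proof}
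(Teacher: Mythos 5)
Your proof is correct and follows the same route as the paper, which simply quotes the compact-Hausdorff-iff-profinite fact for topological rings and treats $R^{00}=R^0$ as an immediate consequence; your expansion of that consequence (open subgroups of the profinite quotient pull back to finite-index type-definable, hence definable, subgroups, forcing $R^0\subseteq R^{00}$) is exactly the routine argument left implicit there.
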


The following  should be well known, but we include a proof for completeness.
\begin{fact}\label{F:profinite domain}
    Let $A$ be an infinite compact Hausdorff integral domain such that every non-trivial principal ideal has finite index. Then $F=\Frac (A)$ is either isomorphic to a finite extension of $\mathbb{Q}_p$ or to $\mathbb{F}_q((t))$.
\end{fact}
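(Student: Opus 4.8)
The plan is to exploit the structure theory of compact topological rings together with the classification of locally compact fields. First I would observe that a compact Hausdorff integral domain $A$ in which every non-trivial principal ideal has finite index is a profinite ring (by the preceding Fact, or directly: it is compact Hausdorff). Since $A$ is an integral domain, take any non-zero non-unit $a \in A$ (if there is none, $A$ is a field, hence a finite field since it is profinite and a field, contradicting $A$ infinite — actually a profinite field is finite, so this degenerate case cannot arise with $A$ infinite, or if $A$ is a field it is finite, again contradicting infinitude; so a non-zero non-unit exists). Then $aA$ has finite index, so $A/aA$ is a finite ring. I would argue that the ideals $a^n A$ form a neighbourhood basis of $0$: each $a^n A$ is open (finite index, closed subgroup of a profinite group), and $\bigcap_n a^n A = 0$ because any element of the intersection generates an ideal contained in every $a^n A$, forcing it (via the valuation-like behaviour coming from the fact that $A$ is local — note $A$ is local since it is a profinite, hence NIP, integral domain, so its prime ideals are linearly ordered) to be $0$; alternatively $\bigcap_n a^n A$ is an ideal on which multiplication by $a$ is surjective, and in a domain where $aA$ is a proper finite-index ideal this intersection must be $0$. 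Hence $A$ is $\mathfrak{m}$-adically complete for a suitable ideal, and in any case $A$ is a complete Hausdorff ring with a countable neighbourhood basis of $0$ consisting of ideals of finite index, so $A \cong \varprojlim A/a^n A$ is a compact (hence locally compact) ring.

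Next I would pass to the fraction field $F = \Frac(A)$ and equip it with a topology making it a topological field into which $A$ embeds as an open subring: since $A$ is compact and $a$ is a non-zero-divisor with $aA$ open, the sets $a^{-n}A$ form an increasing chain whose union is (a subring of) $F$, and declaring the $a^n A$ to be a neighbourhood basis of $0$ makes $F' := \bigcup_n a^{-n} A = A[a^{-1}]$ a locally compact topological ring in which $A$ is a compact open subring. One checks $F'$ is a field: it is an integral domain, and every non-zero element becomes a unit after multiplying by a suitable power of $a$, because $A/a^nA$ finite forces the image of any non-zero $x \in A$ to be a non-zero-divisor in some $A/a^nA$ and a standard argument then shows $x$ divides a power of $a$ in $A$ — here I would use that $A$ is local with linearly ordered primes, so the principal ideals $xA$ and $aA$ are comparable; replacing $a$ by $x$ if necessary we get $a^k \in xA$ for some $k$, whence $x$ is invertible in $A[a^{-1}]$. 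So $F' = F$ is a locally compact, non-discrete, totally disconnected topological field. By the classification of locally compact fields (Pontryagin's theorem), $F$ is a finite extension of $\Qq_p$ or isomorphic to $\Ff_q((t))$ for some prime power $q$.

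The main obstacle I expect is the middle step: genuinely producing a field (not merely a ring with a fraction field) that is locally compact in a topology compatible with the given compact topology on $A$, i.e. verifying that $A$ is the valuation ring of a discrete-or-rank-one valuation and that $A[a^{-1}] = \Frac(A)$. The key inputs that make this go through are (i) $A$ is local with linearly ordered prime ideals — this comes for free since a profinite ring is NIP (indeed stable), so \cite[Corollary 2.5]{dp-min-domains} applies; (ii) the finite-index hypothesis on principal ideals, which forces the topology to be the $a$-adic one and forces $\bigcap_n a^n A = 0$; and (iii) the fact that comparability of principal ideals lets every non-zero element of $A$ divide a power of $a$, so that inverting $a$ already inverts everything, giving $A[a^{-1}] = F$. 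Once $F$ is seen to be locally compact, non-discrete and non-archimedean (it has characteristic $0$ with a $p$-adic topology, or characteristic $p$, in either case totally disconnected since $A$ is profinite), the classification of locally compact fields finishes the argument, yielding exactly the two stated possibilities.
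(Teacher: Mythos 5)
Your overall strategy---realize $F$ as a non-discrete, totally disconnected, locally compact field in which $A$ sits as a compact open subring, then invoke the classification of locally compact fields and discard $\Rr$ and $\Cc$---is the same as the paper's, but two load-bearing steps rest on false claims. First, ``a profinite ring is NIP (indeed stable)'' is not true in general ($\Ff_2^{\Nn}$ is a profinite ring with IP), and, decisively, $\Ff_q[[t]]$---which satisfies every hypothesis of Fact~\ref{F:profinite domain}---has IP, since it interprets its fraction field $\Ff_q((t))$, which has IP by \cite[Theorem~4.3]{KSW}. Moreover, even for NIP rings, \cite[Corollary~2.5]{dp-min-domains} needs dp-minimality to linearly order the primes; NIP alone only gives finitely many maximal ideals. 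Since Fact~\ref{F:profinite domain} carries no model-theoretic hypothesis, locality of $A$ cannot come ``for free'' from model theory; the paper obtains it from the structure theorem that a commutative profinite ring is a product of local profinite rings \cite[Exercise 5.1.3(2)]{profinitegroups}, which a domain cannot decompose non-trivially under.

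Second, even granting that $A$ is local with linearly ordered prime ideals, your inference that the principal ideals $xA$ and $aA$ are comparable is false: in $R = \Zz + 27\Zz_3[i]$ of Example~\ref{ex-1} (a local dp-minimal domain, hence with linearly ordered primes), the ideals $27R$ and $27iR$ are incomparable, since neither $i$ nor $-i$ lies in $R$. So your route to ``every nonzero $x$ divides a power of $a$'', and hence to $A[a^{-1}] = \Frac(A)$, does not go through as written. (The conclusion is salvageable once locality is properly established: for a nonzero non-unit $x$, the finite ring $A/xA$ is local, the image of $a$ lies in its maximal ideal because $aA + xA \neq A$, and the maximal ideal of a finite local ring is nilpotent, so $a^k \in xA$; but that is a different argument from yours.) Two smaller gaps: $\bigcap_n a^n A = 0$ is asserted rather than proved (it does hold: if $0 \neq x$ lay in the intersection, the images of the chain $(a^nA)_n$ in the finite ring $A/xA$ would stabilize, giving $a^N A = a^{N+1}A$ and hence $a$ a unit), and continuity of inversion---needed before one may quote the classification of locally compact \emph{fields}---is never addressed; the paper gets a genuine field topology from \cite[Theorem 2.2(b)]{PrZi}, again using that $A$ is local. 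Note that the paper's proof sidesteps your middle step entirely: it topologizes $F$ using all non-zero ideals of $A$ and their $F$-translates as a basis \cite[Example 1.2]{PrZi} and gets local compactness from the covering of $F$ by the compact open translates $A+b$, so it never needs $F = A[a^{-1}]$.
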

\begin{proof}
  We first make some observations.
  \begin{enumerate}
  \item If $a \in A \setminus \{0\}$, then the principal ideal $aA$ is closed because it is the image of the compact set $A$ under the continuous function $f(x) = ax$.  Since $aA$ has finite index, $aA$ is a clopen ideal.
  \item Every non-zero ideal $I$ contains a principal ideal, and
    therefore is clopen and has finite index.
  \item Conversely, the zero ideal is not open, since $A$ is infinite.
  \item The set of units $A^\times$ is closed, being the projection of the compact set $\{(x,y) \in A^2 : xy=1\}$ onto the first coordinate.  If $A$ is a field, this contradicts the previous point, so $A$ is not a field.      
  \item By \cite[Proposition 5.1.2]{profinitegroups}, the open ideals
    form a neighborhood basis of $0$.  By the points above, the non-zero ideals of $A$ form a neighborhood basis of $0$, and the family
    \begin{equation*}
      \{I+a : I \trianglelefteq A \text{ a non-zero ideal},\, a\in A\}
    \end{equation*}
    is a basis for the topology.
  \item Every commutative profinite ring is a cartesian product of profinite local rings \cite[Exercise 5.1.3(2)]{profinitegroups} so as $A$ is an integral domain it must be a local profinite ring.
  \end{enumerate}
    Let $F=\Frac (A)$.    By \cite[Example 1.2]{PrZi}, the family
    \begin{equation*}
      \{I+a : I \trianglelefteq A \text{ a non-zero ideal},\, a\in F\}
    \end{equation*}
    is a basis for a non-discrete Hausdorff ring topology on $F$.
    Moreover, this ring topology is a field topology because $A$ is
    local; see the proof of \cite[Theorem 2.2(b)]{PrZi}.  Taking $I =
    A$ we see that $A$ is clopen in this topology.  By the
    observations above, the induced topology on $A$ is the original
    compact Hausdorff topology on $A$.  Then $F$ is a locally compact
    field, because it is covered by the compact open translates $A+b$.

    As $F$ is a locally compact field, it is isomorphic to $\mathbb{R}$, $\mathbb{C}$, a finite extension of $\mathbb{Q}_p$, or $\mathbb{F}_q((t))$ \cite[Remark 7.49]{Milne}. Since $\mathbb{R}$ and $\mathbb{C}$ have no compact subrings it must be one of the latter two.
\end{proof}

\begin{corollary}\label{C:R/R00 is Qp in some NIP}
    Let $R$ be a sufficiently saturated NIP integral domain with $R^{00}$ a prime ideal satisfying $R^{00}=aR^{00}$ for any $a\in R\setminus R^{00}$.  Then $\Frac \left( R/R^{00}\right)$ is either finite or a finite extension of $\mathbb{Q}_p$.
\end{corollary}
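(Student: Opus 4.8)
The plan is to apply Fact~\ref{F:profinite domain} to the profinite ring $A := R/R^{00}$ and then to eliminate the $\mathbb{F}_q((t))$ alternative using NIP. First, since $R^{00}$ is a (proper) prime ideal, $A$ is an integral domain. If $A$ is finite, then $A$ is a field, so $\Frac(A) = A$ is finite and there is nothing more to prove; so assume $A$ is infinite. By Fact~\ref{profinite-r/r00}, $A = R/R^{00}$ is profinite, hence an infinite compact Hausdorff integral domain.

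Next I would verify the remaining hypothesis of Fact~\ref{F:profinite domain}, namely that every non-trivial principal ideal of $A$ has finite index. Let $\bar a \in A \setminus \{0\}$, say $\bar a$ is the image of $a \in R \setminus R^{00}$. The standing hypothesis gives $R^{00} = aR^{00} \subseteq aR$, so $aR + R^{00} = aR$ and therefore the principal ideal $\bar a A$ of $A$ equals $aR/R^{00}$. Now $aR$ is definable in $R$ (it is the image of the definable set $R$ under multiplication by $a$), so both $aR$ and $R \setminus aR$ are type-definable, and hence $aR/R^{00}$ is a clopen subgroup of the compact group $A = R/R^{00}$ in the logic topology; a clopen subgroup of a compact group has finite index. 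Thus every non-zero principal ideal of $A$ has finite index, and Fact~\ref{F:profinite domain} applies: $F := \Frac(A)$ is isomorphic, as a field, to a finite extension of $\mathbb{Q}_p$ or to $\mathbb{F}_q((t))$ for some prime power $q$.

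Finally I would rule out the case $F \cong \mathbb{F}_q((t))$. The field $F$ is interpretable in $R$: it is the quotient of $R \times (R \setminus R^{00})$ by a type-definable equivalence relation, a construction living in the hyperimaginary sorts of $R$, and NIP is inherited by such bounded quotients, so $F$ is NIP as a pure field. But $\mathbb{F}_q((t))$ is not NIP: it is an infinite field of positive characteristic $p$ which is not Artin--Schreier closed — a root of $x^p - x - t^{-1}$ generates a proper Artin--Schreier extension — contradicting the theorem of Kaplan--Scanlon--Wagner that an infinite NIP field of characteristic $p$ has no Artin--Schreier extensions. Hence $F$ is a finite extension of $\mathbb{Q}_p$, as claimed.

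The only delicate steps are in this last paragraph: the passage from ``$R$ is NIP'' to ``$\Frac(R/R^{00})$ is NIP'', which relies on the standard fact that NIP survives passage to hyperimaginary sorts (quotients by bounded type-definable equivalence relations), and the appeal to Artin--Schreier closedness of infinite NIP fields to separate $\mathbb{F}_q((t))$ from the finite-extension-of-$\mathbb{Q}_p$ alternative. Everything preceding that is routine bookkeeping with Facts~\ref{profinite-r/r00} and~\ref{F:profinite domain} together with the hypothesis $R^{00} = aR^{00}$, which enters only to guarantee $R^{00} \subseteq aR$.
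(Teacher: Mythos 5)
Most of your argument coincides with the paper's: the reduction to Fact~\ref{F:profinite domain} via Fact~\ref{profinite-r/r00}, the verification that nonzero principal ideals of $R/R^{00}$ have finite index using $R^{00}=aR^{00}\subseteq aR$ (your clopen-subgroup phrasing is equivalent to the paper's observation that the definable subgroup $aR$ contains $R^{00}$ and hence has finite index in $R$), and the appeal to Kaplan--Scanlon--Wagner to exclude $\Ff_q((t))$ are all in line with the paper.

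The gap is the step ``$\Frac(R/R^{00})$ is NIP.'' A quotient by a bounded type-definable equivalence relation is a hyperimaginary, and hyperimaginaries do not carry an induced first-order structure; there is no standard theorem asserting that the abstract ring $R/R^{00}$ (let alone its fraction field), viewed as a discrete structure in the ring language, has NIP theory merely because $R$ does. A priori $R/R^{00}$ could be a profinite ring such as $\Ff_q[[t]]$, whose pure ring theory has IP---ruling out exactly this kind of outcome is the content of the corollary, so invoking ``NIP passes to bounded hyperimaginary quotients'' is at best an unproved claim and at worst circular. The primality hypothesis is used in the paper for more than making the quotient a domain: by Fact~\ref{F:prime ideals are ext}, every prime ideal is externally definable (no NIP needed), so after passing to the Shelah expansion---which preserves NIP, and even dp-rank by Fact~\ref{external-dp}---the ideal $R^{00}$ becomes definable, and then $R/R^{00}$ and $\Frac(R/R^{00})$ are honestly interpretable in an NIP structure, hence NIP as pure ring and field. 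Replace your hyperimaginary argument with this external-definability argument and the proof goes through; everything else is essentially the paper's proof (your Artin--Schreier computation is just an explicit form of the cited fact that $\Ff_q((t))$ has IP).
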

\begin{proof}
    Since $R^{00}$ is a prime ideal, $R/R^{00}$ is a compact Hausdorff integral domain; assume it is infinite. Note that every nonzero principal ideal in $R/R^{00}$ has finite index.  Indeed if $a\in R\setminus R^{00}$ then $aR\supseteq (aR)^{00}=aR^{00}=R^{00}$, so $aR$ has finite index in $R$. By Fact \ref{F:profinite domain}, $\Frac (R/R^{00})$ is either a finite extension of $\mathbb{Q}_p$ or $\mathbb{F}_q((t))$. Since $R^{00}$ is a prime ideal it is externally definable, by Fact \ref{F:prime ideals are ext}.  Then $R/R^{00}$ and hence $\Frac (R/R^{00})$ is NIP.  Since $\Ff_q((t))$ has IP \cite[Theorem~4.3]{KSW}, $\Frac(R/R^{00})$ must instead be a finite extension of $\Qq_p$.
\end{proof}

\begin{corollary}\label{C:R/R00 is Qp in dp-min}
    Let $R$ be a sufficiently saturated dp-minimal integral domain with maximal ideal $\mm$. If $R/\mm$ is finite and $R^{00}$ is a prime ideal then  $\Frac \left( R/R^{00}\right)$ is either finite or a finite extension of $\mathbb{Q}_p$.
\end{corollary}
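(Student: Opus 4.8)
The plan is to reduce to Corollary~\ref{C:R/R00 is Qp in some NIP}. That result already gives the desired dichotomy for any sufficiently saturated NIP domain in which $R^{00}$ is prime and $aR^{00} = R^{00}$ holds for every $a \in R \setminus R^{00}$. Since dp-minimal rings are NIP and primality of $R^{00}$ is part of the hypothesis here, the only thing that needs checking is the multiplicativity condition $aR^{00} = R^{00}$ for $a \notin R^{00}$; once that is in hand the earlier corollary applies verbatim (with $\Frac(R/R^{00})$ allowed to be finite when $R/R^{00}$ is itself finite).

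First I would dispose of the trivial configurations and identify $R^{00}$. Dp-minimality forces $R$ to be infinite, so the hypothesis that $R/\mm$ is finite gives $\mm \neq (0)$; in particular $R$ is not a field and $\mm$ is exactly its set of non-units. Since $\mm$ has finite index in $(R,+)$ while $R^{00}$ has bounded index, one gets $R^{00} \subseteq \mm$, and comparing type-definable bounded-index subgroups of $(R,+)$ and of $(\mm,+)$ yields $R^{00} = \mm^{00}$ --- this is exactly the remark already made inside the proof of Proposition~\ref{Proposition3.14delbeehalevi}(2).

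Next I would verify $aR^{00} = R^{00}$ for $a \in R \setminus R^{00}$. If $a$ is a unit this is immediate, so assume $a \in \mm \setminus \mm^{00}$. As $\mm^{00} = R^{00}$ is prime it equals its own radical, so $a \notin \sqrt{\mm^{00}}$; hence by the $\sqrt{\mm^{00}}$ dichotomy (Proposition~\ref{Proposition3.14delbeehalevi}(2)) the quotient $R/aR$ is finite. Therefore $aR$ is a finite-index subgroup of $(R,+)$, so $R^{00} \subseteq aR$ and $(aR)^{00} = R^{00}$; and since multiplication by $a$ is an isomorphism of additive groups $R \to aR$ carrying $R^{00}$ to $(aR)^{00}$, we conclude $aR^{00} = (aR)^{00} = R^{00}$. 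Feeding this into Corollary~\ref{C:R/R00 is Qp in some NIP} finishes the proof.

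The argument is essentially bookkeeping with bounded-index subgroups together with one genuine input, namely the $\sqrt{\mm^{00}}$ dichotomy of Proposition~\ref{Proposition3.14delbeehalevi}; the only point requiring a moment's thought is the implication ``$R^{00}$ prime $\Rightarrow$ $R/aR$ finite for every $a \notin R^{00}$'', which is where primality actually gets used, so I would regard that as the main (mild) obstacle.
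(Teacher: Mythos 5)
Your proof is correct, and it follows the same overall architecture as the paper's: both reduce to Corollary~\ref{C:R/R00 is Qp in some NIP}, so the only substantive point is verifying $aR^{00}=R^{00}$ for every $a\in R\setminus R^{00}$. The difference lies in how that condition is checked. The paper first disposes of the case where $R/R^{00}$ is finite, notes that otherwise $R^{00}$ is a \emph{non-maximal} prime, and invokes Corollary~\ref{C:prime-conn}: the localization $R_{R^{00}}$ is a valuation overring with maximal ideal $R^{00}$, so every $a\notin R^{00}$ is a unit of that overring and therefore stabilizes $R^{00}$. You instead stay inside $R$: from $R/\mm$ finite you get $R^{00}=\mm^{00}$, primality gives $\sqrt{\mm^{00}}=\mm^{00}$, so a non-unit $a\notin R^{00}$ lies outside $\sqrt{\mm^{00}}$ and Proposition~\ref{Proposition3.14delbeehalevi}(2) yields $R/aR$ finite, whence $aR^{00}=(aR)^{00}=R^{00}$ --- exactly the bookkeeping already carried out in the converse direction of the proof of that proposition. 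Both routes use only material established before this corollary, so there is no circularity; yours avoids the finite/infinite case split and the valuation-theoretic input of Corollary~\ref{C:prime-conn}, at the cost of re-invoking the $\sqrt{\mm^{00}}$ dichotomy, whereas the paper's version isolates the structural mechanism (elements outside $R^{00}$ are units of the valuation overring $R_{R^{00}}$) that it reuses later, e.g.\ in Proposition~\ref{prop-alignment}.
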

\begin{proof}
    If $R/R^{00}$ is finite then it is a finite integral domain hence a finite field. Assume that $R/R^{00}$ is infinite.

    In particular, $\pp=R^{00}$ is a non-maximal prime ideal so the maximal ideal of the valuation overring $R_\pp$ of $R$ (Corollary \ref{C:prime-conn}). This implies that $aR^{00}=R^{00}$ for any $a\in R_\pp\setminus R^{00}$, hence in particular for all $a\in R\setminus R^{00}$. The result now follows from Corollary \ref{C:R/R00 is Qp in some NIP}.
\end{proof}

\subsection{Subrings from additive subgroups} \label{ss:additive-subgroup}
In this section, we prove the existence of an infinite definable proper subring in dp-minimal
structures $(K,+,\cdot,A)$, where $(K,+,\cdot)$ is
a field and $A$ is an infinite  additive
proper subgroup, without any saturation assumption.  This will be used later.

\begin{proposition} \label{P:A-bdd}
  Let $M=(K,+,\cdot,A)$ be a dp-minimal expansion of field with $A$ a proper infinite additive subgroup.  Then $R=A:A=\{x\in K:xA\subseteq A\}$ is an infinite definable proper subring of $K$ and $\Frac (R)= K$.
\end{proposition}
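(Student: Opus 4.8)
The plan is to prove three things in order: $R := A:A$ is a definable subring of $K$ containing $1$ with $R \neq K$; if $R$ is infinite then $\Frac(R) = K$; and $R$ is infinite. The first is routine: $R$ is defined by $\forall y\,(y \in A \to xy \in A)$, contains $0$ and $1$, and for $x,x' \in R$ satisfies $(x+x')A \subseteq xA + x'A \subseteq A$ and $(xx')A \subseteq xA \subseteq A$; and if $R = K$ then $A$ is an ideal of the field $K$, so $A \in \{\{0\},K\}$, contradicting that $A$ is infinite and proper.

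For the second, no saturation is needed. The additive group $(K,+)$ is a definable, hence abelian dp-minimal, group. Given $y \in K^\times$, Fact~\ref{almost-comparable}(2) applied to the definable subgroups $R$ and $yR$ says that $R/(R \cap yR)$ or $yR/(R \cap yR)$ is finite; if $R$ is infinite then both $R$ and $yR$ are infinite, so $R \cap yR$ is infinite, in particular nonzero, and any $0 \neq r = ys \in R \cap yR$ (with $s \in R\setminus\{0\}$) witnesses $y = r/s \in \Frac(R)$. Hence $\Frac(R) = K$ and only infiniteness of $R$ remains.

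If $\characteristic(K) = 0$ then $\Zz \subseteq R$, so assume $\characteristic(K) = p > 0$ and pass to a monster model (infiniteness of the definable set $R$ transfers both ways). Let $B := A^{00}$ be the connected component of the dp-minimal group $(A,+)$: a type-definable subgroup of $A$ of unbounded cardinality with $B \neq \{0\}$ and $B \neq K$. Exactly as in the proof of Claim~\ref{claim_comparabledp-mini} --- applying Fact~\ref{almost-comparable}(1) to the type-definable subgroups $B$ and $xB$ of $(K,+)$ and using that neither has a proper type-definable subgroup of bounded index --- the set $\Oo := B:B = \{x \in K : xB \subseteq B\}$ is a valuation ring on $K$, hence infinite (indeed $\Oo \supseteq B$ is of unbounded cardinality). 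Also $R \subseteq \Oo$: for $x \in R$, the inclusion $xA \subseteq A$ of definable subgroups gives $xB = x(A^{00}) = (xA)^{00} \subseteq A^{00} = B$ (using $H^{00} \subseteq G^{00}$ for definable $H \subseteq G$, and that $a \mapsto xa$ is a definable isomorphism), so $x \in \Oo$.

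The remaining and decisive point is that $R$ has \emph{bounded} index in $\Oo$ (the index may be infinite, but stays below the degree of saturation); then $R$ has unbounded cardinality, hence is infinite, and infiniteness descends to the original model. The natural route is to write $R = \Oo \cap \bigcap_{i \in I} a_i^{-1}A$, where the $a_i + B$, $i \in I$, enumerate the bounded set of nonzero cosets of $B$ in $A$, and to bound each $[\Oo : \Oo \cap a_i^{-1}A]$ using that $\Oo \cap a_i^{-1}A$ contains the type-definable fractional ideal $a_i^{-1}B$ of the valuation ring $\Oo$, together with the fact that the $\Oo$-submodules of $K$ are totally ordered. I expect this bounded-index step to be the main obstacle: it is essentially the statement that a dp-minimal expansion of a field cannot carry a finite subring of the form $A:A$ with $A$ an infinite additive subgroup. (Note that such an $A$ must have infinite index in $(K,+)$ anyway, since a finite-index infinite $A$ would make $R$ act faithfully on the finite group $K/A$ --- the kernel of $R \to \End(K/A)$ is $\{r : rK \subseteq A\} = \{0\}$ because $rK = K$ for $r \neq 0$ --- and hence be finite, contradicting the conclusion, so part of the work is to exclude finite index.) A direct construction of a depth-$2$ ict-pattern from a hypothetical finite $R = \Ff_q$ with infinite $A$ is a plausible alternative to the valuation-ring analysis.
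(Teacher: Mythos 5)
Your steps (i) ($R$ is a definable subring, proper, containing $1$) and (ii) ($\Frac(R)=K$ once $R$ is infinite, via Fact~\ref{almost-comparable}(2) applied to $R$ and $yR$) are correct, and (ii) is a nice, slightly more explicit alternative to the paper's one-line argument that $[K:\Frac(R)]=1$. The characteristic-$0$ case of infiniteness ($\Zz\subseteq R$) is also fine, as is the observation that $\Oo=B:B$ is a valuation ring containing $R$ (the same argument as Claim~\ref{claim_comparabledp-mini}; note only that your parenthetical ``$\Oo\supseteq B$'' is unjustified, since $B\cdot B\subseteq B$ is not known---but $\Oo$ is infinite anyway, being a valuation ring of the infinite field $K$).

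The genuine gap is exactly the step you flag yourself: infiniteness of $R$ in positive characteristic. You reduce it to the claim that $R$ has \emph{bounded} index in $\Oo=B:B$, but that claim is false in general, so the sketched route cannot be completed. Take $A$ to be (the additive group of) the ring $R$ of Example~\ref{ex-2} inside a sufficiently saturated model of $\Th(\Ff_3^{\alg}((t^{\Qq})))$; the expansion $(K,+,\cdot,A)$ is dp-minimal by Proposition~\ref{prop-construction}. Then $A:A=R$ (since $1\in R$), $B=A^{00}=R^{00}$ is the ``ball'' $\{x:v(x)>\pi\}$, so $B:B$ contains the full Hahn valuation ring $\Oo_v$, while $R/R^{00}$ is finite and $\Oo_v/R^{00}$ is unbounded; hence $[B:B \,:\, R]$ is unbounded even though $R$ is infinite. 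So bounded index in $\Oo$ is the wrong invariant, and your fallback remark (``bound $[\Oo:\Oo\cap a_i^{-1}A]$ using the $\Oo$-submodule $a_i^{-1}B$'') fails for the same reason: that submodule can be a proper ideal of $\Oo$ of unbounded index. The paper handles this step with an input you do not have: by \cite[Theorem~1.3]{JohCanonical} the (non--strongly-minimal) dp-minimal field $(K,+,\cdot,A)$ carries a canonical definable V-topology in which $A=A-A$ is a neighborhood of $0$; after passing to the Shelah expansion one gets a definable valuation inducing it, a depth-$2$ ict-pattern argument shows $A$ is \emph{bounded}, and then boundedness plus being a neighborhood of $0$ yields an infinite open set $V$ with $V\cdot A\subseteq A$, i.e.\ $V\subseteq R$. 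Some such tool (or a genuinely new argument, e.g.\ your suggested direct ict-pattern from a hypothetical finite $R$, which you do not carry out) is needed; as it stands the heart of the proposition is missing.
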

\begin{proof}
  There is no harm in assuming sufficient saturation of $M$, also we will later pass to the Shelah expansion; this too causes no harm.
  
  The definable set $A$ is neither finite nor cofinite, so $(K,+,\cdot,A)$ is not strongly minimal.

  By \cite[Theorem~1.3]{JohCanonical},  $M$ has a Hausdorff non-discrete definable V-topology called the \emph{canonical topology}. The canonical topology is characterized by the fact that \[ \{D - D : D \subseteq K \text{ definable and infinite}\}\] is a neighborhood basis of 0, where $D-D = \{x-y : x,y \in D\}$.  Taking $D = A$, we see that $A = A-A$ is a neighborhood of 0.
  
  By saturation, there exists an externally definable valuation inducing this topology (\cite[Proposition 3.5]{HaHaJaVtop}). So after passing to the Shelah expansion, we may assume the existence of a definable valuation $v$ inducing this topology. Let $\Gamma$ be its value group.

  \begin{claim}
      $A$ is a bounded set with respect to topology induced by $v$.
  \end{claim}
  \begin{claimproof}
     Note that $(K,+)^0$ is an ideal in $K$ of small index, so $(K,+)^0 =(K,+)$ and so $K$ has no definable proper subgroups of finite index.
   Therefore $K/A$ is infinite. Let $(c_i)_{i<\omega}$ be different representatives of distinct classes.
  
      Assume towards a contradiction that $A$ is not bounded, so for any $\gamma\in \Gamma$ there exists an element  $a\in A$ with $v(a)<\gamma$.

    By saturation, and the fact that A is infinite, we can find a sequence of elements  $(a_s)_{s<\omega}$ of $A$, satisfying $v(a_s)<v(a_t)<v(c_i)$ for any $i<\omega$ and any $t<s<\omega$.

    Now consider the formulas $(x\in A+c_i)_{i<\omega}$ and $(v(x)=v(a_s))_{s<\omega}$; they contradict dp-minimality.
  \end{claimproof}
  
  The definable set $R$ is certainly a definable subring of $K$, because $A$ is an additive subgroup. 
  
  Since $A$ is a bounded neighborhood of 0, there is an open neighborhood $V \ni 0$ such that $V \cdot A \subseteq A$.  Then $R$ contains the infinite set $V$.

  The ring $R$ is a proper subring because if we take $a\in A\setminus \{0\}$ and $b\notin A$ then $b/a\notin R$. Finally, $\Frac (R) =K$, since $(K,\Frac (R))$ is dp-minimal so $[K:\Frac (R)]=1$.
  \end{proof}

\section{A trichotomy for dp-minimal fields} \label{sec:three-kinds}
\begin{definition}
  Let $K$ be an infinite field.
  \begin{enumerate}
  \item $K$ is \emph{ACVF-like} if there is a henselian valuation $v$
    on $K$ with algebraically closed residue field.
  \item $K$ is \emph{$p$CF-like} if there is a henselian valuation $v$
    on $K$ with finite residue field of
    characteristic $p$.
  \item $K$ is \emph{RCVF-like} if there is a henselian valuation $v$
    on $K$ with real closed residue field.
  \end{enumerate}
  We allow $v$ to be trivial.
\end{definition}
For example, $\Ff_p^{\alg}$ and $\Cc((t))$ are ACVF-like, $\Rr$ and
$\Rr((t))$ are RCVF-like, and $\Qq_3(i)$ and $\Qq_3((t))$ are
$3$CF-like.

If there is a henselian valuation $w$ on $K$ such that $Kw$ is
$p$-adically closed, then we can compose $w$ with the canonical
valuation on $Kw$, getting a henselian valuation $v$ on $K$ with $Kv$
finite of characteristic $p$.  Thus $K$ is $p$CF-like.

\begin{remark} \label{rem:trichotomy}
  If $K$ is a dp-minimal field, then $K$ is ACVF-like, $p$CF-like, or
  RCVF-like.  In fact, if $\Oo_\infty$ is the intersection of all
  0-definable valuation rings on $K$, then $\Oo_\infty$ is henselian
  and its residue field is algebraically closed, real closed, or
  finite.  See \cite[Theorem~4.8]{johnson21classificationdpmin} or Fact~\ref{F:appendix}.
\end{remark}

In fact, the three cases are mutually exclusive:
\begin{proposition} \label{three-cases}
  If $K$ is an infinite field, then $K$ satisfies at most one of the following:
  \begin{enumerate}
  \item $K$ is ACVF-like.
  \item $K$ is $p$CF-like.
  \item $K$ is RCVF-like.
  \end{enumerate}
 In the second case, $p$ is uniquely determined.
\end{proposition}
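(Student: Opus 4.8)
The plan is to separate two incompatibilities which call for rather different techniques: the incompatibility of RCVF-likeness with the other two conditions, which I would handle via formal reality, and the incompatibility of ACVF-likeness with $p$CF-likeness together with the uniqueness of $p$, which I would handle via the structure theory of henselian valuations.

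For the formal-reality part I would record three facts. (i) \emph{If $K$ is RCVF-like then $K$ is formally real}: this is the standard fact that a valuation with formally real residue field makes the field formally real — scaling a purported identity $-1 = \sum_j a_j^2$ by dividing through by an $a_i$ of least valuation and then reducing modulo the maximal ideal exhibits $-1$ as a sum of squares in $Kv$ (the $j=i$ summand reduces to $1$), contradicting real-closedness; only that $v$ is a valuation is used. (ii) \emph{If $K$ is ACVF-like then $K$ is not formally real}: let $v$ be henselian with $Kv$ algebraically closed; if the residue characteristic is $\ne 2$ then $X^2+1$ has a simple root modulo the maximal ideal, so $\sqrt{-1}\in K$ by Hensel, while if the residue characteristic is $2$ then either $\characteristic K = 2$ (and $K$ is trivially not formally real) or $\characteristic K = 0$, and Hensel lifts a root of $X^2+X+1$ to a $\zeta\in K$ with $(2\zeta+1)^2 = -3$, so $-3$ and $3 = 1^2+1^2+1^2$ are both sums of squares, hence so is their product $-9$, hence so is $-1 = -9/3^2$. (iii) \emph{If $K$ is $p$CF-like then $K$ is not formally real}: trivial if $\characteristic K = p$, and otherwise the restriction of the henselian valuation to $\Qq$ is equivalent to the $p$-adic valuation, so $K$ contains a henselization of $(\Qq, v_p)$, which is $p$-adically closed and hence (being elementarily equivalent to $\Qq_p$) not formally real, so $-1$ is already a sum of squares there. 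Facts (i)--(iii) show that RCVF-likeness excludes both ACVF-likeness and $p$CF-likeness.

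For the henselian part the key tool is a comparison lemma: \emph{if $v, w$ are henselian valuations on $K$ and $Kv$ is not separably closed, then $v$ and $w$ are comparable} (one valuation ring contained in the other). To prove it one may assume $v, w$ nontrivial; if they were independent, the classical independence theorem for henselian valuations (see, e.g., Engler--Prestel, \emph{Valued Fields}) would make $K$ separably closed, hence $Kv$ separably closed — a residue field of a valuation on a separably closed field is separably closed, since after lifting a monic separable polynomial its irreducible factors over $K$ are purely inseparable and hence must already be linear (else the reduction would be inseparable) — a contradiction. So $v$ and $w$ are dependent; passing to their finest common coarsening $u$ (again henselian), the induced valuations on $Ku$ are henselian with trivial common coarsening, so either one of them is trivial, meaning $v$ or $w$ coarsens the other, or they are independent, whence $Ku$ and therefore $Kv$ are separably closed — again a contradiction. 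Granting the lemma: if $K$ were both ACVF-like via $v$ (with $Kv$ algebraically closed) and $p$CF-like via $w$ (with $Kw$ finite), then since $Kw$ is finite — hence not separably closed — the lemma applies; if $\mathcal O_w\subseteq\mathcal O_v$ then $Kw$ is the residue field of a henselian valuation on the algebraically closed field $Kv$, hence algebraically closed (a residue field of a valuation on an algebraically closed field is algebraically closed, by the same lifting argument, every polynomial now splitting into linear factors), which is absurd for a finite field; while if $\mathcal O_v\subseteq\mathcal O_w$ then $Kv$ is the residue field of a valuation on the finite field $Kw$, which carries only the trivial valuation, so $Kv = Kw$ is finite, again absurd. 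Uniqueness of $p$ is the same argument: if $Kv$ and $Kw$ are finite of characteristics $p$ and $q$, comparability and the triviality of valuations on finite fields force $Kv = Kw$, so $p = q$.

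The main obstacle is getting the henselian-valuation part exactly right: invoking the independence theorem in the correct generality, and carrying out the finest-common-coarsening reduction together with the supporting bookkeeping (coarsenings and compositions of henselian valuations are henselian; a valuation ring contained in $\mathcal O_v$ descends to a valuation ring of $Kv$ with the expected residue field). A secondary point of care is the residue-characteristic-$2$ case of fact (ii) and the use of ``$\Qq_p$ is not formally real'' in fact (iii). It is worth stressing why the two halves of the argument cannot be merged: the would-be shortcut ``a residue field of a valuation on a real closed field is real closed'' is \emph{false} — the field of real algebraic numbers carries $p$-adic valuations with residue field $\overline{\Ff_p}$ — so the RCVF-like case genuinely must be routed through formal reality rather than through the comparison lemma.
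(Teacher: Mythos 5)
Your proof is correct, but it takes a partly different route from the paper's. The paper treats all three residue-field ``natures'' uniformly: given two henselian valuation rings $\Oo_1,\Oo_2$ with residue fields of different natures, it splits into the comparable case (where the induced henselian valuation $w$ on $k_2$ forces $k_1$ to have the same nature as $k_2$, using in particular that $(k_2,w)\models \mathrm{RCVF}$ when $k_2$ is real closed) and the incomparable case (pass to the join $\Oo_1\cdot\Oo_2$, whose residue field carries two independent nontrivial henselian valuations, hence is separably closed, forcing $k_1,k_2$ to be algebraically closed). Your valuation-theoretic half is essentially this same argument, repackaged as a comparison lemma (``henselian valuations are comparable unless a residue field is separably closed''), and it correctly disposes of ACVF vs.\ $p$CF and of the uniqueness of $p$. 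What you do differently is to peel off the RCVF-like case and exclude it against the other two via formal reality (sums of squares), which buys you independence from any facts about henselian valuations on real closed fields, at the cost of a separate arithmetic argument (including the residue-characteristic-$2$ analysis and the use of the henselization of $(\Qq,v_p)$). Both routes are sound; the paper's is shorter because the ``same nature'' bookkeeping covers real closed fields as well.

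One caveat: your closing assertion that the RCVF-like case ``genuinely must'' be routed through formal reality is mistaken. Your counterexample (the real algebraic numbers with a $p$-adic valuation, residue field $\overline{\Ff_p}$) involves a \emph{non-henselian} valuation: a real closed field is archimedean-orderable there, and in general a henselian valuation ring on a real closed field is convex with respect to the unique ordering, so its residue field \emph{is} real closed and its value group divisible. That is exactly the fact the paper uses when it writes that $(k_2,w)$ is a model of RCVF in the comparable case, so the comparison-lemma route does handle the RCVF-like case. This does not affect the correctness of your proof, only the claim about necessity.
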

\begin{proof}
  Otherwise, there are two henselian valuation rings $\Oo_1, \Oo_2$
  whose residue fields $k_1, k_2$ are of a different nature.  There
  are two cases:
  \begin{enumerate}
  \item The two valuations are comparable, say, $\Oo_1 \subseteq
    \Oo_2$.  Then $\Oo_2$ is a coarsening of $\Oo_1$, so $k_1$ is the
    residue field of some henselian valuation $w$ on $k_2$.  If $k_2$
    is finite, then $w$ is trivial, so $k_1 = k_2$ and $k_1$ is finite
    of the same characteristic as $k_1$.  If $k_2$ is algebraically
    closed or real closed, then $(k_2,w)$ is a model of ACVF or RCVF,
    respectively, and so $k_1$ is algebraically closed or real closed.
    In each case, $k_1$ has the same nature as $k_2$.
  \item The two valuations are incomparable.  Let $\Oo_3$ be the join
    $\Oo_1 \cdot \Oo_2$ and let $k_3$ be its residue field.  Then
    $\Oo_1$ and $\Oo_2$ induce independent henselian valuations $w_1,
    w_2$ on $k_3$, and the residue fields of $w_1$ and $w_2$ are $k_1$
    and $k_2$, respectively.  Because $k_3$ has two independent
    nontrivial henselian valuations, it must be separably closed
    \cite[Theorem~4.4.1]{PrEn}, and then the two residue fields $k_1$
    and $k_2$ must be algebraically closed
    \cite[Theorem~3.2.11]{PrEn}.  Then $k_1$ and $k_2$ again have the
    same nature.  \qedhere
  \end{enumerate}
\end{proof}

Moreover, everything
is determined by the complete theory of $(K,+,\cdot)$:
\begin{lemma}
  Let $K_1, K_2$ be two dp-minimal fields.  If $K_1 \equiv K_2$, then
  $K_1$ is ACVF-like (RCVF-like, $p$CF-like) if and only if $K_2$ is
  ACVF-like (RCVF-like, $p$CF-like).
\end{lemma}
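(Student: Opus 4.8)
The plan is to pin down the trichotomy by the single invariant $\Oo_\infty(K) := \bigcap\{\Oo : \Oo \text{ a } \emptyset\text{-definable valuation ring on } K\}$, using Remark~\ref{rem:trichotomy}. That remark says $\Oo_\infty(K)$ is a henselian valuation ring whose residue field $k_\infty(K)$ is algebraically closed, real closed, or finite. Since $\Oo_\infty(K)$ then witnesses that $K$ is ACVF-like, RCVF-like, or $p$CF-like respectively, and since these are mutually exclusive by Proposition~\ref{three-cases}, a dp-minimal field $K$ is ACVF-like (resp.\ RCVF-like, resp.\ $p$CF-like) if and only if $k_\infty(K)$ is algebraically closed (resp.\ real closed, resp.\ finite), the characteristic $p$ in the last case being that of $k_\infty(K)$. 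So it suffices to show that the kind of $k_\infty(K)$ — algebraically closed, real closed, or finite of a given characteristic — depends only on $\Th(K)$.

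The obstruction to doing this directly is that $\Oo_\infty(K)$ is only type-definable over $\emptyset$, being an infinite intersection of definable valuation rings, so there is no immediate sense in which $(K_1,\Oo_\infty)$ and $(K_2,\Oo_\infty)$ are elementarily equivalent. I would circumvent this with the Keisler--Shelah theorem: as $K_1 \equiv K_2$ there is an ultrafilter $U$ with $K_1^U \cong K_2^U$, so it is enough to prove that $k_\infty(K)$ and $k_\infty(K^U)$ have the same kind for every dp-minimal $K$. Fix an indexing $(\Oo_i)_{i \in I}$ of the $\emptyset$-definable valuation rings by their defining formulas; this indexing is the same for $K$ and $K^U$ since $K \equiv K^U$, and $\Oo_i(K^U) = (\Oo_i(K))^U$ as definable sets commute with ultrapowers. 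Two observations follow. First, $K \preceq K^U$ gives $\Oo_\infty(K^U) \cap K = \bigcap_i (\Oo_i(K^U) \cap K) = \Oo_\infty(K)$, and likewise for maximal ideals, so $k_\infty(K)$ embeds as a subfield of $k_\infty(K^U)$. Second, since ultrapowers do not commute with infinite intersections we only have $(\Oo_\infty(K))^U \subseteq \Oo_\infty(K^U)$; as both are valuation rings on $K^U$ (the latter by Remark~\ref{rem:trichotomy} applied to the dp-minimal field $K^U$), $\Oo_\infty(K^U)$ is a coarsening of $(\Oo_\infty(K))^U$, and therefore $(k_\infty(K))^U$ — the residue field of $(\Oo_\infty(K))^U$, by \Los{} — is the residue field of an induced valuation on $k_\infty(K^U)$.

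Given these, the case analysis is immediate, using that $k_\infty(K^U)$ is algebraically closed, real closed, or finite (Remark~\ref{rem:trichotomy} for $K^U$) and that residue fields of valuations on algebraically closed fields are algebraically closed (lift a monic polynomial and use that valuation rings are integrally closed). If $k_\infty(K)$ is algebraically closed, then $k_\infty(K^U)$ contains an algebraically closed subfield, hence is infinite and not formally real, hence algebraically closed. If $k_\infty(K)$ is real closed, then $k_\infty(K^U)$ has a characteristic-$0$ subfield, so it is not finite, and it cannot be algebraically closed, for then $(k_\infty(K))^U$ would be algebraically closed while being elementarily equivalent to the real closed field $k_\infty(K)$; so $k_\infty(K^U)$ is real closed. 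If $k_\infty(K)$ is finite of characteristic $p$, then $k_\infty(K^U)$ has characteristic $p$, hence is not real closed, and it cannot be algebraically closed, for then $(k_\infty(K))^U = k_\infty(K)$ (an ultrapower of a finite field) would be algebraically closed; so $k_\infty(K^U)$ is finite, necessarily of characteristic $p$. In every case $k_\infty(K)$ and $k_\infty(K^U)$ have the same kind, which finishes the argument.

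The main obstacle is precisely the type-definability of $\Oo_\infty$: were $\Oo_\infty$ $\emptyset$-definable the lemma would follow at once from $K_1 \equiv K_2$, and the whole point of passing to an ultrapower $K^U$ rather than an arbitrary elementary extension is that $K^U$ carries the two comparable valuations $(\Oo_\infty(K))^U$ and $\Oo_\infty(K^U)$, whose interaction supplies exactly the rigidity needed to transport the kind of the residue field.
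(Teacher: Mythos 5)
Your proof is correct, but it takes a genuinely different route from the paper's. The paper argues by amalgamation: given henselian valuation rings $\Oo_1,\Oo_2$ on $K_1,K_2$ witnessing their respective kinds, Robinson joint consistency over the common complete field theory produces a single field $(K_3,\Oo_1',\Oo_2')$ into which both valued fields embed elementarily; since ``henselian with algebraically closed / real closed / finite residue field'' is first order in the valued-field language, the two induced valuations on $K_3$ witness two of the three kinds, and Proposition~\ref{three-cases} forces them to coincide. You instead canonicalize the witness: using Remark~\ref{rem:trichotomy} you reduce the trichotomy class of $K$ to the kind of the residue field of the type-definable ring $\Oo_\infty$, and then transport this invariant across elementary equivalence via Keisler--Shelah, comparing $\Oo_\infty(K)$ with $\Oo_\infty(K^U)$ through the embedding $k_\infty(K)\hookrightarrow k_\infty(K^U)$ and the comparable pair $(\Oo_\infty(K))^U\subseteq \Oo_\infty(K^U)$, whose induced valuation on $k_\infty(K^U)$ has residue field $(k_\infty(K))^U$. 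Your two observations and the three-way case analysis check out (including the use of the standard fact that residue fields of valued algebraically closed fields are algebraically closed, and that an ultrapower of a finite field is that finite field), so the argument is complete. What the paper's route buys is brevity and lighter machinery: joint consistency replaces Keisler--Shelah, and no analysis of how $\Oo_\infty$ interacts with ultrapowers is needed. What your route buys is a sharper conclusion: the kind of the residue field of the canonical valuation $\Oo_\infty$ is itself an invariant of $\Th(K)$ that detects the trichotomy, rather than merely the existence of some witnessing valuation being transferable.
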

\begin{proof}
  Let $\Oo_i$ be a henselian valuation ring on $K_i$ whose residue
  field is algebraically closed, real closed, or finite.  By Robinson
  joint consistency, there is a structure $(K_3,\Oo_1',\Oo_2')$ with
  elementary embeddings $(K_i,\Oo_i) \hookrightarrow (K_3,\Oo_i')$ for
  $i = 1, 2$.  In particular, $\Oo_i'$ is a henselian valuation ring
  on $K_3$ whose residue field has the same nature as the residue
  field of $\Oo_i$.  By Proposition~\ref{three-cases}, the residue fields
  of $\Oo_1'$ and $\Oo_2'$ must have the same nature.  Therefore, the
  residue fields of $\Oo_1$ and $\Oo_2$ must have the same nature.
\end{proof}


\begin{remark} \label{ocan-00}
  Let $K$ be a sufficiently saturated dp-minimal field, possibly with extra structure.  Suppose that $K$ is $p$CF-like.
  Let $\Oo_{\can}$ be the intersection of all definable valuation
  rings on $K$.  By \cite[Theorem~4.7]{johnson21classificationdpmin},
  $\Oo_{\can}$ is henselian, definable, and has finite residue field,
  because $K$ is $p$CF-like rather than ACVF-like or
  RCVF-like.\footnote{The notation reflects the fact that $\Oo_{\can}$
  is the canonical henselian valuation ring on $K$.}  We claim that
  $\Oo_{\can}^{00}$ is the second largest prime ideal in $\Oo_{\can}$, and that $\Oo_{\can}/\Oo_{\can}^{00}$ is a ring of characteristic 0:

  The valuation ring $\Oo_{\can}$ is dp-minimal with finite residue
  field, so the interval $[-v(p),v(p)]$ in the value group of
  $\Oo_{\can}$ is finite by Fact~\ref{dpmvr}.  Then the value group is
  discretely ordered and so the maximal ideal $\mm_{\can}$ of
  $\Oo_{\can}$ is a principal ideal.  Since $\Oo_{\can}/\mm_{\can}$ is finite, $\Oo_{\can}^{00} = \mm_{\can}^{00}$ is
  the second largest prime ideal in $\Oo_{\can}$ by
  Lemma~\ref{lem-two-cases}.
  
  Because $\Oo_{\can}^{00}$ is a non-maximal prime
  ideal in $\Oo_{\can}$, the quotient $\Oo_{\can}/\Oo_{\can}^{00}$
  is an integral domain but not a field.  In particular, it is
  infinite.  By Corollary~\ref{C:R/R00 is Qp in dp-min}, the fraction
  field of $\Oo_{\can}/\Oo_{\can}^{00}$ must be a finite extension of
  $\Qq_p$, and must therefore have characteristic 0.
\end{remark}

\section{Proof of the main theorem}
\label{S:mainproof}

\subsection{Exceptional dp-minimal domains} \label{ss:exceptional}
An integral domain which is neither a field nor a valuation ring will be called \emph{exceptional}.  Our goal is to understand exceptional dp-minimal domains.  By \cite[Lemma 3.6(1)]{dp-min-domains}, any exceptional dp-minimal domain $R$ is a local domain with finite residue field $R/\mm$ . In particular, for such domains, $R^{00}=\mm^{00}$.  Note also that $R$ is infinite.

\begin{remark} \label{R:r-vs-o-exceptional}
  Let $R$ be an exceptional dp-minimal domain (possibly with extra structure).  If $\Oo$ is an externally definable valuation ring on $\Frac(R)$, then $R \subseteq \Oo$.  First, we may assume that $\Oo$ is definable by passing to $R^{Sh}$.  Then Fact \ref{F:dp-min R comparable to O} shows that $R \subseteq \Oo$ or $\Oo \subseteq R$.  The latter cannot happen as $R$ is not a valuation ring.
\end{remark}



Recall that every infinite dp-minimal field is either ACVF-like,
RCVF-like or $p$CF-like, and only one of these properties holds
(Remark~\ref{rem:trichotomy} and Proposition~\ref{three-cases}).
\begin{proposition}\label{P:acf-pcf-like cover}
  If $R$ is an exceptional dp-minimal domain, then $\Frac(R)$ is ACVF-like or $p$CF-like.
\end{proposition}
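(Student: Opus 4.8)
The fraction field $K=\Frac(R)$ has dp-rank $1$ by Fact~\ref{F:K-R-rank}. Since being exceptional, and the three ``$\ldots$-like'' properties of $\Frac(R)$, are invariant under elementary equivalence (being a field and being a valuation ring are elementary; ACVF-, $p$CF- and RCVF-likeness of a dp-minimal field depend only on its theory by the lemma following Proposition~\ref{three-cases}), I would first pass to a sufficiently saturated elementary extension and assume $R$ is sufficiently saturated. The plan is then to exhibit a henselian valuation on $K$ whose residue field is finite or algebraically closed, which is exactly to say that $K$ is $p$CF-like or ACVF-like. The valuation ring to look at is $\Oo:=\mm^{00}:\mm^{00}$; since $R$ is exceptional we have $R^{00}=\mm^{00}$, so $\Oo=R^{00}:R^{00}$, and by Proposition~\ref{Proposition3.14delbeehalevi}(1) (or Lemma~\ref{lem-O-vee-def}) this is a valuation overring of $R$, externally definable in $R$ by Lemma~\ref{lem-O-vee-def}. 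Hence the valued field $(K,\Oo)$, and so $\Oo$ itself, is dp-minimal, so $\Oo$ is henselian and its residue field $k:=\Oo/\mm_\Oo$ is finite or dp-minimal by Fact~\ref{dpmvr}. Finally $\mm^{00}$ is an ideal of $\Oo$, and a proper one since $\mm^{00}\subseteq\mm\subsetneq R\subseteq\Oo$, so $\mm^{00}\subseteq\mm_\Oo$. I would then split according to whether $\Oo$ has positive residue characteristic.

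If $\characteristic(k)>0$, the conclusion is almost immediate. If $k$ is finite, then $\Oo$ is itself a henselian valuation on $K$ with finite residue field, so $K$ is $q$CF-like for $q:=\characteristic(k)$. If $k$ is infinite, hence dp-minimal, then Remark~\ref{rem:trichotomy} applied to $k$ provides a henselian valuation $w$ on $k$ whose residue field is algebraically closed, real closed, or finite; the real closed option is impossible since $k$ has positive characteristic while real closed fields have characteristic $0$, and composing $w$ with the valuation of $\Oo$ then yields a henselian valuation on $K$ with algebraically closed or finite residue field.

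The substantial case is $\characteristic(k)=0$, where the $\sqrt{\mm^{00}}$-dichotomy of Subsection~\ref{ss:dichotomy} is exactly what is needed. Let $p$ be the (prime) characteristic of the finite field $R/\mm$, so $p\in\mm$. Since $\characteristic(k)=0$, the element $p$ is a unit of $\Oo$, so $p^n\notin\mm_\Oo\supseteq\mm^{00}$ for all $n\geq 1$; hence $p\notin\sqrt{\mm^{00}}$ while $p\in\mm$, so $\sqrt{\mm^{00}}\neq\mm$. By Lemma~\ref{lem-two-cases} we are therefore in its second case: there is $a\in\mm$ with $R/aR$ finite, and $\mm^{00}=P_a$ is a non-maximal prime ideal of $R$. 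Corollary~\ref{C:R00:R00} then identifies $\Oo$ with the localization $R_{\mm^{00}}$ and $\mm^{00}$ with $\mm_\Oo$, so $k=\Frac(R/\mm^{00})$; since $\mm^{00}$ is a non-maximal prime, $R/\mm^{00}$ is an infinite integral domain, and hence $k$ is a finite extension of $\Qq_p$ by Corollary~\ref{C:R/R00 is Qp in dp-min}. A finite extension of $\Qq_p$ is $p$-adically closed, so $\Oo$ is a henselian valuation on $K$ with $p$-adically closed residue field, and composing with the canonical valuation of $k$ (as in the paragraph preceding Remark~\ref{rem:trichotomy}) exhibits $K$ as $p$CF-like.

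The main obstacle is the branch $\characteristic(k)=0$: one has to see that the only way $\Frac(R)$ could fail to be ACVF-like or $p$CF-like is for a $p$-adic field to appear as the residue field of $\mm^{00}:\mm^{00}$, and pinning this down uses both the corrected $\sqrt{\mm^{00}}$-dichotomy and the locally-compact-field input behind Corollary~\ref{C:R/R00 is Qp in dp-min}. The positive-residue-characteristic branch is essentially formal once one knows that $\mm^{00}:\mm^{00}$ is a dp-minimal, hence henselian, valuation ring.
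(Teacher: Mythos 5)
Your proof is correct, but it takes a genuinely different route from the paper's. The paper only needs to rule out the RCVF-like case of the trichotomy (Remark~\ref{rem:trichotomy}) and does so by contradiction: if $\Frac(R)$ were RCVF-like, Ax--Kochen--Ershov makes it elementarily equivalent to a Hahn field $\Rr((t^\Gamma))$, so by \cite[Proof of Corollary 6.2]{JSW} it carries a definable field order; the convex hull of $\Zz$ is then an externally definable valuation ring, which must contain $R$ by Remark~\ref{R:r-vs-o-exceptional}, and saturation forces $R\subseteq[-n,n]$ for some $n$, absurd since $\Zz\subseteq R$. You instead directly exhibit a henselian valuation on $K$ with algebraically closed or finite residue field, built from $\Oo=\mm^{00}:\mm^{00}$: dp-minimality of the externally definable ring $\Oo$ gives henselianity and the residue-field alternatives from Fact~\ref{dpmvr}; in positive residue characteristic you apply the trichotomy only to the residue field and exclude the real closed option for characteristic reasons; in residue characteristic $0$ you run the $\sqrt{\mm^{00}}$ dichotomy (Lemma~\ref{lem-two-cases}) together with Corollaries~\ref{C:R00:R00} and \ref{C:R/R00 is Qp in dp-min} to see that the residue field is a finite extension of $\Qq_p$. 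Both arguments are complete; the paper's is shorter but leans on AKE and the JSW definability-of-orders result, while yours stays inside the Section~\ref{S:basic} toolkit, never needs the trichotomy or exclusivity statement for $K$ itself, and your characteristic-$0$ branch essentially reproves the ``conversely'' direction of Proposition~\ref{prop-alignment}, so organizing the material your way would allow that later proof to be shortened at the cost of doing the work here. The only step you state rather briskly is ``$(K,\Oo)$, and so $\Oo$ itself, is dp-minimal''; in a write-up you should justify it via Fact~\ref{external-dp} (pass to the Shelah expansion, where $\Oo$ becomes definable) together with the observation, used in Proposition~\ref{prop-construction} and footnote~1, that an infinite definable subring of a dp-minimal structure is dp-minimal as a pure ring.
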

\begin{proof}
  We may assume that $R$ is sufficiently saturated.  The fraction field $K = \Frac(R)$ is dp-minimal by Fact~\ref{F:K-R-rank}.  Suppose for the sake of contradiction that $K$ is RCVF-like.  Then $K$ admits a henselian valuation with real closed residue field.  By Ax-Kochen-Ershov, $K$ is elementarily equivalent to the Hahn series $L=\mathbb{R}((t^\Gamma))$ for some dp-minimal ordered abelian group $\Gamma$.  This field $L$ is orderable.  By \cite[Proof of Corollary 6.2]{JSW}, any field-order on $L$ is  definable. As a result there is a definable field-ordering $\leq$ on $K$ as well.  Then $(K,R,\leq)$ is dp-minimal. Let $\Oo$ be the convex hull of $\mathbb{Z}$ in $(K,\leq)$; it is an externally definable  valuation subring of $K$.  By Remark~\ref{R:r-vs-o-exceptional}, $R\subseteq \Oo$. By saturation, $R\subseteq [-n,n]$ for some integer $n$, which is absurd.
\end{proof}
The next proposition shows that the two cases of
Lemma~\ref{lem-two-cases} correspond to the case where $\Frac(R)$ is
ACVF-like and $p$CF-like.
\begin{proposition} \label{prop-alignment}
  Let $R$ be an exceptional dp-minimal domain, with maximal ideal
  $\mm$.
  \begin{enumerate}
  \item If $R/aR$ is infinite for every $a \in \mm$, then $\Frac(R)$
    is ACVF-like.
  \item If $R/aR$ is finite for some $a \in \mm$, then $\Frac(R)$ is
    $p$CF-like.
  \end{enumerate}
\end{proposition}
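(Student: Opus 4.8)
The plan is to prove implications (1) and (2) separately. Their hypotheses---``$R/aR$ infinite for every $a\in\mm$'' versus ``$R/aR$ finite for some $a\in\mm$''---are exhaustive and mutually exclusive, and ``ACVF-like'' and ``$p$CF-like'' are mutually exclusive by Proposition~\ref{three-cases}, so together with Proposition~\ref{P:acf-pcf-like cover} it is enough to establish (1) and (2) as stated. Since both hypotheses and both conclusions are unaffected by passing to a sufficiently saturated elementary extension of $R$ (the relevant statements about $R/aR$ are, for each bound on the index, first order, and $\Frac(R)$ is uniformly interpretable in $R$, dp-minimal by Fact~\ref{F:K-R-rank}, so whether it is ACVF-like or $p$CF-like depends only on $\Th(R)$), I may assume $R$---equivalently the interdefinable structure $M=(K,R,+,\cdot)$ with $K=\Frac(R)$---is sufficiently saturated. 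I will also use repeatedly that $R$ is exceptional, hence local with finite residue field $R/\mm$ and $R^{00}=\mm^{00}$, and that $R\subseteq\Oo$ for every externally definable valuation ring $\Oo$ on $K$ (Remark~\ref{R:r-vs-o-exceptional}).

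For (2), I would start from $a\in\mm$ with $R/aR$ finite. By Lemma~\ref{lem-two-cases}(2), $\mm^{00}=P_a$ is a non-maximal prime ideal, so $R^{00}=\mm^{00}=P_a$ is a non-maximal prime; by Corollaries~\ref{C:Pa-conn} and \ref{C:prime-conn}, $R_{P_a}$ is a henselian valuation ring on $K$ with maximal ideal $P_a$, hence with residue field $R_{P_a}/P_a\cong\Frac(R/R^{00})$. Now apply Corollary~\ref{C:R/R00 is Qp in dp-min} (using that $R/\mm$ is finite and $R^{00}$ is prime): $\Frac(R/R^{00})$ is finite or a finite extension of $\Qq_p$; it is infinite, since $R/R^{00}=R/P_a$ is an integral domain that is not a field, so it is a finite extension of $\Qq_p$, in particular $p$-adically closed. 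Thus $R_{P_a}$ is a henselian valuation on $K$ with $p$-adically closed residue field, and composing with the canonical valuation on that residue field (the observation just before Remark~\ref{rem:trichotomy}) gives a henselian valuation on $K$ with finite residue field of characteristic $p$, so $K$ is $p$CF-like.

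For (1), I would assume $R/aR$ is infinite for every $a\in\mm$, so $\sqrt{\mm^{00}}=\mm$ by Lemma~\ref{lem-two-cases}(1), and then suppose toward a contradiction that $K$ is not ACVF-like; by Proposition~\ref{P:acf-pcf-like cover}, $K$ is $p$CF-like. Since $M$ is dp-minimal, $p$CF-like, and sufficiently saturated, Remark~\ref{ocan-00} applies: the intersection $\Oo_{\can}$ of all definable valuation rings on $K$ is definable and henselian with finite residue field, $\qq:=\Oo_{\can}^{00}$ is the second-largest prime ideal of $\Oo_{\can}$, and $\Oo_{\can}/\qq$ has characteristic $0$. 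As $\Oo_{\can}$ is definable, hence externally definable, $R\subseteq\Oo_{\can}$. Because $\qq$ is the $00$-component of the NIP group $(\Oo_{\can},+)$ it has bounded index, so $R\cap\qq$ is a type-definable subgroup of $(R,+)$ of bounded index, whence $R^{00}\subseteq R\cap\qq$; since $R\cap\qq$ is a prime ideal of $R$ containing $R^{00}=\mm^{00}$, taking radicals in $R$ gives $\mm=\sqrt{\mm^{00}}\subseteq R\cap\qq\subseteq\mm$, so $R\cap\qq=\mm$ and $R/\mm$ embeds into $\Oo_{\can}/\qq$. But $R/\mm$ is a finite field, hence of positive characteristic, whereas $\Oo_{\can}/\qq$ has characteristic $0$ and therefore no nonzero subring of positive characteristic---a contradiction. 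Hence $K$ is ACVF-like.

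The main obstacle is part (1): it rests on the canonical-valuation machinery of Remark~\ref{ocan-00} (hence on \cite[Theorem~4.7]{johnson21classificationdpmin}) remaining valid when the extra predicate $R$ is present, on the bounded-index argument placing $R^{00}$ inside $\Oo_{\can}^{00}$, and on combining the $\sqrt{\mm^{00}}$-dichotomy with the characteristic-$0$ behaviour of $\Oo_{\can}/\Oo_{\can}^{00}$; the subtle point is that adding an external predicate for $R$ must not disturb $\Oo_{\can}$, which is why one works in (the Shelah expansion of) $M$ when invoking Remark~\ref{R:r-vs-o-exceptional}. Part (2) should be comparatively routine once the $\mm^{00}=P_a$ case of Lemma~\ref{lem-two-cases} and Corollary~\ref{C:R/R00 is Qp in dp-min} are available.
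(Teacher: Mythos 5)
Your proposal is correct and follows essentially the same route as the paper: part (1) is the contrapositive of the paper's argument via Remark~\ref{R:r-vs-o-exceptional}, Remark~\ref{ocan-00} and the characteristic-$0$ property of $\Oo_{\can}/\Oo_{\can}^{00}$, and part (2) is the paper's converse direction via Lemma~\ref{lem-two-cases}(2) and Corollary~\ref{C:R/R00 is Qp in dp-min}. The only (harmless) difference is that in (2) you work directly with the henselian valuation ring $R_{P_a}$ and the standard identification of its residue field with $\Frac(R/P_a)$, where the paper instead uses $\Oo = R^{00}:R^{00}$ (the same ring, by Corollary~\ref{C:R00:R00}) and a short dp-minimality argument to see that its residue field equals $\Frac(R/R^{00})$.
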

\begin{proof}
  Since $\Frac(R)$ is either ACVF-like or $p$CF-like by
  Proposition~\ref{P:acf-pcf-like cover}, it suffices to show that
  $\Frac(R)$ is $p$CF-like if and only if $R/aR$ is finite for some
  $a \in \mm$. There is no harm in assuming that $R$ is sufficiently saturated.

  First suppose that $\Frac(R)$ is $p$CF-like.  Let $\Oo_{\can}$ be
  the canonical henselian valuation ring, as in Remark~\ref{ocan-00}.  Then $\Oo_{\can}$ is a definable henselian
  valuation ring with finite residue field, the second largest prime
  ideal of $\Oo_{\can}$ is $\Oo_{\can}^{00}$, and the quotient ring
  $\Oo_{\can}/\Oo_{\can}^{00}$ has characteristic zero.

  By Remark~\ref{R:r-vs-o-exceptional}, as $R$ is exceptional we
  have $R\subseteq \Oo_{\can}$ hence $R^{00}\subseteq
  \Oo_{\can}^{00}\cap R$.  Note that $\Oo_{\can}^{00}\cap R$ is a
  prime ideal of $R$.  The quotient $R/(R \cap \Oo_{\can}^{00})$
  injects into $\Oo_{\can}/\Oo^{00}_{\can}$, so it has characteristic
  zero.  In contrast, $R/\mm$ is finite.  Therefore $R \cap
  \Oo_{\can}^{00} \ne \mm$.  Since $\mm^{00} = R^{00}$ is contained in
  the non-maximal prime ideal $R \cap \Oo_{\can}^{00}$, we have
  $\sqrt{\mm^{00}} \subsetneq \mm$.  By Lemma~\ref{lem-two-cases}(1),
  $R/aR$ is finite for some $a \in \mm$.

  Conversely, suppose that $R/aR$ is finite for some $a \in \mm$.
  By Lemma~\ref{lem-two-cases}(2), $R^{00} = \mm^{00}$ is a
  non-maximal prime ideal of $R$.  Then $k = \Frac(R/R^{00})$ is a
  finite extension of $\Qq_p$ by Corollary \ref{C:R/R00 is Qp in
    dp-min}. Let $\Oo_p$ be the standard definable valuation ring of $k$.

  Let $K=\Frac(R)$ and let $\Oo=R^{00}:R^{00}$; it is an externally definable valuation overring with maximal ideal $R^{00}$ (Corollary \ref{C:R00:R00}) so $\Oo/R^{00}$ is its residue field. Now using the quotient map, the structure $(\Oo/R^{00}, R/R^{00})$ is dp-minimal hence so is $(\Oo/R^{00}, \Frac(R/R^{00}))$. As $R/R^{00}$ is infinite, necessarily $\Frac(R/R^{00})=k$ must be equal to $\Oo/R^{00}$. Composing the valuation $\Oo_p$ on $k$ with $\Oo$ we are supplied with an externally definable valuation on $K$ with finite residue field; so $K = \Frac(R)$ is pCF-like.
\end{proof}
Combining with Lemma~\ref{lem-two-cases}, we get the following
dichotomy for exceptional dp-minimal domains:
\begin{corollary} \label{cor-two-cases}
  Let $R$ be an exceptional dp-minimal domain, with maximal ideal
  $\mm$.  Then exactly one of the following holds:
  \begin{enumerate}
  \item $\Frac(R)$ is ACVF-like, $R/aR$ is infinite for every $a \in
    \mm$, and $\sqrt{\mm^{00}} = \mm$.
  \item $\Frac(R)$ is $p$CF-like, $R/aR$ is finite for some $a \in
    \mm$, $\mm^{00}$ is the second-largest prime ideal in $R$, and
    $\mm^{00} = P_a$ for any $a \in \mm$ such that $R/aR$ is finite.
  \end{enumerate}
\end{corollary}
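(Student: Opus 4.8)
The plan is to obtain this corollary by straightforwardly assembling Propositions~\ref{P:acf-pcf-like cover} and \ref{prop-alignment} with Lemma~\ref{lem-two-cases}; no new argument is required, and as in those results we may assume $R$ is sufficiently saturated (so that $\mm^{00}$ and the ideals $P_a$ are available). First I would note that the two conditions ``$R/aR$ is infinite for every $a\in\mm$'' and ``$R/aR$ is finite for some $a\in\mm$'' are tautologically mutually exclusive and jointly exhaustive, so the proof splits into exactly these two cases.

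In the first case, Proposition~\ref{prop-alignment}(1) gives that $\Frac(R)$ is ACVF-like, and Lemma~\ref{lem-two-cases}(1) gives $\sqrt{\mm^{00}}=\mm$; thus clause (1) of the corollary holds, while clause (2) fails because its hypothesis ``$R/aR$ finite for some $a$'' is violated. In the second case, fix any $a\in\mm$ with $R/aR$ finite: Proposition~\ref{prop-alignment}(2) gives that $\Frac(R)$ is $p$CF-like, and Lemma~\ref{lem-two-cases}(2) gives that $\mm^{00}=P_a$ and that $\mm^{00}$ is the second-largest prime ideal of $R$; since $\mm^{00}$ does not depend on the choice of $a$, we get $\mm^{00}=P_a$ for every such $a$, so clause (2) holds and clause (1) fails (its hypothesis ``$R/aR$ infinite for every $a$'' is violated). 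This already yields that exactly one of (1), (2) holds.

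As a coda I would record the equivalences between the field-theoretic and arithmetic dichotomies, although they are not strictly needed for the statement: since a dp-minimal field is ACVF-like or $p$CF-like but not both (Propositions~\ref{P:acf-pcf-like cover} and \ref{three-cases}), the one-directional implications in Proposition~\ref{prop-alignment} upgrade to ``$\Frac(R)$ is ACVF-like $\iff$ $R/aR$ is infinite for every $a\in\mm$'' and ``$\Frac(R)$ is $p$CF-like $\iff$ $R/aR$ is finite for some $a\in\mm$''. There is no real obstacle here: all the substance lies in the quoted Proposition~\ref{prop-alignment} and Lemma~\ref{lem-two-cases}, and the only point requiring a moment's care is keeping track of which clause fails in each case so as to conclude the ``exactly one''.
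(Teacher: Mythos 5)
Your proposal is correct and matches the paper's route exactly: the paper states this corollary without proof, introducing it with the phrase ``Combining with Lemma~\ref{lem-two-cases}'', i.e.\ it is obtained precisely by splitting on the tautological dichotomy ``$R/aR$ finite for some $a\in\mm$'' versus ``infinite for all $a$'' and citing Proposition~\ref{prop-alignment} together with Lemma~\ref{lem-two-cases}, as you do. Your bookkeeping of which clause fails in each case (and the remark on upgrading the implications to equivalences via Proposition~\ref{three-cases}) is consistent with the paper's intent.
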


\begin{lemma}\label{L:restriction is non-trivial in A}
  Let $R_0$ be an exceptional dp-minimal integral domain, and let $K_0 = \Frac(R_0)$.  If $K_0$ is ACVF-like, then there is an externally definable non-trivial valuation ring in the structure $(K_0,R_0,+,\cdot)$.
\end{lemma}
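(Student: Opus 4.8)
The goal is to produce a non-trivial externally definable valuation ring on $K_0 = \Frac(R_0)$ when $K_0$ is ACVF-like and $R_0$ is exceptional. The natural candidate is the valuation ring $\Oo = R_0^{00} : R_0^{00}$ from Lemma~\ref{lem-O-vee-def}, which is automatically a $\vee$-definable and externally definable valuation overring of $R_0$; the content is showing that it is \emph{non-trivial}, i.e., $\Oo \ne K_0$. Assume for contradiction that $\Oo = K_0$. First I would pass to a sufficiently saturated elementary extension, so that all the machinery of Sections~\ref{ss:00-connected}--\ref{ss:dichotomy} applies and $R_0$ remains exceptional (hence local with finite residue field $R_0/\mm$, and $R_0^{00} = \mm^{00}$).

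**Main step.** Since $R_0$ is exceptional and ACVF-like, Corollary~\ref{cor-two-cases}(1) tells us that $R_0/aR_0$ is infinite for every $a \in \mm$ and that $\sqrt{\mm^{00}} = \mm$. The hypothesis $\Oo = K_0$ means that $x R_0^{00} \subseteq R_0^{00}$ for \emph{every} $x \in K_0$; in particular $R_0^{00} = \mm^{00}$ is an ideal of $K_0$, which forces $\mm^{00} = 0$ or $\mm^{00} = K_0$, and the latter is impossible since $\mm^{00} \subseteq \mm \subsetneq R_0$. So $\mm^{00} = 0$. Combined with $\sqrt{\mm^{00}} = \mm$ this gives $\mm = \sqrt{0} = 0$ (as $R_0$ is a domain), so $R_0$ is a field — contradicting exceptionality.

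Wait — that argument shows $\Oo$ is trivial only in the degenerate case; more care is needed, because $\Oo = K_0$ does not literally say $\mm^{00}$ is a $K_0$-submodule unless $\mm^{00} \ne 0$ forces it. Let me restructure: the cleaner route is to observe that because $R_0$ is exceptional and ACVF-like, the quotient $R_0/R_0^{00}$ is interesting. By Corollary~\ref{C:R00:R00}, $R_0/R_0^{00}$ is a dp-minimal ring, and by Fact~\ref{profinite-r/r00} it is profinite, hence a profinite integral domain (if $R_0^{00}$ were prime) — but in the ACVF-like case $\sqrt{\mm^{00}} = \mm$, so $R_0^{00}$ need not be prime, and $R_0/R_0^{00}$ is a profinite local ring whose nilradical has radical equal to the maximal ideal. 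The point I would extract: if $\Oo = R_0^{00}:R_0^{00}$ were all of $K_0$, then $R_0^{00}$ is closed under multiplication by $K_0^\times$, so $\{x \in K_0 : x R_0^{00} \subseteq R_0^{00}\} = K_0$ forces $R_0^{00}$ to be either $0$ or a $K_0$-subspace; since $R_0^{00} \subseteq \mm \subsetneq K_0$ we get $R_0^{00} = 0$, and then $\sqrt{\mm^{00}} = \sqrt{0} = 0 \ne \mm$ in a domain that isn't a field — contradicting Corollary~\ref{cor-two-cases}(1). Hence $\Oo \subsetneq K_0$, i.e. $\Oo$ is non-trivial, and it is externally definable by Lemma~\ref{lem-O-vee-def}.

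**Expected obstacle.** The subtle point is ensuring $\Oo \ne K_0$ rather than merely $\Oo \ne R_0$; this is exactly where ACVF-likeness (via $\sqrt{\mm^{00}} = \mm$ from Corollary~\ref{cor-two-cases}) is doing essential work, ruling out the case where $R_0^{00}$ is a non-maximal prime (which is the $p$CF-like case, where $\Oo$ is the localization $R_{0,\pp}$ and could in principle be large but is still a proper valuation ring — so even there non-triviality should hold, but the argument differs). I would double-check that in the ACVF-like case $R_0^{00} \ne 0$: since $R_0$ is infinite and $R_0/R_0^{00}$ is profinite, if $R_0^{00} = 0$ then $R_0$ itself is profinite, hence by Fact~\ref{F:profinite domain} (every nonzero principal ideal $aR_0$ has finite index — but this contradicts $R_0/aR_0$ infinite!) we get a contradiction directly. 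So in fact $R_0^{00} \ne 0$ in the ACVF-like exceptional case, and then $\Oo = K_0$ would make $R_0^{00}$ a nonzero $K_0$-subspace of $\mm$, absurd. This gives the result cleanly; the main work is just assembling these cited facts in the right order.
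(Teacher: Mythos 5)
Your candidate ring is the right one, and your verification that $\Oo = R^{00}:R^{00}$ is a proper subring of the fraction field of the saturated model is fine (a nonzero additive subgroup contained in $\mm$ cannot be a subspace over the fraction field). But there is a genuine gap: the lemma asks for a non-trivial externally definable valuation ring in the \emph{original}, possibly unsaturated structure $(K_0,R_0,+,\cdot)$ --- that is the entire point of the statement, since it feeds into Proposition~\ref{P:ext-val-2}, which is stated without any saturation hypothesis. Once you pass to a sufficiently saturated $R \succ R_0$, the ring $\Oo = R^{00}:R^{00}$ is a valuation ring on $K = \Frac(R)$, not on $K_0$; what descends to the original structure is the trace $\Oo_0 = \Oo \cap K_0$, which is externally definable in $(K_0,R_0)$ because $\Oo$ is externally definable in the big model. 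Showing $\Oo \subsetneq K$ does not show $\Oo_0 \subsetneq K_0$: the restriction of a non-trivial valuation to a small subfield can perfectly well be trivial (a priori $K_0$ could be entirely contained in $\Oo$), and nothing in your argument rules this out. You treat $\Oo$ throughout as if it were a subring of $K_0$, so this restriction step and its non-triviality are simply missing.

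This is exactly where the paper's proof does its work, and where $\sqrt{\mm^{00}} = \mm$ enters in an essentially different way from yours: take a nonzero $a$ in the maximal ideal $\mm_0$ of the original $R_0$; since $a \in \mm = \sqrt{\mm^{00}}$, some power $a^n$ lies in $\mm^{00} = R^{00}$, while $1 \notin R^{00}$; because $R^{00}$ is an ideal of the valuation ring $\Oo$, this forces $v(a^n) \ne v(1)$, so the valuation is already non-trivial on $K_0^\times$, whence $\Oo_0 \subsetneq K_0$. Your use of $\sqrt{\mm^{00}} = \mm$ only rules out $R^{00} = 0$ (which also follows at once from $R/R^{00}$ being bounded while $R$ is saturated), and that weaker fact does not suffice. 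As a side remark, your parenthetical appeal to Fact~\ref{F:profinite domain} is backwards --- ``every nonzero principal ideal has finite index'' is a hypothesis of that fact, not a consequence of profiniteness --- but this is harmless since $R^{00} \ne 0$ is true and easy; the real missing step is the non-triviality of the valuation on $K_0$.
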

\begin{proof}
  Let $R \succ R_0$ be a sufficiently saturated elementary extension, and
  let $K = \Frac(R)$.  Then $R$ is exceptional and $K$ is ACVF-like.
  Let $\Oo = R^{00}:R^{00}$ and $\Oo_0 = \Oo \cap K$ (see Lemma \ref{lem-O-vee-def}).  Then $\Oo_0$ is
  an externally definable valuation ring on $K_0$.  We only need to
  show that $\Oo_0$ is non-trivial.

  Let $\mm_0$ and $\mm$ be the maximal ideals of $R_0$ and $R$.  Take
  non-zero $a \in \mm_0 \subseteq \mm$.  By
  Corollary~\ref{cor-two-cases}, $\mm = \sqrt{\mm^{00}}$, and so $a^n
  \in \mm^{00} = R^{00}$ for some $n$.  On the other hand, $1 \notin
  \mm^{00}$.  Since $\mm^{00}$ is an ideal in $\Oo$, we must have
  $v(a^n) \ne v(1)$, where $v$ is the valuation on $K$ induced by
  $\Oo$.  Then the restriction of $v$ to $K_0$ is non-trivial, and
  $\Oo_0$ is non-trivial.
\end{proof}

By \cite[Proposition 3.5]{HaHaJaVtop}, every sufficiently saturated field with a definable V-topology admits a non-trivial externally definable valuation ring. The following gives the same conclusion by assuming the existence of a definable infinite proper additive subgroup, rather than assuming saturation.

\begin{proposition}\label{P:ext-val-2}
    Let $(K,+,\cdot,A)$ be a dp-minimal expansion of a field $K$ by an infinite, proper additive subgroup $(A,+)
  \subsetneq (K,+)$.  Then there is a non-trivial externally definable
  valuation ring on $K$.
\end{proposition}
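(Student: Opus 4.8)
The plan is to bootstrap from Proposition~\ref{P:A-bdd}, which already produces an infinite definable proper subring $R = A:A$ of $K$ with $\Frac(R) = K$. The goal is to promote this subring to an externally definable non-trivial valuation ring. First I would pass to a sufficiently saturated elementary extension $(K,+,\cdot,A)$; this is harmless since external definability and non-triviality of a valuation ring transfer down to elementary substructures (a non-trivial valuation ring stays non-trivial and stays externally definable). Now $R$ is an infinite definable proper subring of the dp-minimal field $K$, with fraction field $K$.

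The key step is to analyze $R$ as a ring. Since $(K, R, +, \cdot)$ is dp-minimal, $R$ is a dp-minimal integral domain, and by the results of Subsection~\ref{ss:basicbasic} it is a field or a local ring; since $R$ is proper with $\Frac(R) = K$, it is not a field, so it is a local dp-minimal domain, and $R$ is either a valuation ring or exceptional. If $R$ is a valuation ring, then $R$ itself is the desired non-trivial externally definable (indeed definable) valuation ring, and we are done. If $R$ is exceptional, I would invoke Corollary~\ref{cor-two-cases}: either $\Frac(R) = K$ is ACVF-like or it is $p$CF-like. In the ACVF-like case, Lemma~\ref{L:restriction is non-trivial in A} (applied with $R_0 = R$, after noting $R$ is already saturated, or more carefully applied to the small model and transferred up) produces an externally definable non-trivial valuation ring on $K$. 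In the $p$CF-like case, the proof of Proposition~\ref{prop-alignment} already constructs an externally definable valuation on $K = \Frac(R)$ with finite residue field, which is in particular non-trivial.

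The main obstacle I expect is the bookkeeping around saturation and external definability: Lemma~\ref{L:restriction is non-trivial in A} and the $p$CF-like argument in Proposition~\ref{prop-alignment} are stated for ``exceptional dp-minimal domains'' and implicitly pass to saturated extensions, so I must be careful that the valuation ring obtained is externally definable in $(K,+,\cdot,A)$ and not merely in some extension or in a larger language. The clean way to handle this is: work in $(K,R,+,\cdot)$ where $R$ is definable (hence this structure is a reduct of $(K,+,\cdot,A)$, so its externally definable sets are among those of $(K,+,\cdot,A)$); run the exceptional-domain machinery there; the output is an externally definable valuation ring on $K$ in $(K,R,+,\cdot)$, hence externally definable in $(K,+,\cdot,A)$. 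Since we assumed saturation at the outset, no further descent is needed, but one should double-check that Lemma~\ref{L:restriction is non-trivial in A} applies with $R_0$ taken to be the given (already saturated) $R$ — it does, since passing to an even more saturated extension only helps.

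Putting it together: after passing to a saturated model, $R = A : A$ is an infinite definable proper local dp-minimal subring with $\Frac(R) = K$. If $R$ is a valuation ring we are done. Otherwise $R$ is exceptional, and by Corollary~\ref{cor-two-cases} together with Lemma~\ref{L:restriction is non-trivial in A} (ACVF-like case) or the construction inside the proof of Proposition~\ref{prop-alignment} (pCF-like case), $K$ admits a non-trivial externally definable valuation ring in the reduct $(K,R,+,\cdot)$, hence in $(K,+,\cdot,A)$.
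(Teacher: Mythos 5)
Your route is essentially the paper's: Proposition~\ref{P:A-bdd} gives $R=A:A$, the valuation-ring case is immediate, and the exceptional case splits via the ACVF-like/$p$CF-like dichotomy, handled by Lemma~\ref{L:restriction is non-trivial in A} and by a finite-residue-field valuation respectively. The one genuine problem is the opening reduction. The claim that ``non-triviality of a valuation ring transfers down to elementary substructures'' is false in general: if $\Oo'$ is a non-trivial (externally definable) valuation ring on a saturated $K'\succ K$, then the trace $\Oo'\cap K$ is indeed an externally definable valuation subring of $K$, but it may well be all of $K$. For instance, the convex hull of $\Zz$ in a non-archimedean real closed field is a non-trivial valuation ring whose trace on an archimedean elementary submodel is the trivial valuation ring. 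This is precisely the issue that Lemma~\ref{L:restriction is non-trivial in A} exists to handle: it starts from an \emph{unsaturated} exceptional domain $R_0$, passes to a saturated extension internally, and then proves by hand that the restriction $\Oo\cap K_0$ is non-trivial (using a non-zero $a\in\mm_0$ with $a^n\in\mm^{00}$ and $1\notin\mm^{00}$). So, as written, your final descent from the saturated model back to the original $(K,+,\cdot,A)$ is unjustified, and since everything you do afterwards lives in the saturated model, the argument as stated does not yet prove the proposition.

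The gap is easily repaired, and your own parenthetical ``more carefully applied to the small model'' is the right fix and is what the paper does: do not saturate at the outset. Neither Proposition~\ref{P:A-bdd} nor Lemma~\ref{L:restriction is non-trivial in A} assumes saturation, and the lemma's conclusion is already about the given structure $(K_0,R_0,+,\cdot)$, a reduct of $(K,+,\cdot,A)$, so no transfer in either direction is needed in the ACVF-like case. In the $p$CF-like case, instead of quoting the saturated-model construction inside Proposition~\ref{prop-alignment}, it is cleaner to use (as the paper does) that $K$ carries a definable canonical henselian valuation ring with finite residue field; its non-triviality is not an issue on the small model, since a valuation ring with finite residue field on an infinite field is automatically proper (an infinite field cannot embed into a finite residue field), and definability with its elementary properties passes between elementarily equivalent models. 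With those adjustments your argument coincides with the paper's proof.
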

\begin{proof}
    By Proposition \ref{P:A-bdd}, $R=A:A$ is an infinite definable proper subring of $K$ and $\Frac (A)=K$. If it is a valuation ring, there is nothing to show so we assume it is exceptional.
    
    If $\Frac(R)$ is $p$CF-like, then it admits a definable valuation (the canonical valuation). So we assume $\Frac(R)$ is ACVF-like. The result now follows from Lemma \ref{L:restriction is non-trivial in A}.
\end{proof}
After completing the classification, we will see that ``externally
definable'' can be changed to ``definable''
(Proposition~\ref{P:ext-val-3}).

\subsection{The $p$CF-like case} \label{ss:pcf-like}

Let $R$ be a dp-minimal domain with maximal ideal $\mm$, not necessarily pure, and $K=\Frac (R)$.  In the following, we will frequently use the fact that if $R$ is exceptional then $R\subseteq \Oo_0$ for any externally definable valuation $\Oo_0$ of $K=\Frac (R)$ (Remark \ref{R:r-vs-o-exceptional}).

\begin{proposition}\label{P:construction holds}
    Let $R$ be an exceptional dp-minimal domain such that $K =
    \Frac(R)$ is $p$CF-like.  Let $\Oo_{\can}$ be the canonical
    henselian valuation on $K$ as in Remark~\ref{ocan-00}.
    \begin{enumerate}
    \item $R$ has finite index in $\Oo_{\can}$.
    \item In fact, there is some $n$ such that $\Oo_{\can} \supseteq R
      \supseteq p^n \Oo_{\can}$, and each inclusion has finite index.
    \end{enumerate}

In particular, $R$ arises via the construction in
Proposition~\ref{prop-construction} in a definable manner: $\Oo_{\can}$ is a definable valuation subring of $K$ and $R$ is the preimage of $R/p^n\Oo_{\can}$ under the definable map $\Oo_{\can}\to\Oo_{\can}/p^n\Oo_{\can}$. 
\end{proposition}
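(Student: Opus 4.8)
The plan is to show that $R$ sits between $p^n\Oo_{\can}$ and $\Oo_{\can}$ with both inclusions of finite index, and then read off the ``definable construction'' claim.

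First I would set up the basic containments. By Remark~\ref{R:r-vs-o-exceptional}, since $R$ is exceptional and $\Oo_{\can}$ is a (definable, hence externally definable) valuation ring on $K$, we get $R \subseteq \Oo_{\can}$. Passing to a sufficiently saturated elementary extension (harmless, since finiteness of index is a first-order property once we have the right bound — actually I'd be careful here and work in a saturated model throughout, then note the index bound transfers down), let $\mm_{\can}$ be the maximal ideal of $\Oo_{\can}$ and $\mm$ that of $R$. By Corollary~\ref{cor-two-cases}(2) we are in the $p$CF-like case, so $\mm^{00} = R^{00}$ is the second-largest prime ideal of $R$, equal to $P_a$ for any $a \in \mm$ with $R/aR$ finite; moreover, as recalled in Remark~\ref{ocan-00}, $\Oo_{\can}^{00}$ is the second-largest prime ideal of $\Oo_{\can}$ and $\Oo_{\can}/\Oo_{\can}^{00}$ has characteristic zero, so $\Oo_{\can}^{00}$ contains $p^n\Oo_{\can}$ for some $n$ (indeed $p \notin \Oo_{\can}^{00}$ is false — wait, $p \in \mm_{\can}$ and $\Oo_{\can}/\Oo_{\can}^{00}$ has characteristic $0$ means the image of $p$ is a nonzero non-unit, so $p \notin \Oo_{\can}^{00}$; but the interval $[-v(p),v(p)]$ is finite, so $\mm_{\can}^{00} = \Oo_{\can}^{00} \supseteq \{x : v(x) > n v(p)\} = p^{n+1}\Oo_{\can}$ for suitable $n$).

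Next, the key step: show $R \supseteq p^N\Oo_{\can}$ for some $N$, with finite index. The natural route is via $R^{00}$. We have $R^{00} = \mm^{00}$, and I claim $R^{00}$ is comparable to $\Oo_{\can}^{00}$ as ideals of the valuation ring $\Oo_{\can}$ — both are externally definable (Corollary~\ref{C:R00:R00} for $R^{00}$, and $\Oo_{\can}^{00}$ is the externally-definable prime ideal $\sqrt{\Oo_{\can}^{00}}$'s... actually $\Oo_{\can}^{00}$ is itself prime here so Fact~\ref{F:prime ideals are ext} applies) subgroups of the dp-minimal group $(\Oo_{\can},+)$, hence almost comparable by Fact~\ref{almost-comparable}(3). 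Since neither has a proper type-definable subgroup of small index (being $00$-components, using the saturation and Corollary~\ref{C:Pa-conn}/the fact that these are $00$-connected), one contains the other with finite index, and in fact one simply contains the other. From the characteristic-zero computation, $R^{00} \subseteq R \cap \Oo_{\can}^{00} \subsetneq \mm$ forces $R^{00} \subsetneq \Oo_{\can}^{00}$ is impossible to hold with $R^{00} = \Oo_{\can}^{00}$... let me instead argue: $\mm^{00} = P_a$ is the second-largest prime of $R$, and $\Oo_{\can}^{00}$ is a non-maximal prime of $\Oo_{\can}$, so $R \cap \Oo_{\can}^{00}$ is a non-maximal prime of $R$ containing $\mm^{00} = P_a =$ second-largest; hence $R \cap \Oo_{\can}^{00} = \mm^{00} = R^{00}$. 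As in the proof of Proposition~\ref{prop-alignment}, $\Oo = R^{00}:R^{00} = R_{R^{00}}$ is an externally definable valuation overring of $R$ with residue field $k = \Frac(R/R^{00})$, a finite extension of $\Qq_p$; and $\Oo$ with residue valuation $\Oo_p$ on $k$ recovers (a coarsening of) $\Oo_{\can}$. Because $k$ is finitely ramified over $\Qq_p$ and $\Oo_{\can}$ is its full valuation refining $\Oo$, the residue ring $\Oo_{\can}/R^{00} = \Oo_{\can}/\Oo_{\can}^{00}$ is the valuation ring of $k$, which is finite modulo $p^n$; meanwhile $R/R^{00}$ maps onto an order in $k$ with $\Frac = k$, and $p^n\Oo_{\can}/R^{00} \subseteq$ (the conductor-type ideal) $\subseteq R/R^{00}$ for suitable $n$ since an order in a finite extension of $\Qq_p$ contains $p^n \cdot (\text{maximal order})$. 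Pulling back: $p^n\Oo_{\can} \subseteq R$, and $\Oo_{\can}/p^n\Oo_{\can}$ is finite (as $\Oo_{\can}$ is finitely ramified with finite residue field), so both $[\Oo_{\can}:R]$ and $[R:p^n\Oo_{\can}]$ are finite. This gives (1) and (2).

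Finally, the ``in particular'' clause is immediate once (2) holds: $\Oo_{\can}$ is definable in $K$ by \cite[Theorem~4.7]{johnson21classificationdpmin} (as recalled in Remark~\ref{ocan-00}), the quotient map $\Oo_{\can} \to \Oo_{\can}/p^n\Oo_{\can}$ is definable (the target being a finite definable ring since $p^n\Oo_{\can}$ is definable of finite index), the finite subring $R_0 := R/p^n\Oo_{\can}$ of $\Oo_{\can}/p^n\Oo_{\can}$ is definable (being a finite subset, once we've used $R$ as a predicate — or rather it is definable in $(K,R)$, which is what we want), and $R$ is by construction the preimage of $R_0$. This matches Proposition~\ref{prop-construction} with $\Oo = \Oo_{\can}$, $I = p^n\Oo_{\can}$, $R_0 = R/p^n\Oo_{\can}$. \textbf{The main obstacle} I expect is the step identifying $R \cap \Oo_{\can}^{00}$ with $R^{00}$ and then controlling the ``conductor'' $p^n\Oo_{\can} \subseteq R$: one must pass cleanly between the ideal-theoretic picture inside the valuation ring $\Oo_{\can}$ and the statement about orders in the $p$-adic field $k = \Frac(R/R^{00})$, and make sure the index computations are actually finite (this uses finite ramification of $\Oo_{\can}$ crucially, i.e. Fact~\ref{dpmvr}(3) first bullet) and that everything transfers between the saturated model and $R$ itself.
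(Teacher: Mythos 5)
Your route is genuinely different from the paper's (you try to derive part (2) first, by viewing $R/R^{00}$ as an ``order'' in the $p$-adic field $k=\Frac(R/R^{00})$ and pulling back a conductor), but as written it has two real gaps. First, the pivotal step ``$R/R^{00}$ is an order in $k$ with $\Frac=k$, hence contains $p^n\cdot(\text{maximal order})$'' is not true for an arbitrary subring of $\Oo_k$ with fraction field $k$: for example, if $V$ is a valuation ring of $\Qq_p$ obtained by Chevalley extension from $\Qq[t]_{(t)}$ with $t\in\Zz_p^\times$ transcendental, then $A=V\cap\Zz_p$ is a subring of $\Zz_p$ with $\Frac(A)=\Qq_p$ containing no $p^n\Zz_p$ (if $p^n\Zz_p\subseteq V$ then, since $p^{-n}\in\Qq\subseteq V$, one gets $\Zz_p\subseteq V$, contradicting incomparability). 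The conductor argument only works for subrings that are $\Zz_p$-submodules, or closed/open in the valuation topology; so you would have to invoke the compactness of $R/R^{00}$ in the logic topology and identify that topology with the $p$-adic topology on $k$ (this is what Fact~\ref{F:profinite domain}/Corollary~\ref{C:R/R00 is Qp in dp-min} are set up for) to conclude that $R/R^{00}$ is open in $k$ and hence contains some $p^n\Oo_k$. You never do this, and without it the step fails. Second, you repeatedly use $R^{00}=\Oo_{\can}^{00}$ (e.g.\ ``$\Oo_{\can}/R^{00}=\Oo_{\can}/\Oo_{\can}^{00}$''), but your comparability argument via Fact~\ref{almost-comparable}(3) only yields that one of the two groups contains the other, and your identification $R\cap\Oo_{\can}^{00}=R^{00}$ only gives $R^{00}\subseteq\Oo_{\can}^{00}$; the sentence meant to close this is incoherent. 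Establishing exactly this equality is the entire content of part (1) in the paper, proved by a separate argument: assuming $R^{00}\subsetneq\Oo_{\can}^{00}$, one separates them by a definable closed ball, uses profiniteness of $R/R^{00}$ to find a definable finite-index ideal $I\subseteq\Oo_{\can}^{00}$, and gets a unital homomorphism from the finite ring $R/I$ into the characteristic-zero ring $\Oo_{\can}/\Oo_{\can}^{00}$, a contradiction. (For part (2) the paper then uses only the elementary observation that the index $m$ of $R$ in $\Oo_{\can}$ annihilates $\Oo_{\can}/R$, so $m\Oo_{\can}\subseteq R$, and the prime-to-$p$ part of $m$ is a unit.)

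Two further points. Your parenthetical claim that $\Oo_{\can}^{00}\supseteq\{x:v(x)>nv(p)\}=p^{n+1}\Oo_{\can}$ for suitable $n$ is false: since $\Oo_{\can}/\Oo_{\can}^{00}$ has characteristic zero, no power of $p$ lies in $\Oo_{\can}^{00}$ (the second-largest prime corresponds to the convex hull of $\Zz\cdot v(p)$, which contains every $nv(p)$). This claim appears unused later, but it signals the same confusion about where $\Oo_{\can}^{00}$ sits. Finally, your assertion that the image of $\Oo_{\can}$ in $k$ is the maximal order $\Oo_k$ is true but needs an argument you only gesture at: the induced valuation ring on $k$ is henselian with finite residue field, and since $k$ (a finite extension of $\Qq_p$) is not separably closed its henselian valuation rings are linearly ordered, forcing the image to be $\Oo_k$; finite ramification alone does not give this, because $k$ also carries wild, non-henselian valuation rings. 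With the openness of $R/R^{00}$ and this identification in place, your pullback $p^n\Oo_{\can}\subseteq R$ would indeed give both (1) and (2), so the plan is salvageable, but the decisive steps are exactly the ones currently missing.
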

\begin{proof}
  Replacing $R$ with an elementary extension, we may assume $R$ is
  sufficiently saturated for both points.
  
   \textit{1.} By Remark~\ref{R:r-vs-o-exceptional}, $R \subseteq \Oo_{\can}$.  As both rings are definable, it
    suffices to show $\Oo_{\can}/R$ is finite, or equivalently, small.
    Equivalently, we must show that that $R^{00} = \Oo_{\can}^{00}$.
    Suppose not.  Then $R^{00} \subsetneq \Oo_{\can}^{00} \subseteq
    \mm_{\can}$, where $\mm_{\can}$ is the maximal ideal of $\Oo_{\can}$.

    Let $\pp = R^{00} = \mm^{00}$.  By Corollary~\ref{cor-two-cases},
    $\pp$ is the second-largest prime ideal of $R$.  By
    \cite[Proposition 3.8, Theorem 3.9]{dp-min-domains}, the
    localization $R_{\pp}$ is a valuation ring with maximal ideal
    $\pp$.  The fact that $\pp \subseteq \mm_{\can}$ implies the
    reverse inclusion on valuation rings: $\Oo_{\can} \subseteq
    R_{\pp}$.  Then $\pp$ is closed under multiplication by elements
    of $\Oo_{\can}$, so $\pp$ is an ideal in $\Oo_{\can}$.
    
    Let $v$ be the valuation on $K$ associated to $\Oo_{\can}$.  Both
    $\pp$ and $\Oo_{\can}^{00}$ are ideals in $\Oo_{\can}$, so they are
    defined by cuts in the value group.  The fact that $\pp \subsetneq
    \Oo_{\can}^{00}$ implies that there is some closed ball $B_{\ge\gamma}(0)$ (with respect to the valuation induced by $\Oo_{\can}$) 
    separating the two:
    \begin{equation*}
      \pp \subseteq B_{\ge\gamma}(0) \subseteq \Oo_{\can}^{00}.
    \end{equation*}
    By Fact~\ref{profinite-r/r00}, the compact Hausdorff ring
    $R/R^{00}$ is profinite, which means that $R^{00}$ is a filtered
    intersection of definable ideals $I \lhd R$ with finite index.
    The ball $B_{\ge \gamma}(0)$ is definable, so by saturation there is
    some finite-index definable ideal $I$ such that $I \subseteq B_{\ge
      \gamma}(0) \subseteq \Oo_{\can}^{00}$.  Then there is a ring
    homomorphism
    \begin{equation*}
      R/I \to \Oo/\Oo_{\can}^{00}.
    \end{equation*}
    However, $\Oo/\Oo_{\can}^{00}$ has characteristic 0
    (Remark~\ref{ocan-00}) and $R/I$ is finite, so this is absurd.

  \textit{2.} If $m$ is the index of $R$ in $\Oo_{\can}$, then $m$
    annihilates the group $(\Oo_{\can}/R,+)$, meaning that $R
    \supseteq m\Oo_{\can}$.  Write $m$ as $m_0 p^n$ where $m_0$ is
    prime to $p$.  Then $m_0$ is invertible in $\Oo_{\can}$, so
    \begin{equation*}
      \Oo_{\can} \supseteq R \supseteq m\Oo_{\can} = p^n\Oo_{\can}.
    \end{equation*}
    Finally, $\Oo_{\can}/p^n\Oo_{\can}$ has finite index because
    $\Oo_{\can}$ has finite residue field and $v(p)$ is a finite
    multiple of the minimum positive element in the valuation group
    (Fact~\ref{dpmvr}). \qedhere
\end{proof}

\subsection{Rogue domains} \label{ss:rogue}

Let $R$ be a dp-minimal domain with maximal ideal $\mm$, not necessarily pure, and $K=\Frac (R)$. Whenever $R$ is sufficiently saturated we let $\Oo$ be the externally definable valuation ring $R^{00}:R^{00}$,   $\mm_0$ its maximal ideal, $v$ be the corresponding valuation, $\Gamma$ its value group and $\mathbf{k}_0$ its residue field.

\begin{definition} \label{def-rogue}
  A \emph{rogue domain} is an exceptional dp-minimal domain $(R,+,\cdot,\dots)$, possibly with extra structure, such
  that $\Frac(R)$ is ACVF-like and $R^{00}$ is \emph{not} definable.
\end{definition}
We will
eventually show that rogue domains do not exist.
\begin{remark} \label{expand-rogue}
Remark~\ref{g00-facts} has the following consequences.
\begin{enumerate}
    \item If $R$ is a rogue domain and $R \equiv S$, then $S$ is a rogue domain.
In particular, elementary extensions of rogue domains are rogue.
\item Any
dp-minimal expansion of a rogue domain is a rogue domain.  In
particular, Shelah expansions of rogue domains are rogue.
\item If $R$ is a sufficiently saturated rogue domain, then $R/R^{00}$ is infinite.
\end{enumerate}
\end{remark}
\begin{lemma} \label{strictly-finer}
  Let $R$ be a sufficiently saturated rogue domain with fraction field $K$,
  and let $\Oo = R^{00} : R^{00}$.  Then $\Oo \subsetneq \Oo'$ for any
  definable valuation ring $\Oo'$ on $K$.
\end{lemma}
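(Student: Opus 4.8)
The plan is to show that $\Oo$ is strictly contained in every definable valuation ring $\Oo'$ on $K$, by combining two observations: first, that $\Oo \subseteq \Oo'$, and second, that equality is impossible because $\Oo$ is not definable. The inclusion $\Oo \subseteq \Oo'$ should follow from the dp-minimality of the joint structure $(K, R, \Oo', \dots)$: since $\Oo = R^{00}:R^{00}$ is externally definable (Lemma \ref{lem-O-vee-def}), we may pass to a Shelah-type expansion in which $\Oo$ becomes definable, and then apply Fact \ref{F:dp-min R comparable to O} to $\Oo$ and $\Oo'$ to conclude that one contains the other. To rule out $\Oo' \subseteq \Oo$, note that $R \subseteq \Oo'$ by Remark \ref{R:r-vs-o-exceptional} (as $R$ is exceptional and $\Oo'$ is externally definable), so $\Oo' \subseteq \Oo$ would give $R \subseteq \Oo' \subseteq \Oo$; but one then wants to derive that $\Oo'$ (or $\Oo$) is trivial or that some contradiction with $R$ being exceptional arises. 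Actually the cleaner route: since $R \subseteq \Oo'$ and $\Oo = R^{00}:R^{00}$, and $R^{00}$ is an ideal of $\Oo$, one checks directly that $\Oo' \subseteq \Oo$ would force $\Oo'$ to also fix $R^{00}$ (its maximal ideal, being an ideal of $\Oo'$, would be comparable to $R^{00}$), hence $\Oo \subseteq \Oo'$ as well, giving $\Oo = \Oo'$; so in all cases $\Oo \subseteq \Oo'$.

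So the crux reduces to showing the inclusion is \emph{strict}. Here I would argue by contradiction: suppose $\Oo = \Oo'$ for some definable valuation ring $\Oo'$. Then $\Oo$ is definable, hence $R^{00}$ — the maximal ideal of $\Oo$ (or more precisely a definable ideal of the definable valuation ring $\Oo$, cut out by $\{x : xR^{00} \subseteq R^{00}\}$-type data) — would also be definable. More carefully, if $\Oo$ is definable then every ideal of $\Oo$ is definable, because ideals of a (definable) valuation ring are given by cuts in the value group and a valuation ring's value group with its definable ordering has the property that... hmm, this needs the value group to be such that $R^{00}$'s cut is definable. Instead: $R^{00}$ is an ideal of $\Oo$; if $\Oo$ is definable, then since $R^{00}$ has finite index in $R$ would not immediately follow — but in a rogue domain $R/R^{00}$ is \emph{infinite} by Remark \ref{expand-rogue}(3). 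The right move is: $R^{00}$ being definable contradicts the definition of a rogue domain, where $R^{00}$ is \emph{not} definable. And $R^{00}$ would be definable: it is $\bigcap\{I \lhd R : I \text{ definable of finite index}\}$, and if $\Oo$ were definable then one could use that $R^{00}$ is the largest ideal of $\Oo$ contained in $R$ with... Let me just say: $R^{00}$ is the unique type-definable subgroup of $R$ of bounded index; if $\Oo$ is definable, then $R^{00}$, being an ideal of $\Oo$ with $\Oo/R^{00}$ (equivalently $R/R^{00}$) infinite but $R^{00}$ of bounded index in $R$, is cut out by a definable condition on the value group of $\Oo$ — and by dp-minimality of the value group plus Fact \ref{dp-minimal-oag-classification}-style reasoning, a bounded-index-but-infinite-quotient convex subgroup situation can be made definable, contradicting non-definability of $R^{00}$.

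I expect the main obstacle to be pinning down exactly \emph{why} definability of $\Oo$ forces definability of $R^{00}$ — this is the heart of the matter, and the excerpt's earlier results (Lemma \ref{lem-O-vee-def}, Proposition \ref{P:vee valuation ring is ext def}, Remark \ref{g00-facts}) are about the converse external-definability direction, so I would need to invoke the structure theory of dp-minimal valuation rings: if $\Oo$ is definable then $\mathbf{k}_0$ and $\Gamma$ are dp-minimal, $R^{00}$ corresponds to a cut $\gamma$ in $\Gamma'$, and I must show this cut is in fact definable over $K$ (not just externally definable), using that $R/R^{00}$ has bounded index so the cut is "just above" a definable point or realized in $\Gamma$ itself. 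Modulo that, the rest is bookkeeping with Fact \ref{F:dp-min R comparable to O}, Remark \ref{R:r-vs-o-exceptional}, and Remark \ref{expand-rogue}.
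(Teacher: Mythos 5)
There is a genuine gap, in fact two. First, your reduction of the case $\Oo' \subsetneq \Oo$ to the case $\Oo' = \Oo$ is a non sequitur: observing that $\Oo'$ ``fixes'' $R^{00}$ (i.e.\ $\Oo' R^{00} \subseteq R^{00}$) only gives $\Oo' \subseteq R^{00}:R^{00} = \Oo$, which is the inclusion you already assumed, not the reverse inclusion $\Oo \subseteq \Oo'$. So a definable valuation ring strictly finer than $\Oo$ is not excluded by your argument, and excluding it is part of the content of the lemma. Second, the step you yourself flag as the heart of the matter --- that definability of $\Oo$ forces definability of $R^{00}$ --- is exactly the hard point, and the mechanism you sketch does not work: $R^{00}$ is an ideal of $\Oo$ given by a cut $\Xi$ in the value group, and boundedness of $R/R^{00}$ gives no reason for that cut to be realized in $\Gamma$ or to be of the form $\gamma^+$; indeed, in the rogue scenario the cut is genuinely ``irrational'' (this is what Lemma~\ref{structure-lemma} later establishes, and its proof uses the present lemma, so one cannot argue in that direction). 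No value-group generality will do; the contradiction has to use the ring $R$ itself.

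The paper's proof treats both subcases $\Oo' \subsetneq \Oo$ and $\Oo' = \Oo$ uniformly by a device absent from your proposal. Assuming $\Oo' \subseteq \Oo$ with $\Oo'$ definable: since $K$ is ACVF-like, the (henselian) definable ring $\Oo'$ has infinite residue field, hence by saturation a large one, and therefore the residue field $\mathbf{k}_0$ of the coarsening $\Oo$ is much larger than $|R/R^{00}|$. Fixing coset representatives $S$ of $R^{00}$ in $R$, the set $B = \{\res(x/y) : x,y \in S\setminus\{0\},\ v(x)=v(y)\}$ is small, so one can choose $c \in \Oo^\times$ with $\res(c) \notin B$. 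An $\rv$-calculation then shows $cR \cap R = R^{00}$: any $a_1 \in cR\cap R \setminus R^{00}$ and $b_1 = a_1/c \in R\setminus R^{00}$ have $\rv$-classes equal to those of their representatives $a_2, b_2 \in S$, forcing $\res(c) = \res(a_2/b_2) \in B$, a contradiction. Thus $R^{00}$ is definable, contradicting rogueness. This genericity-of-residue argument, which produces a \emph{definable} presentation of $R^{00}$ as $cR\cap R$, is the missing idea; without it neither of your two remaining cases goes through.
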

\begin{proof}
  Otherwise, $\Oo \supseteq \Oo'$, because any two externally
  definable valuation rings are comparable (essentially by Fact~\ref{F:dp-min R comparable to O}).  The definable valuation
  ring $\Oo'$ is henselian.  If its residue field is finite, then $K$
  is $p$CF-like, contradicting the fact that $K$ is ACVF-like.  Thus
  $\Oo'$ has infinite residue field.  By saturation, $\Oo'$ has large
  residue field, as does the coarsening $\Oo$.  In particular, the
  residue field $\mathbf{k}_0$ of $\Oo$ is much larger than the size
  of $R/R^{00}$.

  Let $S\subseteq R$ be a set of coset representatives of $R^{00}$ in $R$, chosen that $0\in S$. Let $v$ be the valuation associated with $\Oo$ and let $\res$ be the residue map.  Let $\rv : K^\times \to K^\times/(1+\mm_0)$ be the quotient map, where $\mm_0$ is the maximal ideal of $\Oo$.

     Let
  \begin{equation*}
    B = \{\res(x/y) : x,y \in S \setminus \{0\} \text{ and } v(x)
    = v(y)\} \subseteq \mathbf{k}_0.
  \end{equation*}
  
  Because $|R/R^{00}|$ is much smaller than $|\mathbf{k}_0|$, we have $B\subsetneq \mathbf{k}_0$.  Take $c \in
  \Oo$ such that $\res(c) \in \mathbf{k}_0^\times \setminus B$.  Note that $c
  \in \Oo^\times$ since $\res(c) \ne 0$.  The fact that $c \in \Oo^\times$ implies that
  $cR^{00} = R^{00}$, by definition of $\Oo = R^{00} : R^{00}$.  Then
  
    \[R^{00} = c(R^{00}) = (cR)^{00} \subseteq cR, \]
    so $R^{00} \subseteq cR \cap R.$
    We will arrive to a contradiction by showing that $R^{00} = cR \cap R$, and hence $R^{00}$ is definable.
    
    Otherwise, take $a_1  \in cR \cap R \setminus R^{00}$.  Then $b_1 := a_1/c \in R$.  If
  $b_1 \in R^{00}$, then $a_1 = cb_1 \in cR^{00} = R^{00}$, a
  contradiction.  So $b_1 \notin R^{00}$.

  By choice of $S$, there are some $a_2, b_2 \in S$ such that
  \begin{gather*}
    a_2 \equiv a_1 \pmod{R^{00}} \\
    b_2 \equiv b_1 \pmod{R^{00}}.
  \end{gather*}
  If $a_2 = 0$, then $a_1 \in R^{00}$, a contradiction.  So $a_2 \in S
  \setminus \{0\}$.  Similarly, $b_2 \in S \setminus \{0\}$ because
  $b_1 \notin R^{00}$.

  Note that $a_1 - a_2 \in R^{00}$ but $a_1 \notin R^{00}$.  As
  $R^{00}$ is an ideal in $\Oo$ (so given by a cut), we have    $v(a_1 - a_2) > v(a_1)$ and so $\rv(a_1)=\rv(a_2)$. Similarly, $v(b_1-b_2)>v(b_1)$ and so also $\rv(b_1)=\rv(b_2)$. Consequently, since $c\in \Oo^\times$,
  \[ \rv(c) = \rv(a_1/b_1) = \rv(a_1)/\rv(b_1) = \rv(a_2)/\rv(b_2) = \rv(a_2/b_2).\]
  In particular, $v(a_2/b_2) = v(c) = 0$, so $v(a_2) = v(b_2)$.  By
  definition of $B$,
  \begin{equation*}
    \res(c) = \rv(c) = \rv(a_2/b_2) = \res(a_2/b_2) \in B
  \end{equation*}
  contradicting the choice of $c$. 
\end{proof}

\begin{corollary} \label{weird-trick}
  Let $R$ be a rogue domain with fraction field $K$ and let $\Oo'$ be
  a definable valuation ring on $K$.  If $a \in K$ and $aR/(aR \cap
  R)$ is finite, then $a \in \Oo'$.
\end{corollary}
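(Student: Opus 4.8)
The plan is to reduce to the sufficiently saturated case and then combine Lemma~\ref{lem-O-vee-def} with Lemma~\ref{strictly-finer}. First I would pass to a sufficiently saturated elementary extension $\hat R \succ R$, with fraction field $\hat K = \Frac(\hat R) \succeq K$; by Remark~\ref{expand-rogue}, $\hat R$ is again a rogue domain. The valuation ring $\Oo'$ is defined in $R$ by some formula $\phi(x)$ over $K$; the same formula defines a subring $\hat\Oo'$ of $\hat K$, which is a valuation ring by elementarity, and which satisfies $\hat\Oo' \cap K = \Oo'$.

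Next I would transfer the hypothesis to $\hat R$. For fixed $a$, the groups $aR$ and $aR \cap R$ are definable with parameter $a$, so for each $n$ the condition ``$[aR : aR \cap R] \le n$'' is expressed by a first-order formula; since $[aR : aR \cap R]$ equals some specific natural number in $R$, the same formula holds in $\hat R$, and hence $a\hat R/(a\hat R \cap \hat R)$ is finite. Applying Lemma~\ref{lem-O-vee-def} to the sufficiently saturated domain $\hat R$ and the element $a \in \hat K$, we obtain $a \in \Oo := \hat R^{00} : \hat R^{00}$.

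Finally, Lemma~\ref{strictly-finer}, applied to $\hat R$ and to the definable valuation ring $\hat\Oo'$ on $\hat K$, gives $\Oo \subsetneq \hat\Oo'$; in particular $a \in \hat\Oo'$. Since $a \in K$, we conclude $a \in \hat\Oo' \cap K = \Oo'$, as required.

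I do not expect a genuine obstacle here: all the substance lies in Lemmas~\ref{lem-O-vee-def} and~\ref{strictly-finer}, and the only points requiring care are the routine transfer of ``$[aR : aR \cap R]$ is finite'' along the elementary extension (a first-order condition on $a$, since it is witnessed by a concrete bound on the index), and the harmless passage to $\hat R$ and then back down to $K$.
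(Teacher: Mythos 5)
Your proposal is correct and follows essentially the same route as the paper's proof: pass to a sufficiently saturated elementary extension, apply Lemma~\ref{lem-O-vee-def} to get $a \in R^{00}:R^{00}$, and conclude with Lemma~\ref{strictly-finer}. The paper's version is simply terser, leaving implicit the routine transfer of the finite-index hypothesis and the descent from $\hat\Oo'$ back to $\Oo'$, which you spell out correctly.
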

\begin{proof}
  We may replace $R$ with a sufficiently saturated elementary extension. By Lemma \ref{lem-O-vee-def}, $a\in \Oo=R^{00}:R^{00}$ and so we conclude by Lemma \ref{strictly-finer}.
\end{proof}

\begin{lemma}\label{lem-O-finest}
  Let $R$ be a sufficiently saturated rogue domain with fraction field $K$,
  and let $\Oo = R^{00} : R^{00}$.  Then $\Oo$ is the finest externally
  definable valuation ring on $K$.
\end{lemma}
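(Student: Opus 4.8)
The plan is to combine Lemma~\ref{strictly-finer} with Proposition~\ref{P:vee valuation ring is ext def} and the fact that externally definable valuation rings on $K$ are linearly ordered (a consequence of Fact~\ref{F:dp-min R comparable to O}, applied after passing to the Shelah expansion). Let $\Oo'$ be any externally definable valuation ring on $K$; we want $\Oo \subseteq \Oo'$. Since $\Oo$ and $\Oo'$ are both externally definable valuation subrings of $K$, they are comparable, so the only danger is that $\Oo' \subsetneq \Oo$. I would rule this out by contradiction.

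The key step is to transfer the conclusion of Lemma~\ref{strictly-finer}, which speaks about \emph{definable} valuation rings $\Oo'$, to the merely \emph{externally definable} $\Oo'$. The natural move is to pass to the Shelah expansion $R^{Sh}$. By Remark~\ref{expand-rogue}(2), $R^{Sh}$ is again a (sufficiently saturated) rogue domain, and its fraction field is still ACVF-like. The externally definable valuation ring $\Oo'$ on $K$ becomes definable in $R^{Sh}$. Moreover, $R^{00}$ is unchanged by passing to the Shelah expansion (external definability does not move $R^{00}$; this is implicit in Remark~\ref{expand-rogue} together with Fact~\ref{profinite-r/r00}), so $\Oo = R^{00} : R^{00}$ computed in $R^{Sh}$ is the same valuation ring. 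Now Lemma~\ref{strictly-finer}, applied to the rogue domain $R^{Sh}$ and the definable valuation ring $\Oo'$, gives $\Oo \subsetneq \Oo'$. This contradicts $\Oo' \subsetneq \Oo$, and in fact shows $\Oo \subseteq \Oo'$, as desired.

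The main obstacle I anticipate is bookkeeping around the Shelah expansion: one must check that the relevant objects ($R^{00}$, hence $\Oo = R^{00}:R^{00}$, and the ACVF-like property of $\Frac(R)$) really are preserved, and that ``sufficiently saturated'' survives — here one should be a little careful, since $R^{Sh}$ need not be saturated even if $R$ is, so the cleanest route is to first fix a large saturation cardinal, take $R$ of that saturation, form $R^{Sh}$, and then pass to a further saturated elementary extension of $R^{Sh}$ (which by Remark~\ref{expand-rogue}(1) is still rogue) before invoking Lemma~\ref{strictly-finer}; passing to this extension does not change which valuation rings of the original $K$ are traces of definable sets. Once these points are arranged, the argument is short: comparability of externally definable valuation rings plus the strict-refinement statement of Lemma~\ref{strictly-finer} forces $\Oo$ to be below every externally definable valuation ring, i.e.\ $\Oo$ is the finest one.
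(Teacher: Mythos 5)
Your overall strategy is the paper's: make the externally definable $\Oo'$ definable by passing to the Shelah expansion, restore saturation by a further elementary extension, and then invoke Lemma~\ref{strictly-finer}. The problem is the transfer step you gloss over. You assert that ``$R^{00}$ is unchanged by passing to the Shelah expansion'' and attribute this to Remark~\ref{expand-rogue} together with Fact~\ref{profinite-r/r00}, but neither gives it: Remark~\ref{expand-rogue} only says rogueness is preserved, and Fact~\ref{profinite-r/r00} only says $R^{00}=R^0$. What you actually need is that, after expanding the language by all externally definable sets \emph{and} then passing to a sufficiently saturated elementary extension $S$ of $R^{Sh}$, the valuation ring $S^{00}:S^{00}$ traces back to the original $\Oo$ on $K$ (equivalently, no new bounded-index type-definable subgroups appear that cut $\Oo$ down). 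This is a genuinely delicate point: preservation of $G^{00}$ under Shelah expansions in NIP theories is a nontrivial theorem (Chernikov--Pillay--Simon), not something this paper proves or cites, and your closing remark that the extension ``does not change which valuation rings of the original $K$ are traces of definable sets'' only takes care of $\Oo'$, not of $\Oo$.

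The paper avoids this issue entirely, and this is the one real idea your proposal is missing: by Lemma~\ref{lem-O-vee-def}, membership $a \in \Oo$ is equivalent to finiteness of $aR/(aR \cap R)$, a condition on the underlying ring that is insensitive to expanding the language and, for each fixed bound $n$ on the index, is first-order, hence transfers to any elementary extension. This is exactly what Corollary~\ref{weird-trick} packages: for a (not necessarily saturated) rogue domain and a definable $\Oo'$, any $a$ with $aR/(aR\cap R)$ finite lies in $\Oo'$, proved by passing to a saturated extension and applying Lemma~\ref{strictly-finer} there. The paper's proof of Lemma~\ref{lem-O-finest} then just applies this to $R^{Sh}$. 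If you replace your unsupported claim about $R^{00}$ by this elementwise characterization (which you never invoke), your argument becomes correct and coincides with the paper's; as written, the central step is unjustified.
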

\begin{proof}
  The ring $\Oo$ is externally definable by Corollary~\ref{C:R00:R00}.
  Fix an externally definable valuation ring $\Oo' \subseteq K$; it
  suffices to show that $\Oo \subseteq \Oo'$.  By
  Lemma~\ref{lem-O-vee-def}, we must show that if $a \in K$ and
  $aR/(aR \cap R)$ is finite, then $a \in \Oo'$.  This holds by
  Corollary~\ref{weird-trick} applied to the Shelah expansion $R^{Sh}$ of
  $R$, in which $\Oo'$ becomes a definable valuation ring (but
  saturation is lost).  The Shelah expansion is still a rogue domain
  (Remark~\ref{expand-rogue}).
\end{proof}

\begin{corollary}\label{C: no def additive subgroup}
  Let $R$ be a sufficiently saturated rogue domain with fraction field $K = \Frac(R)$, let $\Oo$ be the externally definable valuation ring $\Oo = R^{00} : R^{00} \subseteq K$, and let $\mathbf{k}_0$ be the residue field of $\Oo$.
  \begin{enumerate}
  \item There are no externally definable non-trivial valuation rings
    on $\mathbf{k}_0$.
  \item If $A \subseteq \mathbf{k}_0$ is an externally definable additive subgroup, then $A$ is finite or $A = \mathbf{k}_0$.
  \end{enumerate}
\end{corollary}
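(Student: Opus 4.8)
The plan is to derive both statements from Lemma~\ref{lem-O-finest}, which says that $\Oo = R^{00}:R^{00}$ is the \emph{finest} externally definable valuation ring on $K$. Throughout, I read ``externally definable on $\mathbf{k}_0$'' as externally definable in the structure induced on $\mathbf{k}_0 = \Oo/\mm_0$ by $R$. Since $\Oo$ is externally definable (Lemma~\ref{lem-O-vee-def}) and $K = \Frac(R)$ is NIP, its ideal $\mm_0$ is externally definable by Proposition~\ref{P:vee valuation ring is ext def}; hence $\mathbf{k}_0$ and the structure induced on it are interpretable in $R^{Sh}$, and I will freely pass between external definability of subsets of $\mathbf{k}_0$ as computed in $R$, in $R^{Sh}$, in the full induced structure, or in a reduct of it.

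For part~(1), I would suppose toward a contradiction that $\Oo''$ is a non-trivial externally definable valuation ring on $\mathbf{k}_0$, let $\pi\colon \Oo \to \mathbf{k}_0$ be the residue map, and set $\Oo^* = \{x \in \Oo : \res(x) \in \Oo''\} = \pi^{-1}(\Oo'')$. Then $\Oo^*$ is a valuation subring of $K$: it contains $\mm_0$ (since $\res(\mm_0) = \{0\} \subseteq \Oo''$), and for $x \in \Oo^\times$ we have $\res(x) \in \mathbf{k}_0^\times$, so either $\res(x) \in \Oo''$ and $x \in \Oo^*$, or $\res(x)^{-1} = \res(x^{-1}) \in \Oo''$ and $x^{-1} \in \Oo^*$. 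Lifting some $y \in \mathbf{k}_0 \setminus \Oo''$ to $\Oo$ shows $\Oo^* \subsetneq \Oo$, and $\Oo^*$ is externally definable because $\Oo$, $\mm_0$, and $\Oo''$ all are. Thus $\Oo^*$ is an externally definable valuation ring on $K$ strictly contained in $\Oo$, contradicting Lemma~\ref{lem-O-finest}.

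For part~(2), I would suppose toward a contradiction that $A \subseteq \mathbf{k}_0$ is externally definable, additive, infinite, and $A \ne \mathbf{k}_0$; in particular $\mathbf{k}_0$ is infinite. The structure induced on $\mathbf{k}_0$ is interpretable in the dp-minimal structure $R^{Sh}$ (Fact~\ref{external-dp}), so its reduct $(\mathbf{k}_0, +, \cdot, A)$ has dp-rank at most $1$, and at least $1$ since $\mathbf{k}_0$ is infinite; hence $(\mathbf{k}_0, +, \cdot, A)$ is dp-minimal. Applying Proposition~\ref{P:ext-val-2} to it yields a non-trivial valuation ring on $\mathbf{k}_0$ externally definable in $(\mathbf{k}_0, +, \cdot, A)$, hence, external definability passing to expansions, externally definable on $\mathbf{k}_0$ in the sense of part~(1); this contradicts part~(1).

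The one point requiring care is the bookkeeping in the first paragraph: external definability of subsets of $\mathbf{k}_0$ should not depend on whether one works inside $R$, $R^{Sh}$, the full structure induced on $\mathbf{k}_0$, or the reduct $(\mathbf{k}_0,+,\cdot,A)$. Granting this routine fact, there is no substantial obstacle---both parts fall immediately out of Lemma~\ref{lem-O-finest} and Proposition~\ref{P:ext-val-2}.
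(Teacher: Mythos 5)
Your proposal is correct and follows essentially the same route as the paper: part (1) by composing the hypothetical valuation on $\mathbf{k}_0$ with $\Oo$ to get an externally definable valuation ring strictly finer than $\Oo$, contradicting Lemma~\ref{lem-O-finest}, and part (2) by applying Proposition~\ref{P:ext-val-2} (which needs no saturation) to the dp-minimal structure $(\mathbf{k}_0,+,\cdot,A)$ and then invoking part (1). The only difference is that you spell out the pullback construction and the external-definability bookkeeping that the paper leaves implicit, and these details are handled correctly.
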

\begin{proof}
  The first point is a direct consequence of Lemma~\ref{lem-O-finest}: if there were a non-trivial externally definable valuation on $\mathbf{k}_0$ then we could compose it with $\Oo$, getting a new externally definable valuation $\Oo' \subsetneq \Oo$, contradicting the lemma.  The second statement is now a direct consequence of Proposition \ref{P:ext-val-2}, noting that saturation was not assumed there.
\end{proof}

To a first approximation, the following lemma says that rogue domains
cannot arise from Proposition~\ref{prop-construction}.
\begin{lemma} \label{constructed-aren't-rogue}
  Let $R$ be a rogue domain.  Let $\Oo'$ be an externally definable
  valuation ring on $K = \Frac(R)$ and let $I$ be an ideal in $\Oo'$.
  If $I \subseteq R$, then the group $R/I$ is infinite.
\end{lemma}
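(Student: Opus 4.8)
The idea is to argue by contradiction: suppose $R$ is a rogue domain, $\Oo'$ is an externally definable valuation ring on $K = \Frac(R)$, $I \lhd \Oo'$ is an ideal with $I \subseteq R$, and $R/I$ is finite.  We want to manufacture either a definable $R^{00}$ or an externally definable valuation ring strictly below $\Oo = R^{00} : R^{00}$, contradicting Lemma~\ref{lem-O-finest} (or Remark~\ref{expand-rogue}).  By passing to a sufficiently saturated elementary extension we may assume $R$ is saturated, and by passing to the Shelah expansion (still rogue by Remark~\ref{expand-rogue}) we may assume $\Oo'$ is definable; note $R/I$ remains finite, so $I$ has finite index in $R$ as well.

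First I would locate $\Oo$ relative to $\Oo'$.  Since $R$ is exceptional we have $R \subseteq \Oo'$ (Remark~\ref{R:r-vs-o-exceptional}), and by Lemma~\ref{strictly-finer} we have $\Oo \subsetneq \Oo'$, so $\Oo'$ is a proper coarsening of $\Oo$ and the maximal ideal $\mm'$ of $\Oo'$ is strictly contained in the maximal ideal $\mm_0$ of $\Oo$.  Now $I$ is an ideal of $\Oo'$ contained in $R \subseteq \Oo$, hence $I$ is also an ideal of $\Oo$ (it is an $\Oo'$-submodule of $\Oo$, and $\Oo' \supseteq \Oo$), so $I$ is given by a cut in the value group $\Gamma$ of $\Oo$.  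The key point is to compare $I$ with $R^{00}$.  If $I \subseteq R^{00}$, then $R^{00}/I$ is a subgroup of the finite group $R/I$, hence finite, so $R^{00}$ is a finite union of cosets of $I$; since $I$ is definable (it's an ideal of the definable valuation ring $\Oo'$) this would make $R^{00}$ definable, contradicting rogueness.  So we must have $I \not\subseteq R^{00}$, i.e.\ there is $a \in I \setminus R^{00}$.

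The remaining case is the main obstacle, and here I would exploit that $I$ is an ideal of $\Oo'$ while $R^{00}$ is only an ideal of the finer ring $\Oo$.  Pick $a \in I \setminus R^{00}$.  Then $a\Oo' \subseteq I \subseteq R$, and also $a\Oo' \supseteq a\mm_0 \supsetneq aR^{00}$ in fact $a\Oo'$ properly contains $a\Oo$ since $\Oo \subsetneq \Oo'$.  I claim the additive group $a\Oo' / R^{00}$ is infinite: indeed $a\Oo'$ contains $a\mm'$ where $\mm'$ is the maximal ideal of $\Oo'$, and modulo $R^{00}$ (a proper ideal of $\Oo$) the image of $a\mm'$ is infinite because the value-group cut defining $\mm'$ lies strictly below that defining $R^{00}$ rescaled by $v(a)$ — concretely, between $v(a) + \mm'$-level and $v(a) + R^{00}$-level there are infinitely many value-group elements by saturation, and $\Oo$ separates them.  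But $a\Oo' \subseteq R$, so $a\Oo'/R^{00}$ embeds into $R/R^{00}$; since $R/R^{00}$ is profinite (Fact~\ref{profinite-r/r00}) and $a\Oo'/R^{00}$ is an infinite closed subgroup, that is fine so far — the contradiction instead comes from $I$: we have $R^{00} \subseteq I$ forced?  No.  Let me instead combine both observations cleanly: $I$ is an ideal of $\Oo'$, so $I : I \supseteq \Oo'$, whereas if $R/I$ were finite then $I^{00} = R^{00}$ (as $I$ has finite index in $R$, so $I^{00} = R^{00}$ by $(I)^{00} = R^{00}$ for finite-index definable subgroups), hence $R^{00} = I^{00}$ is an ideal of $I : I \supseteq \Oo'$, so $R^{00}$ is an ideal of $\Oo'$.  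But $\Oo = R^{00} : R^{00} \supseteq \Oo'$, contradicting $\Oo \subsetneq \Oo'$ from Lemma~\ref{strictly-finer}.  This last line is the crux, and I expect the delicate point to be justifying $I^{00} = R^{00}$ and that $R^{00}$ being an $\Oo'$-ideal forces $\Oo' \subseteq \Oo$; both follow from finite index ($I^{00} = R^{00}$ for finite-index definable $I \leq R$ since $(R/I)^{00}$ is trivial) and the definition of $\Oo$ as the stabilizer of $R^{00}$.
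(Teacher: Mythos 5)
Your final argument is correct, but it takes a genuinely different route from the paper's. The paper argues: since $\Frac(R)$ is ACVF-like, the valuation ring $\Oo'$ (made definable via the Shelah expansion) must have \emph{infinite} residue field, so by Fact~\ref{F:joh19}(2) every definable ideal of $\Oo'$ is 00-connected; thus $I = I^{00}$, and if $R/I$ were finite then $R^{00} = I^{00} = I$ would be definable, contradicting rogueness directly. You instead use only the finite index of $I$ in $R$ to get $R^{00} = I^{00}$, note that $I^{00}$ is still an $\Oo'$-ideal, conclude $\Oo' \subseteq I^{00}:I^{00} = R^{00}:R^{00} = \Oo$, and contradict Lemma~\ref{strictly-finer}. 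Both proofs consume ACVF-likeness and rogueness, but the paper channels them through 00-connectedness of ideals, while you channel them through the ``$\Oo$ is strictly finer than every definable valuation ring'' lemma; your version avoids invoking the residue-field dichotomy at this point, at the cost of the stabilizer computation. That computation is the one step you assert without proof: that $a I^{00} \subseteq I^{00}$ for every $a$ with $aI \subseteq I$ is exactly the content (and proof) of Fact~\ref{F:joh19}(1) applied to the saturated NIP domain $\Oo'$, via the injective map $I/(I \cap a^{-1}I^{00}) \to I/I^{00}$; cite it or reproduce it.

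Two further points need tightening. First, the order of reductions: $\Oo'$ and $I$ are externally definable subsets of the original model, so you should pass to the Shelah expansion \emph{first}, making both $\Oo'$ and $I$ definable (note that an ideal of a definable valuation ring is in general only \emph{externally} definable, given by a cut in the value group, so your parenthetical ``$I$ is definable (it's an ideal of the definable valuation ring $\Oo'$)'' is wrong as stated and needs $I$ to be absorbed into the expansion as well), and only then pass to a sufficiently saturated elementary extension, where ``$R/I$ is finite'' is preserved because it is now first-order; doing the extension first and the Shelah expansion second, as written, leaves you in a non-saturated structure where $R^{00}$, $\Oo$, and Lemma~\ref{strictly-finer} are not directly available. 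Second, the middle paragraph (the attempted analysis of $a\Oo'/R^{00}$) is abandoned midstream and should be deleted, and the preliminary case split $I \subseteq R^{00}$ versus $I \not\subseteq R^{00}$ becomes unnecessary once the final argument is in place.
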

\begin{proof}
  Note that $I$ is externally definable.  Passing to the Shelah
  expansion, we may assume that $\Oo'$ and $I$ are definable.  Passing
  to an elementary extension, we may assume the structure is sufficiently
  saturated.  Both changes are acceptable by Remark~\ref{expand-rogue}.
  Because $K$ is ACVF-like, the residue field of $\Oo'$ must be
  infinite.  Then every definable ideal is 00-connected by
  Fact~\ref{F:joh19}(2), so $I = I^{00}$.  If $R/I$ is finite, then
  $R^{00} = I^{00} = I$, and $R^{00}$ is definable, contradicting the
  fact that $R$ is a rogue domain.
\end{proof}

Suppose the rogue domain $R$ is sufficiently saturated.  As $R^{00}$ is an ideal in $\Oo$, it is defined by a cut
$\Xi$ in the value group:
\begin{equation*}
  R^{00} = \{x \in K : v(x) > \Xi\}.
\end{equation*}
The cut $\Xi$ cannot be the cut $+ \infty$ or $-\infty$, since
$R^{00}$ is neither $0$ nor $K$.
\begin{lemma} \phantomsection \label{structure-lemma}
  \begin{enumerate}
  \item The value group $\Gamma$ is densely ordered (not necessarily
    divisible).
  \item $\Xi$ is not the cut $\gamma^+$ infinitesimally above some
    $\gamma \in \Gamma$.  Equivalently, the set $\{\gamma \in \Gamma :
    \gamma < \Xi\}$ has no maximum.
  \item There is an increasing sequence $\gamma_0 < \gamma_1 <
    \gamma_2 < \cdots$ in $\Gamma$, such that the following things hold:
    \begin{gather*}
      R/\{x \in R : v(x) \ge \gamma_i\} \text{ is finite for each $i$}
      \\ \lim_{i \to \infty} \left|R/\{x \in R : v(x) \ge
      \gamma_i\}\right| = \infty.
    \end{gather*}
  \item If $\gamma_0 < \gamma_1 < \gamma_2 < \cdots$ is a sequence as
    in the previous point, then $\Xi$ is the limit of the sequence, in
    the sense that
    \begin{equation*}
      \{x \in \Gamma : x > \Xi\} = \bigcap_{i = 0}^\infty \{x \in
      \Gamma : x > \gamma_i\}.
    \end{equation*}
  \end{enumerate}
\end{lemma}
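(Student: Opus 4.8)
The plan is to establish the four points roughly in order, each feeding into the next. For (1), I would argue by contradiction: if $\Gamma$ is discretely ordered, then $\Oo$ has a local uniformizer $t$, so $\mm_0 = t\Oo$ is $\vee$-definable, hence $\Oo$ and $\mm_0$ are definable. But $\Oo$ is externally definable and equals $R^{00}:R^{00}$; by Lemma~\ref{lem-O-finest} it is the finest externally definable valuation ring on $K$, so it would be a \emph{definable} valuation ring on $K$, contradicting Lemma~\ref{strictly-finer} (which says $\Oo$ is strictly finer than every definable valuation ring, in particular $\Oo\ne\Oo'$ for any definable $\Oo'$, so $\Oo$ itself is not definable). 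So $\Gamma$ is densely ordered. For (2), suppose $\Xi = \gamma^+$ for some $\gamma \in \Gamma$. Then $R^{00} = \{x : v(x) > \gamma\}$ is $\vee$-definable (it is the union over $\delta > \gamma$ in $\Gamma$ of the definable sets $\{x : v(x) \ge \delta\}$, but more to the point it equals $\{x : v(x/c) > 0\} = c\mm_0$ for any $c$ with $v(c) = \gamma$), hence definable; again this contradicts $R$ being rogue. The equivalent reformulation that $\{\gamma \in \Gamma : \gamma < \Xi\}$ has no maximum is immediate.

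For (3), recall $R^{00} = \bigcap_i I_i$ for a filtered family of finite-index definable ideals $I_i \lhd R$ (Fact~\ref{profinite-r/r00}: $R/R^{00}$ is profinite). Each $I_i$ is an $\Oo$-submodule? — not necessarily, so I instead sandwich. Since $R^{00} = \{x \in R : v(x) > \Xi\}$ and the $I_i$ shrink down to $R^{00}$, for each $\gamma < \Xi$ in $\Gamma$ (there are arbitrarily large such $\gamma$, by (2) the set of them is cofinal below $\Xi$ and unbounded since $\Xi \ne -\infty$) the set $J_\gamma := \{x \in R : v(x) \ge \gamma\}$ is a definable $\Oo\cap R$-submodule of $R$ containing $R^{00}$, and $R^{00} = \bigcap_{\gamma < \Xi} J_\gamma$. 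By Fact~\ref{almost-comparable}(2) applied to the definable subgroups $I_i$ and $J_\gamma$ of $(R,+)$, one of $I_i/(I_i\cap J_\gamma)$ or $J_\gamma/(I_i \cap J_\gamma)$ is finite; using that $R^{00}$ has small (hence, for definable subgroups, finite) index in no proper definable subgroup trapped between it and $R$... — here I would argue that $R/J_\gamma$ is finite: fix $i$; if $J_\gamma/(I_i\cap J_\gamma)$ is finite then $R/J_\gamma$ embeds into $R/(I_i\cap J_\gamma)$ modulo a finite kernel correction, but $R/(I_i\cap J_\gamma)$ is the fiber product of the finite group $R/I_i$ with $R/J_\gamma$, forcing $R/J_\gamma$ finite; if instead $I_i/(I_i\cap J_\gamma)$ is finite for cofinally many $i$, then $R^{00} = \bigcap_i I_i \supseteq \bigcap (I_i \cap J_\gamma)$ has finite index in $J_\gamma$... — this needs care, and I will push it through to conclude $R/J_\gamma$ is finite for all $\gamma < \Xi$. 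Then $|R/J_\gamma| \to \infty$ as $\gamma \uparrow \Xi$, because otherwise the $J_\gamma$ would stabilize and $R^{00} = \bigcap_\gamma J_\gamma$ would be definable. Choosing any increasing cofinal sequence $\gamma_0 < \gamma_1 < \cdots$ below $\Xi$ (using (1), density, to get strict inequalities and (2) to get cofinality) gives the claim, after relabelling to pass to a subsequence along which the indices strictly increase to $\infty$.

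For (4): given such a sequence, $\bigcap_i J_{\gamma_i} \supseteq R^{00}$ trivially; conversely if $x \in R$ with $v(x) \ge \gamma_i$ for all $i$ but $v(x) \le \Xi$, i.e. $v(x)$ is an upper bound in $\Gamma$ for $\{\gamma_i\}$ that is $\le \Xi$, then $J_{\gamma_i} \subseteq \{y : v(y) \ge v(x)\}$ for all $i$ gives $|R/J_{\gamma_i}|$ bounded by $|R/\{y:v(y)\ge v(x)\}|$, contradicting $|R/J_{\gamma_i}| \to \infty$; so no such $x$ exists, meaning every $v(x) \ge$ all $\gamma_i$ already has $v(x) > \Xi$, which is exactly the displayed equality on cuts (translating back from $R$ to $K$ via multiplying by scalars is harmless since $\Oo$-ideals are determined by their cut). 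The main obstacle is the bookkeeping in (3): correctly using Fact~\ref{almost-comparable}(2) together with the profiniteness of $R/R^{00}$ to show each $J_\gamma$ has finite index without accidentally proving $R^{00}$ itself has finite index (which is false for a rogue domain). Everything else is soft: (1) and (2) are contradiction arguments off the rogue hypothesis plus Lemmas~\ref{strictly-finer} and~\ref{lem-O-finest}, and (4) is a compactness/cofinality comparison.
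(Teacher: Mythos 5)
The serious gap is in your part (2). You claim that $R^{00}=\{x\in K: v(x)>\gamma\}$ is $\vee$-definable ``as a union of the definable sets $\{x: v(x)\ge\delta\}$,'' but in a rogue domain those closed balls are \emph{not} definable: each is a scaled copy of $\Oo=R^{00}:R^{00}$, and $\Oo$ cannot be definable, since Lemma~\ref{strictly-finer} makes it strictly finer than every definable valuation ring on $K$. The fallback remark that $R^{00}=c\mm_0$ for $v(c)=\gamma$ only gives type-definability, which $R^{00}$ has in any case, so no contradiction with rogueness follows. This case genuinely needs more input: in the paper's argument, when $\Xi=\gamma^+$ one considers the image of $R\cap B_{\ge\gamma}(0)$ in $B_{\ge\gamma}(0)/B_{>\gamma}(0)\cong\mathbf{k}_0$; it is an infinite externally definable additive subgroup of the residue field, hence all of $\mathbf{k}_0$ by Corollary~\ref{C: no def additive subgroup} (which rests on Lemma~\ref{lem-O-finest} and Proposition~\ref{P:ext-val-2}), so $R\supseteq B_{\ge\gamma}(0)$, and that contradicts Lemma~\ref{constructed-aren't-rogue}. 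None of these ingredients appear in your sketch, and without them (2) does not go through.

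Part (3) also has a gap, which you yourself flag: you apply Fact~\ref{almost-comparable}(2) to $J_\gamma=\{x\in R: v(x)\ge\gamma\}$ as though it were definable, but a priori it is only $\vee$-definable (or externally definable), and the subsequent bookkeeping does not work as stated --- the ``fiber product'' step does not bound $|R/J_\gamma|$, and an intersection of infinitely many finite-index subgroups need not have finite index. The clean route, which is the paper's, is much shorter: for $\gamma<\Xi$ one has $R^{00}\subseteq J_\gamma$, so $J_\gamma$ is a $\vee$-definable subgroup of small index in $(R,+)$, and such subgroups are definable of finite index by a compactness argument; moreover the indices must be unbounded as $\gamma$ increases to $\Xi$, since otherwise they stabilize to a single definable group which, using (2), equals $\bigcap_{\gamma<\Xi}J_\gamma=R^{00}$, contradicting rogueness --- this is the stabilization argument you gesture at, and it yields (3) and (4) simultaneously. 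Your parts (1) and (4) are essentially correct and in the same spirit as the paper (in (4) the inclusion should read $\{y\in R: v(y)\ge v(x)\}\subseteq J_{\gamma_i}$, not the reverse), but as written the proposal does not prove (2) or (3).
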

\begin{proof}
  Each closed ball $B_{\ge \gamma}(0)=\{x\in K:v(x)\geq \gamma\}$ is $\vee$-definable, because it
  is a scaled copy of the $\vee$-definable ring $\Oo$.  Similarly,
  each open ball $B_{> \gamma}(0)$ is type-definable, being a scaled
  copy of the type-definable maximal ideal $\mm \lhd \Oo$.

  If $\Gamma$ is not densely ordered, then it is discretely ordered,
  and every closed ball is an open ball (of a different radius).  In
  particular, $\Oo$ is type-definable, and therefore definable,
  contradicting Lemma~\ref{strictly-finer} which says that $\Oo$ is
  strictly finer than any definable valuation ring.  Therefore
  $\Gamma$ is densely ordered, proving (1).
  \begin{claim}\label{C: claim on cut}
    If $\gamma > \Xi$, then $R/(R \cap B_{\ge \gamma}(0))$ is
    infinite.  If $\gamma < \Xi$, then $R/(R \cap B_{\ge \gamma}(0))$
    is finite, and $R \cap B_{\ge \gamma}(0)$ is definable.
  \end{claim}
  \begin{claimproof}
    If $\gamma > \Xi$, then $B_{\ge \gamma}(0) \subseteq R^{00}$ because $R^{00}$ is the ``ball of radius $\Xi$,'' so to speak.  Therefore $R \cap B_{\ge \gamma}(0) \subseteq R \cap R^{00} = R^{00}$, and
    \begin{equation*}
      \left|R : R \cap B_{\ge \gamma}(0)\right| \ge |R : R^{00}|.
    \end{equation*}
    Since $R^{00}$ is not definable, $|R:R^{00}|$ is not finite.
    
    Conversely, if $\gamma < \Xi$, then $B_{\ge \gamma}(0) \supseteq R^{00}$.  The intersection $R \cap B_{\ge \gamma}(0)$ is $\vee$-definable and
    \begin{equation*}
      \left|R : R \cap B_{\ge \gamma}(0)\right| \le |R : R^{00}|,
    \end{equation*}
    so $R \cap B_{\ge \gamma}(0)$ has small index in $R$.  In general,
    $\vee$-definable subgroups of small index have finite index and
    are definable, by a compactness argument (analogous to the fact
    that open subgroups of a compact Hausdorff group are clopen of
    finite index).
  \end{claimproof}
  Now we can prove (2).  Suppose for the sake of contradiction that
  $\Xi = \gamma^+$ for some $\gamma\in \Gamma$, so that
  \begin{equation*}
    R^{00} = \{x \in K : x > \gamma\} = B_{> \gamma}(0).
  \end{equation*}
  Note that
  \begin{equation*}
    R \supseteq R \cap B_{\ge \gamma}(0) \supseteq R \cap B_{>
      \gamma}(0) = B_{> \gamma}(0) = R^{00}.
  \end{equation*}
  The element $\gamma$ is less than the cut $\gamma^+$, so $R/(R \cap
  B_{\ge \gamma}(0))$ is finite by the claim.  On the other hand,
  $R/B_{>\gamma}(0) = R/R^{00}$ is infinite because $R$ is a rogue
  domain.  Therefore, $(R \cap B_{\ge \gamma}(0))/B_{>\gamma}(0)$ is
  infinite.  It is an externally definable subgroup of $B_{\ge
    \gamma}(0)/B_{> \gamma}(0)$. Since the latter is (externally) definably
  isomorphic to the residue field of $\Oo$,  by Corollary~\ref{C: no
    def additive subgroup}, we must have
  \begin{gather*}
    (R \cap B_{\ge \gamma}(0))/B_{>\gamma}(0) = B_{\ge \gamma}(0)/B_{> \gamma}(0) \\
    R \cap B_{\ge \gamma}(0) = B_{\ge \gamma}(0) \\
    R \supseteq B_{\ge \gamma}(0).
  \end{gather*}
  As $B_{\geq \gamma}(0)\supseteq R^{00}$,  $R/B_{\ge \gamma}(0)$ has finite index and $B_{\ge
    \gamma}(0)$ is an ideal in an externally definable valuation ring,
  contradicting Lemma~\ref{constructed-aren't-rogue}.  This proves
  (2).  In particular, the set $\{\gamma \in \Gamma : \gamma < \Xi\}$ has no
  maximum.

  For $\gamma \in \Gamma$, let $f(x) = \left|R : R \cap B_{\ge
    \gamma}(0)\right|$.  Clearly, $f$ is weakly increasing:
  \begin{equation*}
    \gamma \le \gamma' \implies R \cap B_{\ge \gamma}(0) \supseteq R \cap B_{\ge \gamma'}(0) \implies f(\gamma) \le f(\gamma').
  \end{equation*}
  Moreover, $f(\gamma)$ is finite if and only if $\gamma < \Xi$, by
   Claim \ref{C: claim on cut}.  To prove (3) and (4), it suffices to show that $f(\gamma)$ is
  unbounded as $\gamma$ approaches $\Xi$ from below.
  
  Suppose not.  Then there is some $\gamma_0 < \Xi$ and some $n$ such that
  \begin{equation*}
    \gamma_0 < \gamma < \Xi \implies f(\gamma) = n.
  \end{equation*}
  It follows that $R \cap B_{\ge \gamma}(0)$ must be some fixed
  subgroup $G$ of $R$ of index $n$, independent of $\gamma>\Xi$.  Moreover, $G$
  is definable by Claim \ref{C: claim on cut}.

  On the other hand, the fact that there is no maximum element below
  $\Xi$ means that
  \begin{align*}
    R^{00} &= \{x \in K : v(x) > \Xi\} \\
    &= \{x \in K : v(x) \ge \gamma \text{ for every } \gamma\in \Gamma \text{ with } \gamma< \Xi\} \\
    &= \bigcap_{\gamma < \Xi} B_{\ge \gamma}(0)
  \end{align*}
  and so
  \begin{equation*}
    R^{00} = R \cap R^{00} = \bigcap_{\gamma < \Xi} (R \cap B_{\ge
      \gamma}(0)) = G,
  \end{equation*}
  so $R^{00}$ is definable, contradicting the fact that $R$ is rogue.
\end{proof}

\begin{lemma} \label{circle-around}
  Let $R$ be a rogue domain.  Let $\Oo'$ be an externally definable
  valuation ring and let $v'$ be the corresponding valuation.  There cannot exist an
  ascending sequence $\gamma'_0 < \gamma'_1 < \gamma'_2 < \cdots$ in the
  value group of $v'$ such that
  \begin{gather*}
    R/\{x \in R : v'(x) \ge \gamma'_i\} \text{ is finite for each } i \\
    \lim_{i \to \infty} \left|R/\{x \in R : v'(x) \ge \gamma'_i\}\right| = \infty.
  \end{gather*}
\end{lemma}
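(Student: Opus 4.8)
The plan is to derive a contradiction from the assumed sequence $(\gamma_i')$. First I would normalize: replace $R$ by a sufficiently saturated elementary extension (the sequence survives, and $R$ stays rogue by Remark~\ref{expand-rogue}) and set $\Oo := R^{00}:R^{00}$, which by Lemma~\ref{lem-O-finest} is the finest externally definable valuation ring on $K$, so $\Oo \subseteq \Oo'$; write $v$ for its valuation, $\mm_0$ for its maximal ideal, $\mathbf{k}_0$ for its residue field, and $\Gamma$ for its value group. Passing to the Shelah expansion — still rogue by Remark~\ref{expand-rogue} — makes $\Oo'$, $v'$, $\Oo$, $v$ and all the balls below definable. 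Discarding an initial segment of the sequence (harmless, since the quotient sizes still tend to $\infty$) we may assume $\gamma_i' > 0$, so each $B'_i := \{x \in K : v'(x) \ge \gamma_i'\}$ is a definable ideal of $\Oo'$ contained in the maximal ideal of $\Oo'$; since $\Oo \subseteq \Oo'$, each $B'_i$ is a union of closed $v$-balls and hence an $\Oo$-submodule of $K$.

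Next I would analyse the limit $J := \bigcap_i (R \cap B'_i) = R \cap \bigcap_i B'_i$. Because $R \subseteq \Oo$ and $\bigcap_i B'_i$ is an $\Oo$-submodule of $K$, the set $J$ is an ideal of $R$; and since $R/J$ is an inverse limit of the finite groups $R/(R\cap B'_i)$ along a chain with $|R/(R\cap B'_i)| \to \infty$, the index $[R:J]$ is infinite. I would then identify $\bigcap_i B'_i$ precisely: it is a closed $v'$-ball $B'_{\gamma'}$ if $\{\gamma \in \Gamma' : \gamma > \gamma_i' \text{ for all } i\}$ has a least element $\gamma'$, and otherwise it is the maximal ideal of a proper coarsening $\Oo'' \supsetneq \Oo'$; in either case it is an $\Oo''$-submodule of $K$ for some valuation overring $\Oo''$ of $\Oo$. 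Comparing the ball structures of $v$ and $v'$ — and using from Lemma~\ref{structure-lemma}(1)--(2) that $\Gamma$ is densely ordered and that the cut defining $R^{00}$ has no immediate predecessor — one locates a $v$-radius $\gamma \in \Gamma$ with
\[
  R^{00} \subsetneq B_{\ge\gamma}(0) \subseteq \bigcap_i B'_i,
\]
chosen so that the open ball $B_{>\gamma}(0)$ is squeezed between $R^{00}$ and some $R \cap B'_j$ and therefore lies inside $R$.

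The endgame then copies the proof of Lemma~\ref{structure-lemma}(2). The quotient $(R \cap B_{\ge\gamma}(0))/(R\cap B_{>\gamma}(0))$ is an externally definable additive subgroup of $\mathbf{k}_0 \cong B_{\ge\gamma}(0)/B_{>\gamma}(0)$; it must be infinite, for otherwise the indices $|R/(R\cap B'_i)|$ would stay bounded. By Corollary~\ref{C: no def additive subgroup}(2) it is therefore all of $\mathbf{k}_0$, and together with $B_{>\gamma}(0) \subseteq R$ this forces $B_{\ge\gamma}(0) \subseteq R$. But $B_{\ge\gamma}(0)$ is an ideal of the externally definable valuation ring $\Oo$ with $R/B_{\ge\gamma}(0)$ finite, contradicting Lemma~\ref{constructed-aren't-rogue}.

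I expect the main obstacle to be the production of the radius $\gamma$ in the second paragraph: arranging simultaneously that $B_{\ge\gamma}(0) \subseteq \bigcap_i B'_i$, that $B_{>\gamma}(0)$ genuinely sits inside $R$ (not merely $R \cap B_{>\gamma}(0)$), and that $R/(R\cap B_{\ge\gamma}(0))$ remains finite, and then making rigorous the dichotomy ``either some jump $(R\cap B_{\ge\gamma}(0))/(R\cap B_{>\gamma}(0))$ is all of $\mathbf{k}_0$, or the quotient sizes stay bounded.'' This is exactly where the interplay between the two valuations — governed by the convex subgroup $\Delta$ with $\Gamma' = \Gamma/\Delta$ — and the hypothesis $\lim_i|R/(R\cap B'_i)| = \infty$ have to be exploited, and where the closed-ball-versus-coarsening case split of the previous step matters; one also has to treat the degenerate possibility $\Oo = \Oo'$ (where $\bigcap_i B'_i = R^{00}$ by Lemma~\ref{structure-lemma}(4)) separately, again via the structure results of Lemma~\ref{structure-lemma}.
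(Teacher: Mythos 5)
Your plan stalls exactly at the step you flag as the ``main obstacle,'' and the problem is not technical difficulty but impossibility: under the hypothesis of the lemma there is \emph{no} radius $\gamma$ with $R^{00} \subsetneq B_{\ge\gamma}(0) \subseteq \bigcap_i B'_i$, because that intersection is equal to $R^{00}$. Indeed, lift each $\gamma'_i$ to some $\gamma_i$ in $\Gamma$ (after arranging, as in the paper, that $\Oo \subsetneq \Oo'$, so $v'$ is the coarsening of $v$ by a non-trivial convex subgroup $\Delta$); the sandwich $\{x \in R : v'(x)\ge\gamma'_{i+1}\} \subseteq \{x \in R : v(x)\ge\gamma_i\} \subseteq \{x\in R : v'(x)\ge\gamma'_i\}$ shows the $v$-balls inherit the finiteness and unboundedness properties, so by Lemma~\ref{structure-lemma}(4) the cut $\Xi$ defining $R^{00}$ is the limit of the $\gamma_i$; and since the $\gamma'_i$ are strictly increasing, the four conditions ``$v'(x)>\gamma'_i$ for all $i$,'' ``$v(x)>\gamma_i$ for all $i$,'' ``$v(x)\ge\gamma_i$ for all $i$,'' ``$v'(x)\ge\gamma'_i$ for all $i$'' are all equivalent, so $\bigcap_i B'_i = R^{00}$. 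Your other requirements are inconsistent for the same reason: finiteness of $R/(R\cap B_{\ge\gamma}(0))$ forces $\gamma < \Xi$, hence $B_{\ge\gamma}(0) \supsetneq R^{00} = \bigcap_i B'_i$, which contradicts $B_{\ge\gamma}(0) \subseteq \bigcap_i B'_i$; and arranging $B_{>\gamma}(0)\subseteq R$ would already yield (via Corollary~\ref{C: no def additive subgroup} and Lemma~\ref{constructed-aren't-rogue}) the contradiction you are trying to build, so it cannot be ``arranged'' --- it is as strong as the conclusion. In short, the squeeze you need is provably unachievable, and no amount of case analysis on whether $\bigcap_i B'_i$ is a closed $v'$-ball or the maximal ideal of a coarsening will produce it.

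The correct endgame --- and the paper's actual one --- is to use the equality $\bigcap_i B'_i = R^{00}$ itself: each $B'_i$ is an $\Oo'$-module, hence so is $R^{00}$, giving $\Oo' \subseteq R^{00}:R^{00} = \Oo$ and contradicting Lemma~\ref{strictly-finer}, which (after passing to the Shelah expansion so that $\Oo'$ becomes definable, and \emph{then} to a sufficiently saturated elementary extension) guarantees $\Oo \subsetneq \Oo'$. Note also that your order of operations is backwards: you saturate first and take the Shelah expansion afterwards, which destroys the saturation needed both for Lemma~\ref{structure-lemma} and for the identification of $\Oo = R^{00}:R^{00}$; doing Shelah first and saturating second also eliminates the ``degenerate possibility $\Oo = \Oo'$'' that you leave unresolved, since Lemma~\ref{strictly-finer} then makes the inclusion strict. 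Your opening normalization and the observation that $[R : R\cap\bigcap_i B'_i]$ is infinite are fine, but the core of the argument is missing and cannot be completed along the route you propose.
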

\begin{proof}
    Replacing $R$ with its Shelah expansion, we reduce to the case where
  $\Oo'$ is definable.  Replacing $R$ with an elementary extension, we
  may assume that $R$ is sufficiently saturated.  Both changes preserve the
  fact that $R$ is rogue, by Remark~\ref{expand-rogue}. Furthermore, the existence of the ascending sequence is also preserved in the the elementary extension.
  
  Now let $\Oo$
  be the usual valuation ring $R^{00} : R^{00}$, and let $v :
  K \to \Gamma$ be its corresponding valuation.  By Lemma~\ref{strictly-finer}, $\Oo
  \subsetneq \Oo'$, meaning that $v'$ is a strict coarsening of $v$.
  In other words, $v'$ is the composition
  \begin{equation*}
    K \stackrel{v}{\to} \Gamma \to \Gamma/\Delta
  \end{equation*}
  for some non-trivial convex subgroup $\Delta \subseteq \Gamma$.

  Let $\gamma_i$ be an element in $\Gamma$ lifting $\gamma'_i \in
  \Gamma/\Delta$.  Note that
  \begin{equation*}
     \{x \in R : v'(x) \ge \gamma'_{i+1}\} \subseteq \{x \in R : v(x) \ge \gamma_i\} \subseteq \{x \in R : v'(x) \ge \gamma'_i\}
  \end{equation*}
  and so
  \begin{gather*}
    R/\{x \in R : v(x) \ge \gamma_i\} \text{ is finite for each } i \\
    \lim_{i \to \infty} \left|R/\{x \in R : v(x) \ge \gamma_i\}\right| = \infty.
  \end{gather*}
  By Lemma~\ref{structure-lemma}, the cut $\Xi$ defining $R^{00}$ is
  the limit of the $\gamma_i$:
  \begin{equation*}
    x \in R^{00} \iff v(x) > \Xi \iff (v(x) > \gamma_i \text{ for every } i) \iff (v(x) \ge \gamma_i \text{ for every } i).
  \end{equation*}
  For fixed $i$, we have
  \begin{equation*}
    v'(x) > \gamma'_i \implies v(x) > \gamma_i \implies v(x) \ge \gamma_i \implies v'(x) \ge \gamma'_i.
  \end{equation*}
  Therefore, the following four conditions are sequentially weaker:
  \begin{enumerate}
  \item $v'(x) > \gamma'_i$ for every $i$.
  \item $v(x) > \gamma_i$ for every $i$.
  \item $v(x) \ge \gamma_i$ for every $i$.
  \item $v'(x) \ge \gamma'_i$ for every $i$.
  \end{enumerate}
  But the first and fourth are equivalent because the sequence
  $\gamma'_0, \gamma'_1, \ldots$ is strictly increasing.  Thus all
  four conditions are equivalent to each other, and also to the
  condition $x \in R^{00}$.  In summary,
  \begin{equation*}
    x \in R^{00} \iff (v'(x) > \gamma'_i \text{ for all } i).
  \end{equation*}
  It follows that $\Oo' \cdot R^{00} \subseteq R^{00}$, or
  equivalently that $\Oo' \subseteq R^{00} : R^{00} = \Oo$, contradicting
  Lemma~\ref{strictly-finer}.
\end{proof}

Of course we can apply Lemma~\ref{circle-around} to $\Oo' = \Oo$
itself, and the conclusion directly contradicts parts (3) and (4) of
Lemma~\ref{structure-lemma}.  Therefore, \emph{rogue domains do not
exist}, and we have proven the following:
\begin{proposition} \label{last-prop}
  Let $R$ be a sufficiently saturated exceptional dp-minimal domain such that $\Frac(R)$ is
  ACVF-like.  Then $R^{00}$ is definable and has finite index in $R$.
  As a consequence, the valuation ring $\Oo = R^{00} : R^{00}$ is
  definable as well.
\end{proposition}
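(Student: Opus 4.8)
The plan is to argue by contradiction. If $R^{00}$ is \emph{not} definable, then, since $R$ is an exceptional dp-minimal domain with $\Frac(R)$ ACVF-like, $R$ is by definition a \emph{rogue domain} (Definition~\ref{def-rogue}). I will then derive a contradiction by playing Lemma~\ref{structure-lemma} against Lemma~\ref{circle-around}, exactly as foreshadowed in the paragraph preceding the statement. Since $R$ is already assumed sufficiently saturated, no passage to an elementary extension is needed.

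Concretely, set $\Oo = R^{00} : R^{00}$. By Lemma~\ref{lem-O-vee-def} and Corollary~\ref{C:R00:R00}, $\Oo$ is an externally definable valuation ring on $K = \Frac(R)$; let $v : K \to \Gamma$ be its associated valuation. Lemma~\ref{structure-lemma}(3) then produces an increasing sequence $\gamma_0 < \gamma_1 < \gamma_2 < \cdots$ in $\Gamma$ such that $R/\{x \in R : v(x) \ge \gamma_i\}$ is finite for every $i$ while $\lim_{i\to\infty}|R/\{x \in R : v(x) \ge \gamma_i\}| = \infty$. On the other hand, Lemma~\ref{circle-around}, applied to the externally definable valuation ring $\Oo' = \Oo$ together with its own valuation $v' = v$, asserts that no such sequence can exist. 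This contradiction shows that $R^{00}$ must be definable.

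It remains to read off the two ``consequences''. First, since $R$ is sufficiently saturated and $R^{00}$ is definable, the first bullet of Remark~\ref{g00-facts} gives that $R^{00}$ has finite index in $R$. Second, with $R^{00}$ definable, the set
\[
  \Oo = R^{00} : R^{00} = \{x \in K : x R^{00} \subseteq R^{00}\}
\]
is cut out by a first-order formula over the definable parameter set $R^{00}$, hence is itself definable (as a subring of $K$, and then as a valuation ring, the division being already established in Lemma~\ref{lem-O-vee-def}).

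I do not expect any genuine obstacle inside this proof: all the real work is already packed into Lemmas~\ref{structure-lemma} and~\ref{circle-around} (and, upstream, into Lemma~\ref{strictly-finer}, Lemma~\ref{lem-O-finest}, and Corollary~\ref{C: no def additive subgroup}). The only points requiring a moment's care are (i) checking that $\Oo$ is genuinely externally definable, so that Lemma~\ref{circle-around} applies to it with $\Oo' = \Oo$ — this is Corollary~\ref{C:R00:R00}; and (ii) confirming that the hypotheses of Lemma~\ref{structure-lemma} are met, i.e.\ that $R$ is a sufficiently saturated rogue domain, which is exactly the situation secured by the contradiction hypothesis together with the standing saturation assumption.
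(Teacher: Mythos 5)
Your proposal is correct and is essentially the paper's own argument: the paper also obtains the contradiction by applying Lemma~\ref{circle-around} with $\Oo' = \Oo = R^{00}:R^{00}$ and observing that its conclusion directly contradicts parts (3)--(4) of Lemma~\ref{structure-lemma}, so rogue domains do not exist, after which definability of $R^{00}$, finite index via saturation, and definability of $\Oo$ follow just as you say.
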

  With
Propositions~\ref{P:acf-pcf-like cover} and \ref{P:construction
  holds}, we can now prove the main theorem of the paper.

\begin{theorem} \phantomsection \label{main-thm}
    Let $R$ be a dp-minimal integral domain. If $R$ is not a valuation ring, then there exists a valuation subring $\Oo$ of $K=\Frac(R)$, a proper ideal $I\trianglelefteq \Oo$, and a finite subring of $R_0$ of $\Oo/I$ such that $R$ is the preimage of $R_0$ under the quotient map $\Oo\to \Oo/I$.  The data $(\Oo,I,R_0)$ can be chosen to be definable in $R$.
\end{theorem}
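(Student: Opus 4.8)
The plan is to reduce to the two cases already handled by the hard work of the paper and then perform a soft descent. Since fields are valuation rings (with the trivial valuation), the hypothesis that $R$ is not a valuation ring says exactly that $R$ is an \emph{exceptional} dp-minimal domain in the sense of Section~\ref{ss:exceptional}. By Fact~\ref{F:K-R-rank} the fraction field $K = \Frac(R)$ is a dp-minimal field, and by Proposition~\ref{P:acf-pcf-like cover} it is ACVF-like or $p$CF-like. I will first produce the required $(\Oo,I,R_0)$ over a sufficiently saturated elementary extension, using Propositions~\ref{P:construction holds} and~\ref{last-prop}, and then push the configuration down to $R$.

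\textbf{Step 1 (data over a saturated model).} Fix a sufficiently saturated $R^* \succeq R$; it is again an exceptional dp-minimal domain, and $K^* := \Frac(R^*)$ is ACVF-like or $p$CF-like accordingly. I claim there is a \emph{definable} valuation subring $\Oo^* \subseteq K^*$ with $R^* \subseteq \Oo^*$ and a \emph{definable} ideal $I^* \lhd \Oo^*$ with $I^* \subseteq R^*$ and $R^*/I^*$ finite. If $K^*$ is $p$CF-like, take $\Oo^* = \Oo_{\can}$ and $I^* = p^n\Oo_{\can}$ as provided by Proposition~\ref{P:construction holds}: there $p^n\Oo_{\can} \subseteq R^* \subseteq \Oo_{\can}$, and $\Oo_{\can}/p^n\Oo_{\can}$, hence also $R^*/I^*$, is finite. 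If $K^*$ is ACVF-like, take $\Oo^* = R^{00} : R^{00}$ and $I^* = (R^*)^{00}$; by Proposition~\ref{last-prop} these are definable and $[R^*:I^*] < \infty$, by Corollary~\ref{C:R00:R00} $I^*$ is an ideal of $\Oo^*$, and $R^* \subseteq \Oo^*$ by Remark~\ref{R:r-vs-o-exceptional} since $R^*$ is exceptional and $\Oo^*$ is externally definable. In both cases $I^* \subseteq R^* \subsetneq \Oo^*$ (the last inclusion strict because $R^*$ is not a valuation ring), so $I^*$ is a proper ideal of $\Oo^*$; moreover the image of $R^*$ in $\Oo^*/I^*$ is a finite subring, and since $I^* \subseteq R^*$ is an additive subgroup, $R^* = R^* + I^*$ is precisely the preimage of that subring under $\Oo^* \to \Oo^*/I^*$.

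\textbf{Step 2 (descent to $R$).} Fix formulas $\phi(x;\bar y)$, $\theta(x;\bar z)$ and parameters $\bar b^*, \bar c^*$ in $K^*$ with $\Oo^* = \phi(K^*;\bar b^*)$ and $I^* = \theta(K^*;\bar c^*)$, and set $m = |R^*/I^*| \in \Nn$. The assertion that there exist $\bar y,\bar z$ for which $\phi(\cdot;\bar y)$ defines a valuation subring of $K$ containing $R$ and $\theta(\cdot;\bar z)$ defines an ideal of $\phi(\cdot;\bar y)$ that is contained in $R$ and has index $m$ in $(R,+)$ is a first-order sentence about $R$ once $\phi$, $\theta$, $m$ are fixed: each clause (``defines a subring of $K$'', ``for every $a \in K^\times$ it contains $a$ or $a^{-1}$'', ``contains $R$'', ``is an ideal of the former set'', ``is contained in $R$'', ``has index $m$ in $R$'') is expressible. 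It holds in $R^*$ by Step~1, hence in $R$ because $R \preceq R^*$. Picking witnesses $\bar b, \bar c \in K$, set $\Oo = \phi(K;\bar b)$, $I = \theta(K;\bar c)$, and let $R_0$ be the image of $R$ in $\Oo/I$. Then $\Oo$ is a valuation subring of $K = \Frac(R)$ with $R \subseteq \Oo$, and $I$ is an ideal of $\Oo$ with $I \subseteq R$; it is proper since $R$, not being a valuation ring, satisfies $R \subsetneq \Oo$, forcing $I \subseteq R \subsetneq \Oo$. As $I \subseteq R \subseteq \Oo$ with $I$ an ideal of $\Oo$, the set $R_0$ is a finite subring of $\Oo/I$ of size $m$, and $R = R + I$ is exactly the preimage of $R_0$ under $\Oo \to \Oo/I$. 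All of $\Oo$, $I$, $R_0$ and the quotient map are definable in $R$, which is the required conclusion.

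The substantive content lives in Propositions~\ref{P:construction holds} and~\ref{last-prop}, which we may cite. The only genuine subtlety in assembling the theorem from them is that those results supply the valuation ring and ideal only over a saturated model, whereas the theorem demands data definable in $R$ itself; the descent in Step~2 — existentially quantifying over the \emph{parameters} of fixed defining \emph{formulas}, with the index pinned down as a fixed integer $m$ — is what circumvents this, and is the step at which one must be slightly careful.
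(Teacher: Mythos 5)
Your proof is correct and follows essentially the same route as the paper: reduce to the exceptional case, split via Proposition~\ref{P:acf-pcf-like cover} into the $p$CF-like and ACVF-like cases, and conclude from Propositions~\ref{P:construction holds} and~\ref{last-prop} respectively. The only real difference is that you spell out the descent from the saturated model as an explicit first-order transfer of the existence of parameters (the paper compresses this into ``there is no harm in assuming $R$ is sufficiently saturated''), and in the $p$CF-like case your detour through $R^*$ is unnecessary, since Proposition~\ref{P:construction holds} carries no saturation hypothesis and already yields the definable data over $R$ itself.
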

\begin{proof}
    Let $R$ be a dp-minimal integral domain which is not a valuation ring, i.e., it is exceptional. By Proposition~\ref{P:acf-pcf-like cover}, $\Frac(R)$ is either ACVF-like or pCF-like. If it is the latter, we conclude by Proposition~\ref{P:construction holds}, so assume that it is ACVF-like. There is no harm in assuming that $R$ is sufficiently saturated. By Proposition~\ref{last-prop}, $I = R^{00}$ is a definable ideal in the dp-minimal definable valuation ring $\Oo=R^{00}:R^{00}$, and $R^{00}$ has finite index in $R$.  The desired conclusion follows easily: take $R_0 = R/R^{00} \subseteq \Oo/R^{00}$.
\end{proof}

Incidentally, we can now strengthen Proposition~\ref{P:ext-val-2}:
\begin{proposition} \label{P:ext-val-3}
  Let $(K,+,\cdot,A)$ be a dp-minimal expansion of a field $K$ by an
  infinite, proper additive subgroup $(A,+) \subseteq (K,+)$.  Then
  there is a non-trivial definable valuation ring on $K$.
\end{proposition}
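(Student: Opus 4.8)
The plan is to re-run the proof of Proposition~\ref{P:ext-val-2} inside a sufficiently saturated model, where Proposition~\ref{last-prop} now upgrades the valuation ring obtained in the ACVF-like case from externally definable to genuinely definable, and then to reflect the resulting first-order statement back down to $(K,+,\cdot,A)$.

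First I would pass to a sufficiently saturated elementary extension $M^* = (K^*,+,\cdot,A^*) \succ (K,+,\cdot,A)$; here $A^*$ is again an infinite proper additive subgroup, so Proposition~\ref{P:A-bdd} produces an infinite definable proper subring $R^* := A^* : A^*$ of $K^*$ with $\Frac(R^*) = K^*$, and $R^*$ is a dp-minimal domain, not necessarily pure. If $R^*$ is a valuation ring we are already finished, since a proper subring with fraction field $K^*$ is a non-trivial valuation ring. Otherwise $R^*$ is exceptional, and Proposition~\ref{P:acf-pcf-like cover} shows $K^* = \Frac(R^*)$ is ACVF-like or $p$CF-like. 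In the $p$CF-like case, the canonical henselian valuation ring on $K^*$ is definable with finite residue field (by \cite[Theorem~4.7]{johnson21classificationdpmin}, exactly as in the proof of Proposition~\ref{P:ext-val-2}), hence a non-trivial definable valuation ring. In the ACVF-like case, Proposition~\ref{last-prop} applies: $\Oo^* := (R^*)^{00} : (R^*)^{00}$ is a definable valuation ring on $K^*$, and it is non-trivial, for otherwise $(R^*)^{00}$ would be an ideal of the field $K^*$ and hence equal to $0$ or to $K^*$, whereas Proposition~\ref{last-prop} tells us $(R^*)^{00}$ has finite index in the infinite ring $R^* \subsetneq K^*$. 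Either way, $M^*$ carries a non-trivial definable valuation ring on $K^*$.

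Finally I would reflect down. Fix a formula $\phi(x;\bar z)$ and parameters $\bar b$ in $M^*$ with $\phi(K^*;\bar b)$ a non-trivial valuation subring of $K^*$, and let $\chi(\bar z)$ be the first-order formula expressing that $\phi(-;\bar z)$ defines a subring of $K$ satisfying $\forall x\,(x \neq 0 \to \phi(x;\bar z) \vee \phi(x^{-1};\bar z))$ together with $\exists x\,\neg \phi(x;\bar z)$. Since $M^* \models \chi(\bar b)$ we get $M^* \models \exists \bar z\, \chi(\bar z)$, and therefore $M \models \exists \bar z\, \chi(\bar z)$ because $M \preceq M^*$; any witnessing tuple defines a non-trivial valuation ring on $K$.

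The only delicate step is this last reflection: Proposition~\ref{last-prop} yields definability only in the saturated model, so one has to observe that, for a fixed defining formula $\phi$, the assertion ``$\phi$ defines a non-trivial valuation ring for some choice of parameters'' is a single first-order sentence, which thus descends along $M \preceq M^*$. The remaining ingredients---that $R^*$ is again exceptional and that Propositions~\ref{P:A-bdd}, \ref{P:acf-pcf-like cover} and \ref{last-prop} are applicable in the saturated model---are routine, the former because being exceptional is expressed by first-order sentences true in $M$ hence in $M^*$.
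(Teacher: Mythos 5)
Your proposal is correct and follows essentially the same route as the paper: pass to a saturated model, apply Proposition~\ref{P:A-bdd} to get $R = A:A$, split into the valuation-ring, $p$CF-like, and ACVF-like cases, and in the last case invoke Proposition~\ref{last-prop} to get the definable valuation ring $R^{00}:R^{00}$. Your explicit reflection step and the non-triviality check for $R^{00}:R^{00}$ are just careful spellings-out of what the paper compresses into ``there is no harm in assuming sufficient saturation,'' so there is no substantive difference.
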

\begin{proof}
  There is no harm in assuming sufficient saturation. By Proposition~\ref{P:A-bdd}, the ring $R = A:A$ is an infinite
  definable proper subring of $K$ with $\Frac(R) = K$.  If $R$ is a
  valuation ring, we are done.  Otherwise, $R$ is exceptional.  If $K$
  is $p$CF-like, then the canonical valuation is a non-trivial
  definable valuation.  If $K$ is ACVF-like, Proposition~\ref{last-prop}
  gives a non-trivial valuation ring $\Oo = R^{00} : R^{00}$.
\end{proof}

\section{Remarks on dp-minimal commutative rings}
\label{S:ring-remarks}
Having classified dp-minimal integral domains, it is natural to ask what can be said about more general dp-minimal commutative rings.  As a first step, we prove the following:
\begin{proposition} \phantomsection \label{P:non-domains}
    \begin{enumerate}
        \item Every dp-minimal (commutative) ring has the form $R \times S$, where $R$ is a dp-minimal henselian local ring and $S$ is a finite ring.
        \item If $R$ is a dp-minimal local ring, then the prime ideals of $R$ are linearly ordered.  Consequently, there is a unique minimal prime ideal.
        \item If $R$ is a dp-minimal local ring, then every prime ideal containing the zero divisors is comparable to any principal ideal. 
    \end{enumerate}
\end{proposition}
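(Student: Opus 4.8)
\textit{Part (1).} First I would extract the indecomposable pieces. The idempotents $E=\{e\in R:e^2=e\}$ form a Boolean algebra definable in $R$ (with $e\wedge f=ef$, $e\vee f=e+f-ef$, $\neg e=1-e$), so $E$ is NIP; if $E$ were infinite it would contain an infinite sequence $e_1,e_2,\dots$ of pairwise disjoint nonzero elements, and then for every $n$ the formula ``$xy=x$'' would shatter $\{e_1,\dots,e_n\}$ via the parameters $b_S=\bigvee_{i\in S}e_i$ ($S\subseteq\{1,\dots,n\}$), contradicting NIP. Hence $E$ is finite; writing $1$ as a sum of orthogonal primitive idempotents gives $R=\prod_{i=1}^m R_i$ with each $R_i=e_iR$ an indecomposable definable subring. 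If two factors $R_i,R_j$ were infinite, then $R_i\times R_j\cong(e_i+e_j)R$ embeds definably in $R$ and has dp-rank $\ge 2$ (combine, via the coordinate projections, ict-patterns of depth $1$ coming from each factor), contradicting dp-minimality; since $R$ is infinite, exactly one factor $R_0$ is infinite and $S:=\prod_{i\ne 0}R_i$ is finite. That $R_0$ is local follows from the lemma in Part (2): $R_0$ has a unique minimal prime $\mathfrak r$, the quotient $R_0/\mathfrak r$ is a dp-minimal domain hence local, and every maximal ideal of $R_0$ contains $\mathfrak r$ and projects onto the unique maximal ideal of $R_0/\mathfrak r$. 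For henselianity I would use that the nilradical of any ring is a locally nilpotent ideal inside the Jacobson radical, so $(R_0,\mathfrak r)$ is a henselian pair and $R_0$ is henselian iff $R_0/\mathfrak r$ is; and $R_0/\mathfrak r$, being a dp-minimal local domain, is henselian by the classification (Theorem~\ref{main-thm}): it is a field, or a dp-minimal valuation ring (henselian by Fact~\ref{dpmvr}), or an exceptional dp-minimal domain $R'\subseteq\Oo$, in which case lifting a coprime monic factorization from the finite residue field to the henselian valuation ring $\Oo$ and comparing, via uniqueness of henselian lifts, the resulting factorizations over $\Oo/I$ and over the finite subring $R'_0\subseteq\Oo/I$ shows the lift already has coefficients in $R'$.

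\textit{Part (2).} The key lemma is that \emph{every dp-minimal ring has a unique minimal prime}. If $\pp_1\ne\pp_2$ were distinct minimal primes, then $\bar R:=R/(\pp_1\cap\pp_2)$ is dp-minimal and embeds in $R/\pp_1\times R/\pp_2$, a product of two infinite dp-minimal domains; the ideals $\bar\pp_i:=\pp_i/(\pp_1\cap\pp_2)$ are definable, nonzero (by minimality), and satisfy $\bar\pp_1\cap\bar\pp_2=0$. By almost-comparability of definable subgroups in a dp-minimal abelian group (Fact~\ref{almost-comparable}), one of them, say $\bar\pp_1$, is finite; but its image under $\bar R\twoheadrightarrow R/\pp_2$ is then a finite nonzero ideal of the domain $R/\pp_2$, forcing $R/\pp_2$ to be finite --- impossible. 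Granting this, if $R$ is dp-minimal and local with unique minimal prime $\mathfrak r$, then every prime of $R$ contains $\mathfrak r$, and the primes of the dp-minimal domain $R/\mathfrak r$ are linearly ordered (\S\ref{ss:basicbasic}), so the primes of $R$ are linearly ordered with least element $\mathfrak r$.

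\textit{Part (3).} Let $\mathfrak r$ be the unique minimal prime (the nilradical), let $\pp\supseteq Z$ be a prime containing the zero divisors, and let $a\in R$. If $a\in\pp$ then $aR\subseteq\pp$, and if $a$ is a unit then $aR=R\supseteq\pp$; so assume $a\notin\pp$, whence $a\notin Z$, i.e.\ $a$ is a regular non-unit, and I must show $\pp\subseteq aR$. Since primes are linearly ordered (Part~(2)), $\pp\subseteq P_a$, the largest prime missing $a$, which is non-maximal. In the dp-minimal domain $R/\mathfrak r$ the image $\bar a$ is a non-unit, $P_a/\mathfrak r$ is the largest prime missing $\bar a$, and by \cite{dp-min-domains} the localization $V=(R/\mathfrak r)_{P_a/\mathfrak r}=\bigcup_n\bar a^{-n}(R/\mathfrak r)$ is a valuation ring in which $\bar a$ is a unit; for $x$ in its maximal ideal $P_a/\mathfrak r$ (an ideal of $R/\mathfrak r$) one has $v(x\bar a^{-1})=v(x)>0$, so $x\bar a^{-1}\in P_a/\mathfrak r\subseteq R/\mathfrak r$, whence $P_a/\mathfrak r\subseteq\bar a(R/\mathfrak r)$ and therefore $\pp\subseteq aR+\mathfrak r$. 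Finally one checks $\mathfrak r$ is finite --- were it infinite it would contain a nonzero square-zero ideal which, together with the infinite quotient $R/\mathfrak r$, would produce an ict-pattern of depth $2$ --- so multiplication by the regular element $a$ permutes the finite set $\mathfrak r$, giving $\mathfrak r=a\mathfrak r\subseteq aR$ and hence $\pp\subseteq aR+\mathfrak r=aR$. The hard parts will be the henselianity step in Part~(1), which genuinely leans on the full classification of dp-minimal domains, and the finiteness of the nilradical used in Part~(3) (where dp-minimality is needed most essentially); the remaining ingredients --- the Boolean-algebra argument, the combination of ict-patterns, and almost-comparability --- are comparatively soft.
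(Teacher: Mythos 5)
Your Part (2) is essentially the paper's own argument: in a local ring two incomparable primes are automatically non-maximal, so both quotients $R/\pp$, $R/\qq$ are infinite domains and Fact~\ref{almost-comparable} applied to the externally definable primes gives the contradiction. The problem is your ``key lemma'' that \emph{every} dp-minimal ring has a unique minimal prime: it is false as stated. For example $\Zz_p\times\Ff_p$ is a dp-minimal ring (it is interpretable in $\Zz_p$ as $\Zz_p\times\Zz_p/p\Zz_p$, whose underlying sort is in definable bijection with $\Zz_p\times\{0,\dots,p-1\}$, a finite union of dp-minimal sets), and it has the two minimal primes $0\times\Ff_p$ and $\Zz_p\times 0$. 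The failure occurs exactly at the step you leave unjustified, ``forcing $R/\pp_2$ to be finite --- impossible'': nothing excludes a minimal prime that is simultaneously maximal with finite residue field, which is precisely what happens in the example. This is harmless inside Part (2), but in Part (1) you invoke the lemma for the indecomposable factor $R_0$ \emph{before} knowing it is local, and for an indecomposable dp-minimal ring your argument does not rule out that configuration; hence the locality of $R_0$ (and with it the henselian-pair step, which presupposes $R_0$ local) is not established. The paper sidesteps all of this by citing the structure theorem for dp-finite commutative rings \cite[Theorem~1.3]{johnson2023}, which already provides the decomposition into a henselian local factor and a finite factor; your idempotent/Boolean-algebra reduction and your Hensel-lift argument for the exceptional domains are nice and plausibly repairable, but they do not yet yield (1).

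Part (3) breaks at the last step. The claim that $\mathfrak r$ must be finite (``an infinite square-zero ideal together with the infinite quotient $R/\mathfrak r$ would produce an ict-pattern of depth $2$'') is false. Let $\Oo$ be the valuation ring of the Hahn field $\Cc((t^{\Zz\times\Zz}))$ with value group $\Zz\times\Zz$ ordered lexicographically; it is dp-minimal by Fact~\ref{dpmvr}. Put $\qq=\{x\in\Oo: v(x)\ge(1,m)\text{ for some }m\}$ and $I=\{x\in\Oo: v(x)\ge(2,m)\text{ for some }m\}$. As in Proposition~\ref{prop-construction}, $I$ is externally definable, so $R=\Oo/I$ is interpretable in the Shelah expansion of $\Oo$ and is a dp-minimal local ring. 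Its nilradical $\qq/I$ is an \emph{infinite} ideal with $(\qq/I)^2=0$, the quotient $R/(\qq/I)$ is infinite, and the image of any element of value $(0,1)$ is a regular non-unit outside the prime $\qq/I$, which contains all zero divisors; so exactly your situation occurs with $\mathfrak r$ infinite, and no depth-$2$ pattern can exist since $R$ is dp-minimal. Consequently your derivation of $\mathfrak r\subseteq aR$, and hence of $\pp\subseteq aR$, is unsupported (the conclusion happens to hold in this example, but not for your reason). The paper's proof of (3) never touches the nilradical: it shows $\pp^\phi R_\pp=\pp^\phi$ by combining almost-comparability with Nakayama's lemma, and then uses the injectivity of $R\to R_\pp$ (this is where $\pp$ containing the zero divisors enters) to conclude $\pp\subseteq aR$ directly. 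As written, Parts (1) and (3) therefore contain genuine gaps, while Part (2) is fine once the lemma is restricted to local rings.
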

\begin{proof}
    \begin{enumerate}
        \item If $R$ is a dp-finite commutative ring, then $R$ decomposes as a finite direct product $R_1 \times R_2 \times \cdots \times R_n$ where each $R_i$ is a henselian local ring \cite[Theorem~1.3]{johnson2023}.  The decomposition is definable, and so $\dpr(R) = \sum_{i=1}^n \dpr(R_i)$.  In the case where $R$ is dp-minimal, it follows that one of the $R_i$ is a dp-minimal henselian local ring and the rest of the factors are finite.
        \item Otherwise, there are two non-maximal prime ideals $\pp$ and $\qq$ which are incomparable.  Then $(\pp+\qq)/\pp$ is a non-zero ideal in the integral domain $R/\pp$.  Since $\pp$ is non-maximal, $R/\pp$ is not a field, so $R/\pp$ is infinite and every non-zero ideal is infinite.  Thus $(\pp + \qq)/\pp$ is infinite.  Similarly, $(\pp+\qq)/\qq$ is infinite.  Equivalently, $\pp/(\pp \cap \qq)$ and $\qq/(\pp \cap \qq)$ are infinite.  As $\pp, \qq$ are externally definable, this contradicts dp-minimality (Fact~\ref{almost-comparable}).
        \item Let $\pp$ be a prime ideal of $R$. Recall that there is a ring homomorphism $\phi: R\to R_\pp$ with \[\ker\phi = \{c\in R\mid \exists s\notin \pp\ cs =0\}.\]
        We denote by $R^\phi,\pp^\phi,a^\phi,$ etc the images in $R_\pp$. Note that $\pp^\phi$ is a prime ideal of $R^\phi$ and $\mm^\phi$ is the maximal ideal of $R^\phi$. 
        \begin{claim}
            $\pp^\phi = \pp^\phi R_{\pp}$
        \end{claim}
\begin{claimproof}
        It is easy to check that $R^\phi \cap \pp^\phi R_{\pp} = \pp^\phi$, so it is enough to prove that $R^\phi = R^\phi+\pp^\phi R_{\pp}$. We may assume that $\pp^\phi$ is non-maximal in $R^\phi$. By Facts \ref{F:prime ideals are ext} and \ref{F:K-R-rank}, $(R_{\pp},R^\phi,\pp^\phi,\ldots)$ is dp-minimal hence by Fact \ref{almost-comparable}, one of $R^\phi/\pp^\phi$ or $\pp^\phi R_{\pp}/\pp^\phi$ has finite index as an abelian group. As $\pp^\phi$ is non-maximal, $R/\pp^\phi$ is infinite hence $\pp^\phi R_{\pp}/\pp^\phi$ is finite. In particular, extending a set of representative of $\pp^\phi R_{\pp}/\pp^\phi$ by the element $1$ yields that $R^\phi+\pp^\phi R_{\pp}$ is a finitely generated $R^\phi$-module. Let $\mm^\phi$ be the maximal ideal of $R^\phi$, then one easily checks that $R^\phi+\pp^\phi R_{\pp}$ is a local ring with maximal ideal $\mm^\phi+\pp^\phi R_{\pp}$. Since $R^\phi/\mm^\phi$ surjects onto $(R^\phi+\pp^\phi R_\pp)/(\mm^\phi+\pp^\phi R_\pp)$, $1+\mm^\phi+\pp^\phi R_\pp$ generates $(R^\phi+\pp^\phi R_\pp)/(\mm^\phi+\pp^\phi R_\pp)$ as an $R^\phi$-module. Restricting the module action from $R^\phi$ to the Jacobson ideal $\mm^\phi$ of $R^\phi$, we have $\mm^\phi(R^\phi+\pp^\phi R_{\pp}) = \mm^\phi+\pp^\phi R_{\pp}$. By Nakayama's lemma, the generator $1+\mm^\phi+\pp^\phi R_\pp$ of the $R^\phi$-module $(R^\phi+\pp^\phi R_{\pp})/(\mm^\phi+\pp^\phi R_\pp)$ lifts to a generator of $R^\phi+\pp^\phi R_{\pp}$ as an $R^\phi$-module, so $R^\phi = R^\phi+\pp^\phi R_{\pp}$. 
\end{claimproof}

        To conclude, assume that $\pp$ contains all the zero divisors of $R$. Then the map $\phi:R\to R_\pp$ is injective. Let $a\in R$. If $a\in \pp$ then we are done, otherwise, $a\notin \pp$ hence $(1/a^\phi) \pp^\phi \seq \pp^\phi R_{\pp} = \pp^\phi$, so that if $b\in \pp$ there exists $c\in \pp$ such that $b^\phi = a^\phi c^\phi$. As $\phi$ is injective, $b = ac$, so we conclude that $\pp\seq aR$.
        \qedhere
    \end{enumerate}
\end{proof}

The comparability of prime ideals fails for rings of dp-rank 2: consider the fiber product $\Oo \times_k \Oo$ where $(K,\Oo) \models \ACVF$ and $k$ is the residue field.

If $R$ is a henselian local dp-minimal ring and $\pp$ is the unique minimal prime ideal, then $\pp$ is the nilradical $\sqrt{0}$, so every element of $\pp$ is nilpotent.  The quotient $R/\pp$ is a dp-minimal integral domain, whose structure we understand by the main theorems of this paper.  

\appendix

\section{Appendix: proof of Fact~\ref{dpm-fields-class}}
\label{appendix}
Recall Fact~\ref{dpm-fields-class}:
\begin{fact}\label{F:appendix}
  An infinite field $(K,+,\cdot)$ is dp-minimal if and only if there
  is a henselian defectless valuation ring $\Oo \subseteq K$ with
  maximal ideal $\mm$ such that
  \begin{enumerate}
  \item The value group $\Gamma := K^\times/\Oo^\times$ is dp-minimal as an
    ordered abelian group (possibly trivial).
  \item The residue field $k := \Oo/\mm$ is algebraically closed, real
    closed, or $p$-adically closed for some prime $p$.
  \item If the residue field $k$ is algebraically closed of
    characteristic $p > 0$, then the interval $[-v(p),v(p)] \subseteq
    \Gamma$ is $p$-divisible, where $v(p) = +\infty$ when
    $\characteristic(K) = p$.
  \end{enumerate}
\end{fact}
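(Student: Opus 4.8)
The plan is to bootstrap from the saturated case, which is \cite[Theorem~7.3]{johnson21classificationdpmin}. Since dp-minimality depends only on the complete theory, $K$ is dp-minimal if and only if some (equivalently, every) sufficiently saturated elementary extension $\hat K \succeq K$ is dp-minimal; so the task is to transfer the \emph{existence} of a henselian defectless valuation ring satisfying (1)--(3) between $K$ and $\hat K$. Passing from $K$ up to $\hat K$ is easy, since one carries the valuation along as part of a valued-field elementary extension; producing such a valuation on the smaller field $K$ is the real content.

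For the ``if'' direction, suppose $\Oo \subseteq K$ is henselian and defectless with value group $\Gamma$ and residue field $k$ satisfying (1)--(3). Choose a sufficiently saturated $(\hat K,\hat\Oo) \succeq (K,\Oo)$ in the language of valued fields. Then all of the relevant properties transfer upward: henselianity is given by a first-order scheme; $\hat\Gamma \equiv \Gamma$ is still dp-minimal, because dp-minimality of an ordered abelian group is a property of the theory and, by Fact~\ref{dp-minimal-oag-classification}, amounts to the scheme ``$|\Gamma/n\Gamma|$ is finite'' with each instance first-order; $\hat k \equiv k$ is still algebraically closed, real closed, or $p$-adically closed, each being closed under elementary equivalence; and the $p$-divisibility of the $\emptyset$-definable interval $[-v(p),v(p)]$ (where $v(p)$ is the value of $p\cdot 1$, or $+\infty$ in equicharacteristic $p$) is first-order in the value group. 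Defectlessness transfers as well: in every case except ``$k$ algebraically closed of positive characteristic'' the residue characteristic is $0$, so $\hat\Oo$ is automatically defectless; in the remaining case the $p$-divisibility hypothesis makes $(\hat K,\hat\Oo)$ tame, and tameness---hence defectlessness---passes to elementary extensions. Thus $\hat K$ satisfies the hypotheses of \cite[Theorem~7.3]{johnson21classificationdpmin}, hence is dp-minimal, hence so is $K$.

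For the ``only if'' direction, assume $K$ is dp-minimal; then $\hat K$ is dp-minimal, and \cite[Theorem~7.3]{johnson21classificationdpmin} supplies a henselian defectless $\hat\Oo \subseteq \hat K$ satisfying (1)--(3). The difficulty is that $\hat\Oo$ need not be definable---e.g.\ in the ACVF-like case---so it cannot be na\"ively cut down to $K$. The plan is to show $\hat\Oo$ may instead be taken to be a valuation that is determined uniformly by the theory of the field and therefore has a canonical counterpart on $K$. By Remark~\ref{rem:trichotomy} the canonical henselian valuation of $\hat K$ is henselian with residue field algebraically closed, real closed, or finite, and these three cases are mutually exclusive (Proposition~\ref{three-cases}). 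In the algebraically closed and real closed cases this valuation already witnesses (1)--(3) (possibly after a controlled modification in positive residue characteristic); in the $p$CF-like case its residue field is merely finite, and one must instead pass to the uniformly defined coarsening whose residue field is $p$-adically closed---concretely, by Remark~\ref{ocan-00}, the localization of $\Oo_{\can}$ at the second-largest prime ideal $\Oo_{\can}^{00}$, whose residue field $\Frac(\Oo_{\can}/\Oo_{\can}^{00})$ is a finite extension of $\Qq_p$. In each case the resulting valuation is ``$\emptyset$-definable'' in the weak sense that its theory is determined by that of the field: $L \equiv L'$ gives elementary equivalence of the associated valued fields, and $L \preceq L'$ gives an elementary pair. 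Granting this, the valuation $\Oo$ on $K$ corresponding to $\hat\Oo$ satisfies $(K,\Oo)\preceq(\hat K,\hat\Oo)$, and conditions (1)--(3) along with henselian and defectless descend by the first-order considerations used in the ``if'' direction.

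The main obstacle is precisely this descent: singling out, in each case of the trichotomy, a valuation on $\hat K$ that witnesses (1)--(3) and is preserved on passing to the elementary substructure $K$. This hinges on the good behaviour of the canonical henselian valuation (and its distinguished coarsenings) under elementary equivalence, for which I would draw on the standard theory of henselian valuations together with \cite[Theorems~4.7, 4.8]{johnson21classificationdpmin}, and---in the $p$CF-like case---on the analysis of Remark~\ref{ocan-00} identifying which coarsening of $\Oo_{\can}$ has $p$-adically closed residue field. The transfer of defectlessness is comparatively routine, splitting into residue characteristic $0$ (automatic) and the tame case.
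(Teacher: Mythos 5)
Your overall strategy---prove the statement by transferring between $K$ and a saturated $\hat K$ and invoking \cite[Theorem~7.3]{johnson21classificationdpmin} there---is genuinely different from the paper's, but it has a real gap exactly at the point you flag as ``the main obstacle'': the descent of $\hat\Oo$ to $K$ in the ``only if'' direction. You need a valuation $\Oo$ on $K$ with $(K,\Oo)\preceq(\hat K,\hat\Oo)$ (or at least enough of this to pull back henselianity, defectlessness, and (1)--(3)), and you propose to get it by choosing $\hat\Oo$ ``canonically.'' But in the ACVF-like and RCVF-like cases the witnessing valuation is typically not definable (e.g.\ the natural valuation on $\Rr((t^{\Qq}))$ or on a non-trivially valued algebraically closed field), and for a non-definable valuation ring the restriction $\hat\Oo\cap K$ to an elementary submodel need not be henselian, need not have the same kind of residue field, and the pair need not be elementary; the canonical henselian valuation and $\Oo_\infty$ are only type-/$\vee$-definable, and elementarity of the corresponding pairs under $L\preceq L'$ is precisely what would have to be proved---it is not supplied by the trichotomy remarks or by Remark~\ref{ocan-00} (which itself assumes saturation). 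In other words, the step you defer is essentially the whole content of the unsaturated statement. The paper avoids this entirely by never descending: it works on $K$ itself, taking $\Oo_\infty$ to be the intersection of all $0$-definable valuation rings of $K$, quoting \cite[Theorem~1.2]{johnson21classificationdpmin} (which needs no saturation) to get that $\Oo_\infty$ is henselian with finite, real closed, or algebraically closed residue field, coarsening by the convex hull of $\Zz\cdot v(p)$ in the finite-residue case to reach a $p$-adically closed residue field, and then using \cite[Proposition~5.14]{HaHaStrdep} (dp-minimality is preserved by naming a henselian valuation) together with Fact~\ref{dpmvr} to read off defectlessness and conditions (1) and (3).

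There is a second, smaller problem in your ``if'' direction. Your claim that condition (3) makes $(\hat K,\hat\Oo)$ tame is wrong in mixed characteristic: tameness requires the whole value group to be $p$-divisible, whereas (3) only gives $p$-divisibility of $[-v(p),v(p)]$; so ``tameness passes to elementary extensions'' does not directly yield the upward transfer of defectlessness (one would have to decompose into an equicharacteristic-zero coarsening and a tame core along a non-definable convex subgroup, and argue that this interacts correctly with the elementary extension). More to the point, this whole transfer is unnecessary: since Fact~\ref{dpmvr} carries no saturation hypothesis, the ``if'' direction follows immediately by applying it to $(K,\Oo)$---conditions (1)--(3) make $\Oo$ a dp-minimal valuation ring (using that ACF, RCF, and $p$CF are dp-minimal), and then $K=\Frac(\Oo)$ is dp-minimal by Fact~\ref{F:K-R-rank}---which is exactly what the paper does.
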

\begin{proof}
The ``if'' direction holds by the classification of dp-minimal
valuation rings (Fact~\ref{dpmvr} above, originally from
\cite[Theorems~1.5, 1.6]{johnson21classificationdpmin}), together with
the the well-known fact that $\ACF$, $\RCF$, and $p$CF are dp-minimal.
It remains to prove the ``only if'' direction.  Suppose $K$ is dp-minimal.
\begin{claim}
  There is a henselian valuation ring $\Oo$ on $K$ whose residue field
  is $p$-adically closed, real closed, or algebraically closed.
\end{claim}
\begin{claimproof}
  Let $\Oo_\infty$ be the intersection of all 0-definable valuation
  rings on $K$.  By \cite[Theorem~1.2]{johnson21classificationdpmin},
  $\Oo_\infty$ is a henselian valuation ring whose residue field
  $k_\infty$ is finite, real closed, or algebraically closed.  In the
  latter two cases, we can simply take $\Oo = \Oo_\infty$.  Suppose we
  are in the first case: $k_\infty$ is finite.  Since $\Oo_\infty$ is
  henselian, the expansion $(K,\Oo_\infty)$ is dp-minimal by
  \cite[Proposition~5.14]{HaHaStrdep}.  By the classification of
  dp-minimal valuation rings (Fact~\ref{dpmvr}), $\Oo_\infty$ is
  finitely ramified.  Let $v_\infty : K^\times \to \Gamma_\infty$ be
  the valuation induced by $\Oo_\infty$.  Let $\Delta$ be the convex
  hull of $\Zz \cdot v_\infty(p)$ in $\Gamma_\infty$.  Note that
  $\Delta \cong \Zz$ by finite ramification.  Coarsening by the convex
  subgroup $\Delta$, we get a factorization of the place $K \to
  k_\infty$ into a composition of two places
  \begin{equation*}
    K \stackrel{\Gamma_\infty/\Delta}{\longrightarrow} k
    \stackrel{\Delta}{\longrightarrow} k_\infty,
  \end{equation*}
  where the labels on the arrows show the value groups.  The fact that
  $K \to k_\infty$ is henselian implies that $K \to k$ and $k \to
  k_\infty$ are henselian.  The fact that $v_\infty(p) \in \Delta$
  implies that $K \to k$ is equicharacteristic 0 and $k \to k_\infty$
  is mixed characteristic.  Then $k \to k_\infty$ is a finitely
  ramified henselian valuation with value group $\Delta \cong \Zz$ and
  finite residue field, so $k$ is $p$-adically closed.  Take $\Oo$ to
  be the valuation ring associated to $K \to k$.
\end{claimproof}
If $\Oo$ is the henselian valuation ring from the claim, then $\Oo$
satisfies condition (2) of Fact~\ref{dpm-fields-class}.  Moreover,
$(K,\Oo)$ is a dp-minimal valued field by
\cite[Proposition~5.14]{HaHaStrdep}.  By the classification of
dp-minimal valued fields (Fact~\ref{dpmvr}), the valuation is
defectless and satisfies conditions (1) and (3) of
Fact~\ref{dpm-fields-class}.
\end{proof}
\bibliographystyle{alpha}
\bibliography{biblio}

\end{document}